\PassOptionsToPackage{table,dvipsnames}{xcolor}
\documentclass[10pt,journal,twocolumn,web]{article}
\usepackage{generic}
\usepackage{cite}
\usepackage{amsmath,amssymb,amsfonts,amsthm}
\usepackage{bm,mathtools,enumerate}
\usepackage{algorithmic}
\usepackage{graphicx}
\usepackage{algorithm,algorithmic}
\usepackage{hyperref}
\hypersetup{hidelinks=true}
\usepackage{textcomp}
\usepackage{csquotes}
\usepackage{bbm}
\usepackage{bm}
\usepackage{tikz}
\usetikzlibrary{arrows,calc, fit, backgrounds,decorations.pathreplacing, shapes.arrows, arrows.meta}
\usepackage{pgfplots}
\usepackage{pgfplotstable}
\usepackage{booktabs}
\usepackage{units}
\definecolor{blueUni}{RGB}{0,0,200} 
\definecolor{orangeUni}{RGB}{213,94,0}
\definecolor{greenUni}{RGB}{0,158,115}
\definecolor{redUni}{RGB}{204,121,167}

\newenvironment{alglist}
{\begin{list}{--}{
        \setlength{\leftmargin}{1em}
        \setlength{\labelwidth}{0.6em}
        \setlength{\labelsep}{0.3em}
        \setlength{\itemsep}{0pt}
        \setlength{\topsep}{2pt}
        \setlength{\parsep}{0pt}
        \setlength{\partopsep}{0pt}
}}
{\end{list}}

\newcommand{\N}{\mathbb{N}}
\newcommand{\R}{\mathbb{R}}

\newcommand{\Z}{\mathbb{Z}}

\renewcommand{\mathbf}{\bm}

\newcommand{\p}{\mathbf{p}}
\newcommand{\q}{\mathbf{q}}
\newcommand{\bu}{\mathbf{u}}
\newcommand{\w}{\mathbf{w}}
\newcommand{\x}{\mathbf{x}}

\newcommand{\z}{\mathbf{z}}
\newcommand{\dotz}{\dot{\mathbf{z}}}

\newtheorem{ass}{Assumption}
\newtheorem{cor}{Corollary}
\newtheorem{rem}{Remark}
\newtheorem{lem}{Lemma}

\newtheorem{theorem}{Theorem}
\newtheorem{definition}{Definition}
\newtheorem{proposition}{Proposition}
\newtheorem{example}{Example}
\DeclareMathOperator{\diag}{diag}

\title{%
    Model Predictive Control: Systematic Design Tailored to Nonholonomic Systems\footnote{This work was supported by the Deutsche Forschungsgemeinschaft (DFG, German Research Foundation) under Grant 545246093.}
}
\author{Emilia F. P. Krendelsberger$^{1}$\thanks{Corresponding author, e-mail: emilia.krendelsberger@tu-ilmenau.de}, Jasmin Krauspenhaar$^{2}$,  {L}ea Bold$^{1}$, Mario Rosenfelder$^{2}$,\\
Peter Eberhard$^{2}$, Henrik Ebel$^{3}$, and Karl Worthmann$^{1}$\\
        \small $^{1}$Optimization-based Control Group, Institute of Mathematics, Technische Universität Ilmenau, Germany \\
        \small $^{2}$Institute of Engineering and Computational Mechanics~(ITM), University of Stuttgart, Germany \\
        \small $^{3}$Department of Mechanical Engineering, LUT University, Lappeenranta, Finland\\
}
\date{}
\begin{document}

\maketitle

%

\begin{abstract}
Nonholonomic systems are ubiquitous in applications such as transportation and robotics, yet their control remains challenging.  In order to achieve asymptotic stability of a desired equilibrium, model predictive control (MPC) schemes without stabilizing terminal conditions have recently been proposed for nonholonomic kinematic systems. However, this approach still required lengthy system-specific calculations. This paper eliminates the need for these auxiliary calculations, as we propose a streamlined systematic design for MPC stage costs tailored to nonholonomic systems, utilizing the concepts of privileged coordinates and nonholonomic weights. 
Furthermore, the proposed design methodology is extended beyond purely kinematic models to a wider class of systems with non-negligible dynamics, and we highlight the geometric meaning of the resulting MPC stage costs. The systematic procedure is demonstrated using an articulated vehicle as an example.
\end{abstract}

\begin{keywords}
{Model}Predictive Control, Nonholonomic System, Robotics, Homogeneous Approximation
\end{keywords}

\maketitle

\section{Introduction}
\label{sec:introduction}
Nonholonomic systems arise naturally in many engineering applications and are characterized by non-integrable constraints on their velocities, see, e.g.,~\cite{QinEtAl22,LaumondSekhavatLamiraux05}.
Although such systems are ubiquitous in practice, see, e.g.,~\cite{DeLucaOrioloSamson05,Bloch15}, their control remains challenging. 
In general, linearization does not preserve controllability and violation of Brockett's necessary condition~\cite{Brockett83} rules out the existence of Lipschitz-continuous, time-invariant asymptotically stabilizing state feedback laws for nonholonomic systems~\cite{Coron90,Sontag90}. 
These fundamental challenges have motivated the development of sophisticated setpoint control strategies over the past decades~\cite{DeLucaOrioloSamson05,jean_control_2014}, proposing either time-varying~\cite{MurrSast93,Zuyev16,GrushkovskayaZuyev24,MeraRios24} or discontinuous but time-invariant feedback laws~\cite{Astolfi96,Zeng22}. 
These works include also recent contributions, e.g., to obtain uniform exponential convergence for the prototypical Brockett integrator with a discontinuous state feedback~\cite{Zeng22}, or to obtain bounded controls for finite-time regulation of one nonholonomic system~\cite{MeraRios24}. 

Such methods are commonly tailored to specific nonholonomic systems, and typically require substantial problem-dependent tuning effort. 
Consequently, a general and systematic controller design methodology for nonholonomic systems remains elusive. 

This motivates the use of optimization-based control schemes such as model predictive control~(MPC), where the system dynamics and actuation constraints can be incorporated directly into the controller design. 
Deriving provably stabilizing MPC schemes for nonholonomic systems is, however, far from straightforward.
In MPC, asymptotic stability is often established through stabilizing terminal conditions, 
which are commonly constructed based on the system's linearization~\cite{ChenAllgoewer98}, an approach that does, in general, not work for nonholonomic systems as shown in~\cite{muller2017quadratic}. 
Hence, the construction of terminal conditions is significantly more intricate 
for 
nonholonomic systems.
Whereas stabilizing terminal conditions 
have been developed for nonholonomic systems, see, e.g., \cite{GuHu06,XieFierro08,KuhneLagesDaSilva05}, they are 
tailored to specific systems and usually rely on hand-designed terminal controllers and associated terminal sets. 
Proposed terminal sets are then usually not only hand-crafted per system but also, compared to terminal sets for holonomic systems, quite unusual in the sense that they are commonly nonconvex and may not even contain the setpoint in the interior, encompassing instead states from which specific steering commands are known that steer the system to the setpoint. 
These difficulties motivate the study of MPC schemes without stabilizing terminal ingredients, see, e.g., \cite{grune_nonlinear_2017}, simplifying and robustifying numerical solution as no (additional) state constraints are introduced. 
However, for nonholonomic systems, conventional quadratic stage costs
%
generally do not guarantee asymptotic stability of the resulting MPC closed loop, see~\cite{muller_quadratic_2017,RoseEbel22}. 
For the differential-drive robot, arguably the most prominent wheeled mobile robot, non-quadratic stage costs have been constructed explicitly to establish suitable growth bounds on the value function and, as a consequence, a relaxed Lyapunov inequality to show asymptotic stability of the MPC closed loop~\cite{WorthmannEtAl16,WorthmannEtAl15}.
The works~\cite{RosenfelderEbelEberhard21,RosenfelderEbelEberhard22a} provide a mechanical interpretation of the resulting mixed-exponents cost. 
Control directions directly accessible through the system inputs are penalized less strongly close to the desired steady state than directions that can only be generated 
through Lie brackets.
This insight has subsequently led to the development of tailored mixed-exponents stage costs for MPC-based setpoint control of nonholonomic systems without terminal ingredients~\cite{RoseEbel22,ebel_note_2023}, building upon results for homogeneous control systems~\cite{coron_model_2019}. 
However, the overall procedure requires the construction of a homogeneous approximation of the nonholonomic kinematics and several auxiliary quantities, which is rather 
technical and lengthy.

The present paper eliminates the need for these lengthy intermediate constructions and, as its main contribution, provides a compact and systematic design procedure for model predictive controllers without terminal ingredients for nonholonomic systems.
The resulting MPC scheme ensures local asymptotical stability of the closed-loop system.
To this end, tools from sub-Riemannian geometry are employed to express the system in privileged coordinates, cf.~\cite{bellaiche1996tangent,jean_control_2014}. 
Based on this representation, it is rigorously shown that a homogeneous approximation can always be obtained for the system class with the associated homogeneity parameters coinciding with the so-called nonholonomic weights.
These weights constitute intrinsic geometric characteristics of the nonholonomic system and can be determined in a straightforward manner.
The framework yields tailored mixed-exponents stage costs for general nonholonomic systems that can be constructed solely from a set of privileged coordinates and the corresponding weights.
As a further novelty, we not only consider nonholonomic kinematics described by driftless control-affine systems but also treat kinematic systems with a so-called dynamic extension.
In particular, we extend the ideas of~\cite{ebel_note_2023} from constant to configuration-dependent dynamic extensions and generalize the corresponding MPC design procedure, including stability guarantees, to this broader system~class.

\definecolor{ColITMorange}{RGB}{252, 109, 35}
\definecolor{ColITMViolet}{RGB}{120, 120, 255}%
\definecolor{UniBlueDark}{RGB}{0,81,158}
\definecolor{UniBlueLight}{RGB}{0,190,255}
\definecolor{UniAnthrazit}{RGB}{62, 68, 76}
\colorlet{questionCol}{ColITMViolet}
\colorlet{mainCol}{ColITMorange}

\def\Distx{0.4cm}
\def\Disty{0.15cm}
\def\DistyText{0.9cm}
\def\DistyProblemFormulation{0.6cm}
\def\shiftxArrowProp{1.2cm}
\def\shiftyArrowPropOne{1.15mm}
\def\shiftyArrowPropTwo{6.1mm}
\def\DistArrow{-1.95cm}

    \tikzset{
        TopicProblemFormulation/.style = {fill=white, minimum height=0.6cm, fill opacity=0.2, line width = 0.1mm, draw opacity=1, text opacity=1, draw=black, align=center, text width=8.5cm,  font=\small},
    }

    \tikzset{
        TopicLEFT/.style = {fill=white, fill opacity=1, line width = 0.1mm, draw opacity=1, text opacity=1, draw=black, align=center, text width=3.2cm,  font=\small},
    }

    \tikzset{
        TopicRIGHT/.style = {fill=white, fill opacity=0.2, line width = 0.1mm, draw opacity=1, text opacity=1, draw=black, align=center, text width=4.6cm,  font=\small},
    }

    \tikzset{
        TopicBackground/.style = {fill=cyan, draw=cyan, fill opacity=0.1, rounded corners=2pt, inner sep=2pt},
    }

    \tikzset{
        TopicImage/.style = {line width = 0.1mm, draw opacity=1, draw=white, inner sep = 0pt, outer sep = 0pt, anchor=west},
    }

	\tikzset{
        ArrowsA/.style = {->, line width = 0.2mm},
    }
    \tikzset{
        ArrowsKin/.style = {dashed, line width = 0.2mm, draw = cyan},
    }

\begin{figure}
\centering
    \resizebox{\columnwidth}{!}{%
    \input{figures/graphicalAbstract.tex}
        }
    \caption{Graphical abstract of the main contributions of this paper.}
    \label{fig:overview}
\end{figure}

Figure~\ref{fig:overview} provides a graphical overview of the paper's structure and illustrates the relationships between its main components.
Following this overview, Section~\ref{sec:Problem Formulation} introduces the considered nonholonomic systems 
and establishes the control setup for local continuous-time MPC. 
Then, Section~\ref{sec:Stabilizing Model Predictive Control} presents the systematic controller design procedure and two novel theorems establishing local asymptotic stability of the setpoint under the MPC closed loop using mixed-exponents stage costs based on the nonholonomic weights of the system. 
The proof of stability is provided, drawing on propositions which state that for both considered system classes, there exists a homogeneous approximation. 
The construction of these approximations for both system classes, alongside the proof of the aforementioned propositions, is given in Sections~\ref{sec:Homogeneous Approximation for Kinematic Systems} and \ref{sec:Homogeneous Approximation for Kinematic Systems with Dynamic Extension}, respectively. 
As this article draws from theory from different fields, with different notational conventions, in the interest of self-containedness and accessibility to the reader, we (re)introduce some concepts from sub-Riemannian geometry in Section~\ref{sec:Stabilizing Model Predictive Control}, with notation consistent to the rest of the article.
The example of an articulated vehicle is presented in Section~\ref{sec:example}, illustrating the control-design steps and the effectiveness of the proposed controller in simulation.
We summarize and evaluate the results in Section~\ref{sec:Conclusion}.
\\

\noindent \textit{Notation}: We write $\mathbb{Z}_{a:b}$ with $a,b\in\Z$ for the set $\{k\in\Z:a\leq k\leq b\}$. Vectors and vector-valued functions are generally written in bold type.  
We denote the exponents of monomials with a multi-index $\bm{\nu}$, i.e., $\z^{\bm{\nu}}=\Pi_{i=1}^{n}z_{i}^{\nu_i}$. 
Further, $\bm{0}_n$ and $\bm{1}_{n}$ denote a vector of length $n$ containing only zeros or ones, respectively, and $\bm{0}_{n\times m}$ is the zero matrix of dimension $n\times m$. 
The identity matrix of dimension $n\times n$ is $\bm{I}_n$. We employ the following symbols for spaces of continuous functions, $\mathcal{K}$ for functions $\alpha:\R_{\geq 0}\rightarrow\R_{\geq 0}$, where $\alpha$ is strictly increasing with $\alpha(0)=0$, $\mathcal{K}_{\infty}$ for functions $\alpha\in\mathcal{K}$ where $\lim_{s\rightarrow\infty}\alpha(s)=\infty$, $\mathcal{L}$ for functions $\delta:\R_{\geq 0}\rightarrow\R_{\geq 0}$, where $\delta$ is strictly decreasing with $\lim_{t\rightarrow\infty}\delta(t)=0$, and $\mathcal{KL}$ for functions $\beta:\R_{\geq 0}\times\R_{\geq 0}\rightarrow\R_{\geq 0}$, where $\beta(\cdot,t)\in\mathcal{K}$ for all $t\geq 0,$ and $ \beta(r,\cdot)\in\mathcal{L}$ for all $r\geq 0$.

\section{Problem Formulation}
\label{sec:Problem Formulation}

Our aim in this paper is the stabilization of a desired setpoint for nonholonomic systems via model predictive control (MPC).
In this section, we first introduce 
two specific classes of nonholonomic systems, 
namely driftless kinematic systems and their counterparts with dynamic extension.
Subsequently, we present an MPC setup that is amenable to 
both system classes and provide the problem formulation.

\begin{table}[h] 
\setlength{\tabcolsep}{3pt}
\begin{tabular}{p{30pt}p{90pt}p{105pt}}
        \toprule
        Symbol & Name & Description\\
        \midrule $T_{\q}\R^{n_q}$& tangent space& space of all possible velocities for a system with position $\q\in\R^{n_q}$, cf.~\cite{Bloch15}\\
        \midrule $\mathcal{L}^{b}_{\bm{g}}f$& Lie derivative/\newline nonholonomic derivative&  see Section~\ref{subsec:Driftless Nonholonomic Systems}\\
        \midrule
        $[\bm{f},\bm{g}]$& Lie bracket& see Section~\ref{subsec:Driftless Nonholonomic Systems}\\
        \midrule 
        $\Delta^k(\p)$
        & 
        distributions & see Section~\ref{subsec:Driftless Nonholonomic Systems}\\
        \midrule
        $r(\p)$ 
        & 
        degree of \newline nonholonomity & see Section~\ref{subsec:Driftless Nonholonomic Systems}\\
        \midrule
        $\w(\p)$
        & 
        nonholonomic weights & see Section~\ref{subsec:Systematic Design Procedure for Kinematic Systems}\\
        \midrule
         $\z(\q;\p)$ 
         & 
         privileged \newline coordinates & see Section~\ref{subsec:Systematic Design Procedure for Kinematic Systems}\\
        \midrule
        $\omega(\bm{\nu};\p)$ 
        & 
        weighted degree &  see Definition~\ref{def:weighted_degree} \\
        \midrule
        $\operatorname{ord}_{\p}(f)$
        & 
        nonholonomic order \newline of a function & see Section~\ref{subsec:Systematic Design Procedure for Kinematic Systems} \\
        \midrule
        $\operatorname{ord}_{\p}(\bm{g})$
        & 
        nonholonomic order \newline of a vector field & see Definition~\ref{def:nonhol_ord_vec_fields_and_operators}\\
        \bottomrule
    \end{tabular}
    \caption{Important characteristics for nonholonomic systems} 
    \label{tab:nonholsys}
\end{table}

\subsection{Driftless Nonholonomic Kinematic Systems}
\label{subsec:Driftless Nonholonomic Systems}

We consider the driftless nonholonomic kinematic system
\begin{align}
	\dot{\bm{q}}(t) = \sum_{j=1}^{n_v} \bm{g}_j(\bm{q}(t)) v_j(t) \eqqcolon \bm{G}(\bm{q}(t)) \bm{v}(t), \label{eq:kinematics_nonholonomic_driftless}
\end{align}
where the state~$\bm{q}(t) \in \mathbb{R}^{n_q}$ and the input~$\bm{v}(t)~\in~\mathbb{V}~\subset \mathbb{R}^{n_v}$ denote the position and the velocity of the system at time~$t$, respectively. 
We assume that $n_v < n_q$ holds, meaning that 
the system is underactuated. 
For notational convenience, we suppress the time argument~$t$ in the following.

The control vector fields $\bm{g}_j~:~\mathbb{R}^{n_q}~\to~\mathbb{R}^{n_q}$, $j~\in~\Z_{1:n_v}$, represent the columns of the matrix-valued map~$\bm{G}$. 
For the following definitions, we assume that the control vector fields~$\bm{g}_j$ are sufficiently smooth, before we make our smoothness requirements more precise in Assumption~\ref{ass:kin_sys_setpoint}. 
The distribution~$\Delta^1$ 
associated with the control vector fields~$\bm{g}_j$ is defined 
by $
\bm{q} \mapsto \operatorname{span} \{\bm{g}_1(\bm{q}), \ldots, \bm{g}_{n_v}(\bm{q})\} \subset T_{\bm{q}}\mathbb{R}^{n_q}$ (cf. Table~\ref{tab:nonholsys}) for position~$\q \in \R^{n_q}$.
This captures the set of instantaneously admissible velocities of the system at~$\bm{q}$.
Motion directions not contained in~$\Delta^{1}(\bm{q})$ have to be generated by Lie brackets defined as 
\begin{align*}
	[\bm{g}_i, \, \bm{g}_j] (\bm{q}) \coloneqq \left(\frac{\partial \bm{g}_j}{\partial \bm{q}}\ \bm{g}_i - \frac{\partial \bm{g}_i}{\partial \bm{q}}\ \bm{g}_j\right) (\bm{q}).
\end{align*}
Higher-order Lie brackets correspond to increasingly intricate maneuvers required to steer the system.
We recursively define the distributions
\[\Delta^i\coloneqq\operatorname{span}\{\bm{g}_j,\left[\bm{g}_k, \bm{g}_j \right] \, \vert \, \bm{g}_k\in\Delta^1,\ \bm{g}_j\in\Delta^{i-1} \}\]
for $i\geq 2$. 
The integer
\begin{align*}
  r&=r(\bm{q})\\
  &\coloneqq\min\{i\in\N\,|\,\forall\, \bm{g}_k,\bm{g}_j \in \Delta^i(\bm{q}) : \left[ \bm{g}_k,\bm{g}_j \right] \in \Delta^{i}(\bm{q})\}  
\end{align*} is called the degree of nonholonomy, which serves as an indicator of the system's complexity; $\Delta^{r}(\p)$ is subsequently referred to as the accessibility distribution. 
Finally, the driftless system~\eqref{eq:kinematics_nonholonomic_driftless} is locally controllable at a point~$\p\in\R^{n_q}$, which is an equilibrium of the driftless system, if in a neighborhood of~$\p$ the accessibility distribution spans the state space, i.e., $\text{dim}(\Delta^r(\bm{q}))=n_q$, which is equivalent to the Lie Algebra Rank Condition (LARC), cf.~\cite{coron_control_2009}. If this conditions holds globally, the nonholonomic system~\eqref{eq:kinematics_nonholonomic_driftless} is (globally) controllable  --~a property which is not preserved by the Jacobian linearization, as linear combinations of the control vector fields are insufficient to span the entire state space, since the system is driftless and underactuated.
\subsection{Nonholonomic Kinematic Systems with Dynamic Extension}
\label{subsec:Dynamic Extension of Nonholonomic Kinematics}

Consider kinematic systems~\eqref{eq:kinematics_nonholonomic_driftless} with a dynamic extension 
\begin{align}\label{eq:dynamic_part_dynamic_system}
 \bm{M}(\bm{q}) \dot{\bm{v}} = \bm{B}(\bm{q})\bm{u}
\end{align}
with input~$\bm{u}\in\mathbb{U}\subset\mathbb{R}^{n_u}$ and  matrix~$\bm{B}(\bm{q})\in\mathbb{R}^{n_v \times n_u}$. Moreover, $\bm{M}$ is a symmetric positive-definite matrix-valued function $\bm{M}\!: \mathbb{R}^{n_q} \to \mathbb{R}^{n_v \times n_v}$, i.e., there exists a $\mathcal{K}_{\infty}$-function $\alpha$ such that, for all $\bm{q}\in\R^{n_q}$ and $\bm{v}\in\R^{n_v}$, $\bm{v}^{\top}\bm{M}(\bm{q})\bm{v}\geq \alpha(\lVert \bm{v}\rVert)$ holds.\footnote{Strictly speaking, it will later only be required that~$\bm{M}$ is locally of full rank, but major applications in mechanics give positive definiteness directly. Hence, we focus on that case in the following.}
The overall system then reads
\begin{align}\label{eq:dynamic_extension_system}
     \begin{bmatrix}
        \dot{\bm{q}} \\ \bm{M}(\bm{q}) \dot{\bm{v}}
    \end{bmatrix} 
    = 
    \begin{bmatrix}
        \bm{G} (\bm{q}) \\ \bm{0}_{n_v\times n_v} 
    \end{bmatrix}\bm{v} 
    +
    \begin{bmatrix}
        \bm{0}_{n_q\times n_u} \\ \bm{B}(\bm{q})
    \end{bmatrix}
    \bm{u}
\end{align}
with 
state~$\bm{x}=[\bm{q}^\top \ \bm{v}^\top
]^\top \in \mathbb{R}^{n_x}$, where $n_x=n_q + n_v$. 

For kinematic systems with and without dynamic extension, we impose the following assumptions throughout this paper.
\begin{ass}
\label{ass:kin_sys_setpoint}
    For the underactuated kinematic system~\eqref{eq:kinematics_nonholonomic_driftless} and a setpoint $\p\in\R^{n_q}$, let the following hold:
    \begin{enumerate}[(i)]
        \item The control vector fields $\bm{g}_j$, $j \in \mathbb{Z}_{1:n_v}$, are non-zero at~$\p$ and analytic 
        in a neighborhood~$\mathcal{N}(\p)$ of ~$\p$.\footnote{Note that local analyticity implies that $g_j$ is locally $\mathcal{C}^\infty$ and also implies the existence of $R>0$ such that the Taylor series of $\bm{g}_j$ converges absolutely to $\bm{g}_j(\bm{q})$ for all $\bm{q}$ with \\$\lVert \bm{q}-\p\rVert <R$.}
        \item $\text{dim}(\Delta^r(\bm{p}))=n_q$ (LARC)
        \item The setpoint $\p$ is regular, i.e., the dimensions of the distributions~$\Delta^i$, $i\in\mathbb{Z}_{1:r(\p)}$, are constant on~$\mathcal{N}(\p)$. 
    \end{enumerate}
    If the kinematic system\footnote{Note that, then, the setpoint consists of position \textit{and} velocity such that also the neighbourhood is implicitly adapted.} has a dynamic extension~\eqref{eq:dynamic_part_dynamic_system}, let, in addition, the following hold on~$\mathcal{N}(\p)$: 
    \begin{enumerate}[(i)]
    \setcounter{enumi}{3}
        \item The dynamic extension~\eqref{eq:dynamic_part_dynamic_system} is fully actuated, i.e., $n_u = n_v$ and $\bm{B}(\bm{q})$ is regular for all $\bm{q} \in \mathcal{N}(\bm{p})$. 
        \item The map $\bm{q} \mapsto \bm{M}^{-1}(\bm{q})\bm{B}(\bm{q})$ is analytic. 
    \end{enumerate}
    \end{ass}
If Assumption~\ref{ass:kin_sys_setpoint} holds, the system with dynamic extension~\eqref{eq:dynamic_extension_system} is (locally) controllable, 
see, e.g.,~\cite{Sussmann91, Sicilano09}.

\subsection{Model Predictive Control}
\label{sec:Model Predictive Control Setup}

Next, we consider MPC without stabilizing terminal conditions for the 
system classes presented in Sections~\ref{subsec:Driftless Nonholonomic Systems} and~\ref{subsec:Dynamic Extension of Nonholonomic Kinematics}.
In the following, for ease of notation, we employ the 
nonlinear control system $\dot{\bm{x}} = \bm{f}(\bm{x}(t), \bm{u}(t))$ with state $\bm{x}(t) \in \mathbb{R}^{n_x}$ to cover both nonholonomic kinematic systems with ($n_x=n_q+n_v$) and without ($n_x=n_q$) dynamic extension in a unified manner. 
The set~$\mathbb{U} \subset \R^{n_u}$ of feasible control values is supposed to contain the origin in its interior, while being compact and convex. 
Further, for an initial value $\hat{\x} \in \mathcal{N}(\p)$, the input function~$\bu: [0,T] \rightarrow \R^{n_u}$ is said to be admissible, denoted by $\bu \in \mathcal{U}_T(\hat{\x})$, if $\bm{u}$ is measurable, satisfies the control constraints $\bm{u}(t) \in \mathbb{U}$, $t \in [0,T]$, and yields existence of a unique solution $\x(\cdot;\hat{\x},\bm{u})$ on $[0,T]$.

At the core of the MPC closed loop will be the (continuous-time) optimal control problem~(OCP) parametric in the optimization horizon~$T$, $T>0$, and the initial state $\bm{x} (t) \in \mathcal{N}(\p)$ (at time~$t \geq 0$), where $\p$ stands for the setpoint of Assumption~\ref{ass:kin_sys_setpoint}:
\begin{subequations}\label{eq:MPC_cont}%
    \begin{alignat}{3}%
            & \underset{\bm{u}(t+\cdot \mid t) \in \mathcal{U}_T(\bm{x}(t\,\vert \, t))}{\textnormal{minimize}}
            &&\ \int_{t}^{t+T} \ell\!\left( \bm{x}(\tau\,\vert \, t), \bm{u}(\tau\,\vert \, t) \right)\, \textnormal{d}\tau \label{eq:MPC_cost_cont}\\
            & \quad\:\:\:\text{subject to}
            &&\ \dot{\bm{x}}(\tau \,\vert \, t) = \bm{f}( \bm{x}(\tau\,\vert \, t),  \bm{u}(\tau\,\vert \, t)), \label{eq:MPC_dynamics_cont}\\
            &&&\ \bm{x}(t\,\vert \, t) = \bm{x} (t).\label{eq:MPC_initial_condition_cont}
    \end{alignat}%
\end{subequations}%
The conventional MPC notation is used, where $\bm{x} (\tau\,\vert \, t)$ denotes the state at time $\tau$, $\tau \in [t,t+T]$ predicted at time~$t$ for the admissible input function $\bm{u}(\cdot \mid t)$.
The stage costs~$\ell:\R^{n_x}\times\R^{n_u}\rightarrow \R_{\geq 0}$ are continuous and there exist $\mathcal{K}_{\infty}$-functions $\alpha_1,\alpha_2$ such that
\[
    \alpha_1(\lVert \x\rVert) \leq l^{\star}(\x) := \min_{\bm{u}\in\mathbb{U}} \ell(\x,\bu) \leq \alpha_2(\lVert \x\rVert) \quad\forall\,\x\in\R^{n_x}.
\]
Next, we show that a minimizer of OCP~\eqref{eq:MPC_cont} exist, which will later correspond to initial feasibility in MPC. 
While we show the assertion only for the driftless nonholonomic kinematic system~\eqref{eq:dynamic_part_dynamic_system} of Subsection~\ref{subsec:Driftless Nonholonomic Systems} provided that Assumption~\ref{ass:kin_sys_setpoint} holds, the proof can be extended to systems with dynamic extensions leveraging the controllability imposed in Assumption~\ref{ass:kin_sys_setpoint} if the stage costs and the initial value are chosen such that existence of an admissible control function with finite cost is ensured.\footnote{This holds for the representative mechanical systems considered in the following sections.} 
We only sketch the proof following standard arguments in optimal control, see, e.g., \cite[Theorem 4.6]{Berger22} and~\cite[Proposition 2]{Sakamoto23} for additional details.
\begin{proposition}[Initial feasibility]
    Let the set~$\mathbb{U}$ of feasible control values be convex, compact and contain the origin in its interior.
    Further, let Assumption~\ref{ass:kin_sys_setpoint} hold for system~\eqref{eq:MPC_dynamics_cont} given by the driftless nonholonomic kinematic system~\eqref{eq:kinematics_nonholonomic_driftless}. 
    Then, for each initial state $\hat{\x} \in \mathcal{N}(\p)$ and optimization horizon~$T > 0$, 
    the infimum of the OCP~\eqref{eq:MPC_cont} is attained, i.e., 
    an optimal solution~$\bm{u}^\star = \bm{u}_T^\star(\hat{\x})$ exists.
\end{proposition}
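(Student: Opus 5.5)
We prove the claim with the direct method of the calculus of variations: take a minimizing sequence, extract a weak-$L^2$ (or weak-$\ast$ $L^\infty$) convergent subsequence of controls, show the trajectories converge uniformly, and pass to the limit in the cost using convexity.

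\textbf{Finite infimum and minimizing sequence.} Fix $\hat{\x}\in\mathcal{N}(\p)$ and $T>0$. The zero control $\bm{u}\equiv\bm{0}$ lies in $\mathbb{U}$ and yields the constant solution $\x\equiv\hat{\x}$ of the driftless system~\eqref{eq:kinematics_nonholonomic_driftless}. Hence $\mathcal{U}_T(\hat{\x})\neq\emptyset$, and the infimum $V^\star$ is finite with $0\le V^\star\le T\,\ell(\hat{\x},\bm{0})$. We then choose admissible $\bm{u}_k$ whose costs converge to $V^\star$, with associated solutions $\x_k=\x(\cdot;\hat{\x},\bm{u}_k)$.

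\textbf{Compactness of controls and trajectories.} Since $\mathbb{U}$ is compact, $\|\bm{u}_k\|_\infty\le c$, so after passing to a subsequence $\bm{u}_k\rightharpoonup\bm{u}^\star$ weakly in $L^2([0,T],\R^{n_u})$. Because $\mathbb{U}$ is closed and convex, the set of measurable functions with values in $\mathbb{U}$ is convex and strongly closed, hence weakly closed, so $\bm{u}^\star(t)\in\mathbb{U}$ almost everywhere.

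For the trajectories we need equiboundedness and equicontinuity. Here lies a subtle point: $\bm{G}$ is only assumed analytic on $\mathcal{N}(\p)$, so solutions could a priori leave the region where we control $\bm{G}$. I would handle this as follows.
\begin{itemize}
\item Restrict to a compact neighbourhood $K\subset\mathcal{N}(\p)$ containing $\hat{\x}$ and use the bound $\|\dot{\x}_k\|\le \sup_K\|\bm{G}\|\,c$. This gives uniform Lipschitz continuity as long as the trajectory stays in $K$.
\item If $\bm{G}$ is globally defined and continuous but may grow, use a Gronwall/escape-time argument.
\item Alternatively, read $\mathcal{U}_T$ as implicitly requiring the solution to stay in the domain, which is the standard convention behind the sketch referencing Berger et al.
\end{itemize}
With equiboundedness and equicontinuity in hand, Arzel\`a--Ascoli gives $\x_k\to\x^\star$ uniformly.

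\textbf{Limit trajectory.} We pass to the limit in
\[
\x_k(t)=\hat{\x}+\int_0^t \bm{G}(\x_k(s))\bm{u}_k(s)\,\textnormal{d}s
\]
by splitting the integrand as
\[
\bm{G}(\x_k)\bm{u}_k=\bigl(\bm{G}(\x_k)-\bm{G}(\x^\star)\bigr)\bm{u}_k+\bm{G}(\x^\star)\bm{u}_k .
\]
The first term tends to zero by uniform convergence, continuity of $\bm{G}$ and boundedness of $\bm{u}_k$. The second converges by weak convergence of $\bm{u}_k$ tested against the $L^2$ function $\bm{G}(\x^\star)\mathbbm{1}_{[0,t]}$. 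Linearity in $\bm{u}$, i.e.\ the control-affine structure, is exactly what makes this step work. Hence $\x^\star=\x(\cdot;\hat{\x},\bm{u}^\star)$, and this solution is unique by local Lipschitz continuity of $\bm{G}$. Therefore $\bm{u}^\star\in\mathcal{U}_T(\hat{\x})$.

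\textbf{Lower semicontinuity of the cost (the main obstacle).} We must show
\[
\int_0^T\ell(\x^\star,\bm{u}^\star)\,\textnormal{d}\tau\le\liminf_{k\to\infty}\int_0^T\ell(\x_k,\bm{u}_k)\,\textnormal{d}\tau .
\]
This requires the standing assumption, as in the cited references, that $\ell$ is continuous and convex in $\bm{u}$; the mixed-exponent costs used later have this property. Under that assumption I would invoke the classical weak lower semicontinuity theorem for integral functionals that are convex in the weakly converging argument and continuous in the strongly converging one (Ioffe's theorem, or Tonelli's theorem combined with Mazur's lemma), applied to the pair $\x_k\to\x^\star$ uniformly and $\bm{u}_k\rightharpoonup\bm{u}^\star$. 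This yields that the cost of $\bm{u}^\star$ is at most $V^\star$, so $\bm{u}^\star$ is optimal.
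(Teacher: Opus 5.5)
Your route is the direct-method argument that the paper only sketches: zero control admissible, finite infimum, infimizing sequence, then ``standard arguments'' as in \cite{Berger22}. Two of your steps are correct once the trajectories $\bm{x}_k$ are known to lie in a common compact set on which $\bm{G}$ is locally Lipschitz: the limit-trajectory step, which tests the control-affine term against the weak limit, and the Ioffe-type lower-semicontinuity step.

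That uniform bound is the genuine gap. You list three options but carry out none, and none follows from the stated hypotheses. Assumption~\ref{ass:kin_sys_setpoint} says nothing about $\bm{G}$ outside $\mathcal{N}(\bm{p})$; in the articulated-vehicle example $\bm{G}$ is singular at $q_4=\pm\pi$. The cost bound $J_T(\hat{\bm{x}},\bm{u}_k)\le V^\star+1$ together with $\ell\ge\alpha_1(\lVert\bm{x}-\bm{p}\rVert)$ only controls $\int_0^T\alpha_1(\lVert\bm{x}_k-\bm{p}\rVert)\,\mathrm{d}t$. It does not control $\sup_t\lVert\bm{x}_k(t)-\bm{p}\rVert$ unless the growth of $\bm{G}$ is controlled.

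Each of your three options fails on its own:
\begin{enumerate}[(i)]
\item The first presupposes that $\bm{x}_k$ stays in $K$, which is exactly what must be shown.
\item The second needs a growth condition on $\bm{G}$ that is not among the hypotheses.
\item The third, with the open set $\mathcal{N}(\bm{p})$, gives no equicontinuity, since $\bm{G}$ may be unbounded near $\partial\mathcal{N}(\bm{p})$. Nor does it give admissibility of the limit, since a uniform limit of curves in an open set may reach its boundary.
\end{enumerate}
You must commit to an explicit extra hypothesis. One option is $\bm{G}$ locally Lipschitz on $\R^{n_q}$ with $\lVert\bm{G}(\bm{q})\rVert\le a+b\lVert\bm{q}-\bm{p}\rVert$; Gronwall then yields $\lVert\bm{x}_k(t)-\bm{p}\rVert\le(\lVert\hat{\bm{x}}-\bm{p}\rVert+acT)e^{bcT}$ uniformly in $k$. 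Another is a compact state constraint $K\subset\mathcal{N}(\bm{p})$ built into $\mathcal{U}_T(\hat{\bm{x}})$. The paper's sketch passes over the same point by deferring to \cite{Berger22}.

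Two further remarks. First, requiring $\ell$ to be convex in $\bm{u}$ is not a weakness of your proof. It is a necessary hypothesis that the statement omits; the paper invokes only the assumptions on $\mathbb{U}$, which do not suffice. Take the unicycle with $\mathbb{U}=[-1,1]^2$, $\ell(\bm{q},\bm{v})=\lVert\bm{q}\rVert^2+v_1^2+(1-v_2^2)^2$ (so $\ell^\star(\bm{q})=\lVert\bm{q}\rVert^2$), and $\hat{\bm{x}}=\bm{p}=\bm{0}$. Chattering $v_2=\pm1$ with $v_1=0$ drives the cost to $0$. But zero cost would force $q_3\equiv0$, hence $v_2=0$ a.e., contradicting $\lvert v_2\rvert=1$ a.e., so the infimum is not attained.

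Second, your claim that the mixed-exponents costs~\eqref{eq:stage_cost} are convex in $\bm{v}$ holds only for $d\ge1$. For $d\in(0,1)$, which the paper explicitly allows, $\lvert v_j\rvert^d$ is not convex, so your lower-semicontinuity step does not cover those costs.
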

%
\begin{proof}
W.l.o.g.\ we set $t=0$ and define the cost functional
\begin{align*}
   J_{T}(\hat{\x},\bu)&\coloneqq \int_{0}^{T} \ell\!\left( \bm{x}(\tau\,\vert \, 0), \bm{u}(\tau\,\vert \, 0) \right)\, \textnormal{d}\tau.
\end{align*}
If the solution~$\bm{x}(\cdot\,\vert \, 0)$ does not exist on $[0,T]$ or the 
stage costs are not integrable, we set $J_T(\hat{\x},\bu)=\infty$. 
Since $\bm{u}(t) \equiv 0$ is an admissible control function (for $\hat{\bm{x}} \in \mathcal{N}(\hat{\bm{x}})$),
the set~$\mathcal{U}_T(\hat{\bm{x}})$ of admissible control functions for which additionally $J_T(\hat{\x},\bu)<\infty$ holds, 
is nonempty.
Further, we have $J_T(\hat{\bm{x}},\bm{u}) < \infty$, while the cost functional is bounded from below due to positive definiteness of the stage cost~$\ell$. Hence, 
\[
    J_{T}^{\star}(\hat{\x})\coloneqq\inf_{\bu\in\mathcal{U}_{T}}J_{T}(\hat{\x},\bu) \in [0,\infty)
\] 
holds for the cost functional~$J_T^\star$ such that an infimizing sequence $(\bu_k)_{k \in \N_0} \subset \mathcal{U}_T(\hat{\bm{x}})$ exist. 
Then, using standard arguments, see, e.g., \cite[Theorem 4.6]{Berger22}, invoking the imposed assumptions on the feasible set~$\mathbb{U}$, it can be shown that there exists a control $\bu^\star\in\mathcal{U}_{T}(\hat{\bm{x}})$ such that $J_{T}(\hat{\x},\bu^{\star}) = \min_{\bu \in \mathcal{U}_T(\hat{\bm{u}})} J_{T}(\hat{\x},\bu)$.
\end{proof}

For sampling time~$\delta_t$, $\delta_t \in (0,T)$, we define the MPC feedback law $\mu_{T,\delta_t}: [0,\delta_t) \times \mathcal{N}(\p)\to \mathbb{U}$ by
\[
    \mu_{T,\delta_t}(\tau;\hat{\bm{x}}) := \bm{u}^\star(\tau) = \bm{u}^\star_T(\tau;\hat{\bm{x}}) \qquad\forall\,\tau \in [0,\delta_t),
\]
where the dependency of the optimal control~$\bm{u}^\star$ on the initial state~$\hat{\bm{x}}$ and the optimization horizon~$T$ is shown, while the time~$t$ is omitted since the OCP~\eqref{eq:MPC_cont} does not depend on it (w.l.o.g.\ $t = 0$).
Using the feedback law~$\mu_{T,\delta_t}$ results in the MPC closed loop governed by
\begin{align}\label{eq:MPC_CL}    
    &\dot{\bm{x}}_{\mu_T,\delta_t}(t) \nonumber\\
    &= f(\bm{x}_{\mu_T,\delta_t}(t), \mu_{T,\delta_t}(t - \delta_t \lfloor t/\delta_t \rfloor; \bm{x}_{\mu_T,\delta_t}(\delta_t \lfloor t/\delta_t \rfloor)) 
\end{align}
with $\bm{x}_{\mu_T,\delta_t}(0) = \bm{x}(0)$,
assuming solvability of the OCP~\eqref{eq:MPC_cont} at each sampling time~$t_i = i \delta_t$, $i \in \N_0$.

For the driftless kinematic system~\eqref{eq:kinematics_nonholonomic_driftless}, each state $\bm{q}\in\R^{n_q}$ is an equilibrium ($\bm{u} = 0$). 
If there is a dynamic extension, this extends to states with a velocity of zero, where each state $[\bm{q}^\top\,\bm{0}_{n_v}^{\top}]^{\top},\ \bm{q}\in\R^{n_q},$ is an equilibrium of the system~\eqref{eq:dynamic_extension_system}.
Our goal is to stabilize a desired setpoint~$\p$ using MPC, i.e., we want to ensure that $\p$ is asymptotically stable w.r.t.\ the MPC closed loop. 
To make that statement precise, we conclude this section by defining asymptotic stability of the closed loop, cf.~\cite{khalil_nonlinear_2002}. 
\begin{definition}\label{def:asymptotic_stability}
    The equilibrium~$\bm{p}\in\R^{n_x}$ is said to be locally asymptotically stable w.r.t.\ the MPC closed loop~\eqref{eq:MPC_CL} if there exist $\beta \in \mathcal{KL}$ and $\delta >0$ such that, 
    \begin{equation}
        \left\lVert \bm{x}_{\mu_{T,\delta_t}}(t;\hat{\bm{x}}) - \bm{p} \right\rVert \leq \beta ( \left\lVert \hat{\bm{x}} - \bm{p} \right\rVert,t) \quad  \forall\, 
        \hat{\bm{x}} \in B_{\delta}(\bm{p}),
    \end{equation}
    for all $t \geq 0$ and the MPC closed-loop trajectory $\bm{x}_{\mu_{T,\delta_t}}(\cdot;\hat{\bm{x}})$, where $B_{\delta}(\bm{p})$ is the open ball with radius~$\delta$ centered at~$\bm{p}$.
\end{definition}

\section{Stabilizing Model Predictive Control}\label{sec:Stabilizing Model Predictive Control}

In~\cite{muller_quadratic_2017,RoseEbel22}, it has been shown that model predictive control with quadratic stage costs fails to ensure local asymptotic stability for driftless nonholonomic kinematic systems.
Our goal is to systematically design stage costs leveraging homogeneity such that asymptotic stability is guaranteed for the MPC closed loop 
for nonholonomic kinematic systems with and without dynamic extension.
In this section, we introduce our main results on the MPC of nonholonomic systems, namely a systematic procedure for constructing tailored stage costs solely from the underlying geometry of the system.
The proposed construction relies on two key concepts introduced below, namely nonholonomic weights and privileged coordinates.

Figure~\ref{fig:systematic_design} provides a roadmap for this design procedure.
We consider driftless nonholonomic kinematic systems, introduce the required definitions and notations, and then state the main result on local asymptotic stability of the MPC closed loop. 
\def\Distx{0.6cm}
\def\Disty{-1.8cm}
\definecolor{matchingRed}{RGB}{220,120,120}
\definecolor{UniBlueDark}{RGB}{62, 68, 76}
\definecolor{UniBlueDark}{RGB}{0,81,158}
\definecolor{blueUni}{RGB}{0,0,200} 
\definecolor{orangeUni}{RGB}{213,94,0}
\definecolor{redUni}{RGB}{204,121,167}
\tikzset{
block/.style = {rectangle, minimum width=3.5cm, minimum height=1cm, text centered, draw=black, fill=white, text width=3cm, rounded corners=2pt},
block2/.style = {rectangle, minimum width=6.5cm, minimum height=1cm, text centered, draw=black, fill=white, text width=3cm, rounded corners=2pt},
background/.style = {fill=blueUni!15, draw=blueUni, fill opacity=1, rounded corners=2pt, inner sep=2pt},
background2/.style = {fill=greenUni!50, draw=greenUni, fill opacity=0.2, rounded corners=2pt, inner sep=2pt},
arrow/.style = {->, line width = 0.2mm, thick}
}
\begin{figure*}
    \centering
    \noindent\makebox[\linewidth][l]{%
        \resizebox{\linewidth}{!}{%
            \begin{tikzpicture}[>=triangle 45]
\node (system_class) [block2, text width= 6cm]
{\textcolor{blueUni}{kinematic system} $\dot{\bm{q}}=\sum_{j=1}^{n_v}\bm{g}_j(\bm{q})v_j$\\
{\small
        \begin{list}{--}{
        \setlength{\leftmargin}{1em}
        \setlength{\labelwidth}{0.6em}
        \setlength{\labelsep}{0.3em}
        \setlength{\itemsep}{0pt}
        \setlength{\topsep}{0pt}
        \setlength{\parsep}{0pt}
        }
        \item state $\bm{q}\in\R^{n_q}$
        \item input $\bm{v}\in\R^{n_v}$
        \item control vector fields $\bm{g}_j:\R^{n_q}\rightarrow\R^{n_q}$
        \item setpoint $\p\in\R^{n_q}$
        \item Assumption~\ref{ass:kin_sys_setpoint} (i)-(iii) hold
        \end{list}
    }
    };
\node (dyn_ext) [block2, text width = 6cm, below of=system_class, yshift=\Disty]
{\textcolor{greenUni}{dynamic extension}
$\bm{M}(\bm{q})\dot{\bm{v}}=\bm{B}(\bm{q})\bm{u}$\\
{\small
        \begin{list}{--}{
        \setlength{\leftmargin}{1em}
        \setlength{\labelwidth}{0.6em}
        \setlength{\labelsep}{0.3em}
        \setlength{\itemsep}{0pt}
        \setlength{\topsep}{0pt}
        \setlength{\parsep}{0pt}
        }
        \item input $\bm{u}\in\R^{n_v}$
        \item $\bm{M}=\bm{M}^\top$
        \item $\bm{M},\bm{B}\in\R^{n_v \times n_v}$ regular
        \item Assumption~\ref{ass:kin_sys_setpoint} (iv),(v) hold
        \end{list}
    }
    };
\node (weights) [block, anchor = west, xshift=\Distx] at (system_class.east)
{compute the nonholonomic weights\\
$\bm{w}=\w(\p)\in\R^{n_q}$};
\node[anchor=south] at (weights.north)
    {\small{\textit{Step 1}}};
\node (priv) [block, xshift=\Distx, anchor = west] at (weights.east)
{compute the privileged coordinates $\z=\z(\bm{q};\bm{p})\in\R^{n_q}$};
\node[anchor=south] at (priv.north)
    {\small{\textit{Step 2}}};
\node (stage_cost_1) [block, text width = 5cm, anchor = west, xshift=\Distx] at (priv.east)
{set the stage costs 
    \begin{equation*}
        \ell(\bm{q},\bm{v}) =
        \sum_{i=1}^{n_q} \lvert z_i \rvert^{\frac{d}{w_i}}+
        \sum_{j=1}^{n_v} \lvert v_j\rvert^{d},        
    \end{equation*}
    $d=\prod_{i=1}^{n_q}w_i$
    };
 \node(step3)[anchor=south] at (stage_cost_1.north)
    {\small{\textit{Step 3}}};
\node (stage_cost_2) [block, below of=stage_cost_1, yshift=\Disty]
{add $\sum_{j=1}^{n_v}\lvert u_j\rvert^{d}$ to the stage costs};
\node (MPC) [block, anchor = west, xshift=\Distx] at (stage_cost_1.east)
{stable MPC scheme with OCP~\eqref{eq:MPC_cont}};
\draw[arrow] (system_class)--(weights);
\draw[arrow] (weights)--(priv);
\draw[arrow] (priv)--(stage_cost_1);
\draw[arrow] (stage_cost_1)--(MPC);
\draw[arrow] (dyn_ext)--(stage_cost_2);
\draw[arrow] (stage_cost_2)--(stage_cost_1);
\coordinate (stage_cost_1_top) at ($(stage_cost_1.north)+(0,4mm)$);
\coordinate (MPC_east) at ($(MPC.east)+(0mm,0)$);
\coordinate (system_class_west) at ($(system_class.west)+(0mm,0)$);
\coordinate (top) at ($(stage_cost_1_top.south)+(0,1mm)$);
\coordinate (left)  at ($(system_class.west)+(-2mm,0)$);
\coordinate (right)  at ($(MPC.east)+(2mm,0)$);
\begin{scope}[on background layer]
    \node[
        background2,
        fit={
            (system_class)
            (dyn_ext)
            (weights)
            (priv)
            (stage_cost_1)
            (stage_cost_2)
            (top)
            (left)
            (right)
            (MPC)
        },
        inner xsep=0.25cm,
        inner ysep=0.25cm
    ] (dyn_background) {};
\end{scope}
\begin{scope}[on background layer]
   \node[background, fit=(system_class)(weights)(priv)(stage_cost_1)(stage_cost_1_top)(MPC), inner sep=0.15cm
   ] (kin_background) {};
   \end{scope}
%
%
\coordinate (CurvedArrowCenter)
    at ($(dyn_ext.east)+(0mm,12mm)$);
\draw[
    black,
    line width=0.2mm,
    thick,
    -{Triangle[
        length=3mm,
        width=2.2mm
    ]},
    shorten >=-2mm
]
($(CurvedArrowCenter)+(270:4mm)$)
arc[
    start angle=270,
    end angle=450,
    radius=4mm
];
\node[anchor=west] at ($(CurvedArrowCenter.east)+(3.5mm,-2.5mm)$)
    {\normalsize{treat kinematic subsystem}};

\pgfresetboundingbox
\useasboundingbox
    (dyn_background.south west)
    rectangle
    (dyn_background.north east);
\end{tikzpicture}%
        }%
    }
    \caption{Schematic workflow of the systematic design procedure for the kinematic system with optional dynamic extension, leading to a stabilizing MPC scheme. For Step 3, the parameter $d$ is chosen as in our simulation in Section~\ref{sec:example}, but any positive real number may be chosen, and weighting coefficients may also be used for tuning, cf.~\eqref{eq:stage_cost}.}
    \label{fig:systematic_design}
\end{figure*}

\subsection{Systematic Design Procedure for Kinematic Systems}\label{subsec:Systematic Design Procedure for Kinematic Systems}
In the following, we delineate the systematic design procedure for kinematic systems of the form~\eqref{eq:kinematics_nonholonomic_driftless} under Assumption~\ref{ass:kin_sys_setpoint}.
The procedure consists of three main steps, as illustrated in Fig.~\ref{fig:systematic_design}.

\noindent\textbf{Step 1: Nonholonomic weights}.
We determine parameters that characterize the nonholonomic controllability structure of the system, which we refer to as the nonholonomic weights~$\bm{w}(\p)\in\N^{n_q}$ of the system for the setpoint $\p\in\R^{n_q}$.
More precisely, weight $w_i=w_i(\p)$ is defined as
\begin{align}\label{eq:weights}
    w_i \coloneqq \min \{ s\in \Z_{1:r} \, \vert \, \dim(\Delta^{s}(\p))\geq i \},
\end{align}
i.e., weight~$w_i$ is the smallest integer~$s$ such that the distribution~$\Delta^s (\p)$ is at least of dimension~$i$~\cite{jean_control_2014}. 

\noindent\textbf{Step 2: Privileged coordinates}. In order to define the privileged coordinates, we first state some preliminary definitions.

For a locally smooth function~$f\! :\R^{n_q}\rightarrow\R$, its Lie derivative along a vector field~$\bm{g}:\R^{n_q}\rightarrow\R^{n_q}$ is defined as~$\mathcal{L}_{\bm{g}} f~\coloneqq~\frac{\partial f}{\partial \bm{q}} \bm{g}$. For $b \in \mathbb{N}_0$, repeated Lie derivatives along the vector field~$\bm{g}$ are denoted by
\begin{align*}
    \mathcal{L}_{\bm{g}}^{0} f &\coloneqq f,\\ 
    \mathcal{L}_{\bm{g}}^{b} f
    &\coloneqq
    \mathcal{L}_{\bm{g}}
    \bigl(
        \mathcal{L}_{\bm{g}}^{b-1} f
    \bigr),
    \quad b \geq 1.
\end{align*}

For systems of the form~\eqref{eq:kinematics_nonholonomic_driftless}, the Lie derivative $\mathcal{L}_{\bm{g}_j} f$, along a control vector field~$\bm{g}_j$, $j\in\mathbb{Z}_{1:n_v}$, is referred to as the first-order nonholonomic derivative of~$f$~\cite[p.~14]{jean_control_2014}.
For two control vector fields~$\bm{g}_i, \, \bm{g}_j$ of the system, this concept can be extended to second-order nonholonomic derivatives, e.g.,\[\mathcal{L}_{\bm{g}_i}(\mathcal{L}_{\bm{g}_j} f) = \frac{\partial (\mathcal{L}_{\bm{g}_j }f)}{\partial \bm{q}} \bm{g}_i,\] and higher orders follow analogously.
Then, for the given system, the nonholonomic order $\operatorname{ord}_{\p}(f)$ of the function~$f$ at point $\p\in\mathbb{R}^{n_q}$ is defined as either the largest integer~$k\in\N$ for which all nonholonomic derivatives of~$f$ that are of order smaller than $k$ vanish at~$\p$, or it is infinite if all nonholonomic derivatives of~$f$ vanish at~$\p$. 
Assumption~\ref{ass:kin_sys_setpoint} implies the existence of a system of local coordinates~$\bm{z}=\z(\q;\p)\in\mathbb{R}^{n_q}$ whose coordinate functions~$z_i$ 
satisfy $\operatorname{ord}_{\p}(z_i)=w_i$ for all $i\in\Z_{1:n_q}$ \cite{jean_control_2014}. 
Such coordinates~$\bm{z}$ are called privileged coordinates.
Privileged coordinates can, for instance, be constructed by \emph{Bella\"iche's algorithm}, as presented in~\cite[p. 23]{jean_control_2014}, which has also been employed in~\cite{RoseEbel22}.

We present Bella\"iche's algorithm as Algorithm~\ref{alg:AlgorithmBellaiche} in Appendix~\ref{subsec:BA}, in the notation adopted throughout this paper.
Note that, for many mobile robotic systems, lines~\ref{algStep:forLoopStart}--\ref{algStep:forLoopEnd} simplify considerably, since all required polynomial functions $h_{j,k}$ vanish.

\noindent\textbf{Step 3: Stage cost}. Considering the control system $\dot{\q}=\bm{G}(\bm{q})\bm{v}=:\bm{f}(\q,\bm{v})$, we set the stage costs for the OCP~\eqref{eq:MPC_cont} in terms of privileged coordinates,
\begin{align}
\label{eq:stage_cost}
        \ell(\bm{q},\bm{v}) & = \sum_{i=1}^{n_q} Q_{i} \lvert z_i(\bm{q};\bm{p}) \rvert^{\frac{d}{w_i}}+\sum_{j=1}^{n_v} R_{j} \lvert v_j\rvert^{d} 
\end{align}
with tuning
parameter $d\in(0,\infty)$ and weighting coefficients $Q_i,R_j>0$, $i\in\Z_{1:n_q}$, $j\in\Z_{1:n_v}$. Common choices for the tuning parameter $d$ 
are $d=\prod_{i=1}^{n_q}w_i$ (see~Section~\ref{sec:example}) or $d=2\prod_{i=1}^{n_q}w_i$~\cite{ebel_note_2023}, see also~\cite[Sec.~4.3]{Rosenfelder26}.

This completes the construction of the MPC stage costs for driftless nonholonomic kinematic systems. 
It remains to be shown that the resulting MPC closed loop locally asymptotically stabilizes the desired setpoint.

\subsection{Stability of MPC for Kinematic Systems}\label{subsec:KinMPCStab}
The following theorem establishes the stability of the MPC scheme for the kinematic system with stage costs~\eqref{eq:stage_cost}.
\begin{theorem}
\label{th:MPC}
    Consider a driftless control-affine system with kinematics~\eqref{eq:kinematics_nonholonomic_driftless}, setpoint $\p\in\R^{n_q}$, and let Assumption~\ref{ass:kin_sys_setpoint} hold. Further, let $\bm{w} = \bm{w}(\bm{p})\in \mathbb{N}^{n_q}$ be the corresponding nonholonomic weights, and $\bm{z}(\bm{q};\bm{p})$ denote a system of privileged coordinates at~$\bm{p}$.
    Consider the MPC controller with the OCP~\eqref{eq:MPC_cont} and control system $\dot{\q}=\bm{f}(\q,\bm{v})$. 
    Moreover, let the stage costs be given by~\eqref{eq:stage_cost}. 
    Then, for every sampling time~$\delta_t > 0$, there exists a sufficiently large prediction horizon~$T>0$ such that $\bm{p}$ is locally asymptotically stable  in the sense of Definition~\ref{def:asymptotic_stability} for the loop closed by the MPC controller.
\end{theorem}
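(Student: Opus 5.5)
The proof will reduce Theorem~\ref{th:MPC} to the known stability result for MPC without terminal conditions on homogeneous systems from~\cite{coron_model_2019}, as adapted to nonholonomic kinematics in~\cite{RoseEbel22,ebel_note_2023}. That result states the following. Suppose the system, written in suitable local coordinates, admits a homogeneous approximation with dilation weights $\bm{r}$ that is locally controllable, and suppose the stage cost is homogeneous of some positive degree with respect to the same dilation and positive definite. Then the optimal value function satisfies a growth bound $V_T(\hat{\x}) \le \gamma\, \ell^\star(\hat{\x})$ with $\gamma$ independent of $T$. With this bound, the standard relaxed Lyapunov argument for MPC without terminal ingredients (cf.~\cite{grune_nonlinear_2017}) yields, for every $\delta_t>0$ and all sufficiently large $T$, a relaxed Lyapunov inequality $V_T(\x(\delta_t)) \le V_T(\x(0)) - \alpha \int_0^{\delta_t}\ell\,\mathrm{d}\tau$ with $\alpha\in(0,1]$. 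Asymptotic stability in the sense of Definition~\ref{def:asymptotic_stability} then follows from the $\mathcal{K}_\infty$ sandwich bounds on $\ell^\star$ and $V_T$. My plan is therefore to verify the hypotheses of that result for the privileged-coordinate representation.

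\textbf{Step 1.} Change coordinates $\q\mapsto \z(\q;\p)$. This is a local analytic diffeomorphism with $\z(\p;\p)=\bm 0$, so the system becomes $\dot{\z}=\sum_j \tilde{\bm g}_j(\z)v_j$. Asymptotic stability of $\p$ is equivalent to asymptotic stability of $\bm 0$ in $\z$, up to the $\mathcal{K}$-equivalence of norms under a local diffeomorphism.

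\textbf{Step 2.} This is the main obstacle. I will show that the dilation $\Lambda_\varepsilon(\z)=(\varepsilon^{w_1}z_1,\dots,\varepsilon^{w_{n_q}}z_{n_q})$, with $v\mapsto \varepsilon v$ (i.e.\ each input weight equal to $1$), yields a homogeneous approximation. I will use the fact that in privileged coordinates every $\tilde{\bm g}_j$ has nonholonomic order $\ge -1$. By analyticity (Assumption~\ref{ass:kin_sys_setpoint}(i)), I will Taylor-expand the $i$-th component of $\tilde{\bm g}_j$ into monomials $\z^{\bm\nu}$ and argue that every monomial with weighted degree $\omega(\bm\nu)<w_i-1$ vanishes. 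The argument: such a monomial would give $\operatorname{ord}_{\p}(z_i)<w_i$, contradicting $\operatorname{ord}_{\p}(z_i)=w_i$. Collecting the terms with weighted degree exactly $w_i-1$ gives the homogeneous part $\hat{\bm g}_j$, of degree $-1$, and the remainder has order $\ge 0$. Regularity (iii) and the LARC (ii) together with the standard theory in~\cite{bellaiche1996tangent,jean_control_2014} give that the approximating system $\dot{\z}=\sum_j\hat{\bm g}_j v_j$ satisfies the LARC at $\bm 0$ and is hence controllable. This is exactly the content the paper defers to the proposition on homogeneous approximation for kinematic systems, so I would state it as a proposition and prove it by the order-counting argument above.

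\textbf{Step 3.} Check homogeneity of the stage cost: $\ell(\Lambda_\varepsilon\z,\varepsilon v)=\varepsilon^{d}\ell(\z,v)$ holds because $|\varepsilon^{w_i}z_i|^{d/w_i}=\varepsilon^d|z_i|^{d/w_i}$ and $|\varepsilon v_j|^d=\varepsilon^d|v_j|^d$. Moreover $\ell^\star(\z)=\sum_i Q_i|z_i|^{d/w_i}$, which is continuous and positive definite and bounded between $\mathcal{K}_\infty$ functions of $\|\z\|$. The remaining hypotheses are compactness and convexity of $\mathbb U$ with $0$ in its interior, which lets small states be steered with small inputs, and existence of minimizers, which is the initial feasibility proposition.

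\textbf{Step 4.} Invoke~\cite{coron_model_2019}, via the form used in~\cite{ebel_note_2023}, to get the $T$-uniform growth bound locally near $\bm 0$. Then apply the relaxed Lyapunov inequality for sufficiently large $T$, together with recursive feasibility, which holds because sublevel sets of $V_T$ stay in the neighborhood. Transforming back to $\q$ concludes the proof.
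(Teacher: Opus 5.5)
Your proposal is correct and follows essentially the same route as the paper: pass to privileged coordinates, truncate the Taylor expansion of each row at weighted degree $w_i-1$ to get a $(\bm{w},\bm{1}_{n_v},0)$-homogeneous nilpotent approximation satisfying the LARC, invoke \cite[Th.~4.4, Rem.~4.7]{coron_model_2019}, and transfer back via the local diffeomorphism $\bm{q}\mapsto\bm{z}(\bm{q};\bm{p})$. The one ingredient you leave implicit is the quantitative residual bound $|R_i(\bm{\Lambda}_\alpha\bm{z},\alpha\bm{v})|\le M\alpha^{w_i+1}$ required by Definition~\ref{def:homogeneous_approximation}, which the paper obtains from analyticity by uniformly bounding the tail $\sum_{k\ge 0}\alpha^k Z^{[k]}_{i,j}(\bm{z})v_j$ on a small ball; your inequality-only claim (no monomials of weighted degree below $w_i-1$) is exactly what that step needs, and it is more accurate than the equality asserted in Lemma~\ref{lem:ord_vec_fields_equal}, which fails e.g.\ for $Z_{3,2}$ in the articulated-vehicle example.
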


In order to 
prove this theorem, we first establish definitions for homogeneous systems as well as homogeneous approximations.
\begin{definition}[Homogeneous control system \cite{Gruene06}]\label{def:homogeneousSystem}
A system of the form
\begin{align*}
    \dot{\bm{x}}(t)&= \bm{f}(\x(t),\bu(t))
\end{align*}
is a homogeneous system with homogeneity parameters $\bm{r}=[r_1\ \cdots\ r_{n_x}]\in(0,+\infty)^{n_x},\ \bm{s}=[s_1\ \cdots\ s_{n_u}]\in(0,+\infty)^{n_u}$, and $\tau\in(-\min_{i}r_i,+\infty)$ if
\begin{align}
\label{eq:homogeneity_condition_definition}    \bm{f}(\Lambda_\alpha\bm{x},\Delta_{\alpha}\bu)&=\alpha^{\tau}\Lambda_{\alpha} \bm{f}(\x,\bu)
\end{align}
holds for all $(\bm{x},\bm{u})\in\R^{n_x}\times\R^{n_u}$ and all $\alpha\in\R_{\geq 0}$, with 
\begin{align*}
    \bm{\Lambda}_\alpha&=\operatorname{diag}(\alpha^{r_1},\ldots,\alpha^{r_{n_x}})\in\R^{n_x\times n_x},\\
    \bm{\Delta}_\alpha&=\operatorname{diag}(\alpha^{s_1},\ldots,\alpha^{s_{n_u}})\in\R^{{n_u}\times {n_u}}.
\end{align*}
We denote such a system as $(\bm{r},\bm{s},\tau)$-homogeneous.
\end{definition}
\begin{definition}[Homogeneous approximation~\cite{coron_model_2019}]
\label{def:homogeneous_approximation}
Let $\bm{h}(\x,\bu)$ describe the kinematics of a $(\bm{r},\bm{s},\tau)$-homogeneous system according to Definition~\ref{def:homogeneousSystem}.
Then, $\bm{h}$ is said to be a homogeneous approximation of $\bm{f}(\x,\bu)$ near the origin if the relation
\begin{align}
\label{eq:homogeneousApproximationRelation}
\bm{f}(\bm{x},\bm{u}) = \bm{h}(\bm{x},\bm{u}) + \bm{R}(\bm{x},\bm{u})
\end{align}
holds with
\begin{align}
\label{eq:homogeneousApproximationResiduum}
|R_{i}(\bm{\Lambda}_{\alpha}\bm{x},\bm{\Delta}_{\alpha}\bm{u})| \leq M \alpha^{r_{i}+\tau+\eta} \quad \forall\,i \in \mathbb{Z}_{1:n_x}
\end{align}
for all $\x,\bu$ with $\left\lVert \bm{x}\right\rVert \leq \rho, \left\lVert \bm{u}\right\rVert \leq \rho$ and all $\alpha \in (0,1]$ with $\rho,M,\eta >0$.
\end{definition}
Regarding a general nonlinear control system $\dot{\bm{x}} = \bm{f}(\bm{x}, \bm{u})$, as presented in Sec.~\ref{sec:Problem Formulation},  we assume that a $(\bm{r},\bm{s},0)$-homogeneous approximation (cf. Definition~\ref{def:homogeneous_approximation}) exists, which is locally asymptotically null controllable to the origin, cf. \cite[Def.~3.2]{coron_model_2019}. 
Then, the origin is stabilized for the MPC-closed-loop with the OCP~\eqref{eq:MPC_cont} (cf. Definition~\ref{def:asymptotic_stability}) with stage costs~\eqref{eq:stage_cost}. Further, for the stage costs, the tuning parameter $d\in(0,\infty)$ as well as the positive weighting coefficients can be chosen arbitrarily.~\cite[Th.~4.4, Rem.~4.7]{coron_model_2019}.
Since we exclusively consider driftless systems, we simply shift the desired setpoint to the {origin}. 


\begin{proposition}
\label{prop:homogeneous_approx_kinematic}
    For the kinematic system~\eqref{eq:kinematics_nonholonomic_driftless} under Assumption~\ref{ass:kin_sys_setpoint}, expressed in privileged coordinates, there exists a $(\bm{w},\bm{1}_{n_v},0)$-homogeneous system $\dot{\z}=\bar{\bm{f}}(\z,\bm{v})$, which is a homogeneous approximation of the kinematic system near the origin. Furthermore, this homogeneous approximation is locally null controllable to the origin.
\end{proposition}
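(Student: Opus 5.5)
We work in privileged coordinates $\z=\z(\q;\p)$, in which the setpoint sits at $\z=\bm{0}$. The kinematics become $\dot{\z}=\sum_{j=1}^{n_v}\tilde{\bm{g}}_j(\z)v_j$ with $\tilde{\bm{g}}_j=\frac{\partial \z}{\partial \q}\bm{g}_j$. By Assumption~\ref{ass:kin_sys_setpoint}(i) and the analyticity of the coordinate change produced by Bella\"iche's algorithm, each $\tilde{\bm{g}}_j$ is analytic near the origin. Its components $\tilde g_{j,i}$ therefore admit absolutely convergent Taylor expansions $\tilde g_{j,i}(\z)=\sum_{\bm{\nu}}c^{j,i}_{\bm{\nu}}\z^{\bm{\nu}}$.

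Let $\omega(\bm{\nu})=\sum_k w_k\nu_k$ denote the weighted degree. The key structural fact about privileged coordinates (cf.~\cite{jean_control_2014,bellaiche1996tangent}) is that each $\tilde{\bm{g}}_j$ has nonholonomic order $\geq -1$. Concretely, $\tilde g_{j,i}$ has nonholonomic order at least $w_i-1$, so only monomials with $\omega(\bm{\nu})\geq w_i-1$ occur in its expansion. I would justify this by writing $\mathcal{L}_{\tilde{\bm g}_j}z_i=\tilde g_{j,i}$ and noting that any nonholonomic derivative of $\tilde g_{j,i}$ of order $k$ is a nonholonomic derivative of $z_i$ of order $k+1$. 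Since $\operatorname{ord}_{\p}(z_i)=w_i$, all such derivatives vanish at $\p$ for $k+1<w_i$, which gives $\operatorname{ord}_{\p}(\tilde g_{j,i})\geq w_i-1$. It then remains to translate nonholonomic order into weighted Taylor degree in privileged coordinates: a function has nonholonomic order $\geq s$ if and only if its expansion contains only monomials of weighted degree $\geq s$. This equivalence is a standard result for privileged coordinates, which I would cite from~\cite{jean_control_2014}.

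With this in hand, I define $\bar{f}_i(\z,\bm{v})=\sum_j v_j\sum_{\omega(\bm{\nu})=w_i-1}c^{j,i}_{\bm{\nu}}\z^{\bm{\nu}}$, which keeps the lowest-order part. Checking Definition~\ref{def:homogeneousSystem} with $\bm{\Lambda}_\alpha=\diag(\alpha^{w_i})$, $\bm{\Delta}_\alpha=\alpha\bm{I}$ and $\tau=0$: each retained term scales as $\alpha\cdot\alpha^{w_i-1}=\alpha^{w_i}$, which is exactly the $i$-th entry of $\bm{\Lambda}_\alpha\bar{\bm{f}}$. For the remainder, each remaining term has $\omega(\bm{\nu})\geq w_i$ and scales by at least $\alpha^{w_i+1}$ when $\alpha\in(0,1]$. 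Using absolute convergence on a ball, with $\rho$ chosen smaller than the convergence radius adjusted for the weights, I obtain $|R_i(\bm{\Lambda}_\alpha\z,\bm{\Delta}_\alpha\bm{v})|\leq M\alpha^{w_i+1}$, which is the bound in Definition~\ref{def:homogeneous_approximation} with $\eta=1$. Here $M$ is bounded by $\rho\sum_j\sum_{\bm{\nu}}|c^{j,i}_{\bm{\nu}}|\rho^{|\bm{\nu}|}<\infty$.

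For local null controllability, the approximating fields $\bar{\bm{g}}_j$ are homogeneous of degree $-1$, so brackets of length $k$ are homogeneous of degree $-k$. Moreover, the value at $\bm{0}$ of a length-$k$ bracket of the $\tilde{\bm{g}}_j$ coincides, in the components with $w_i=k$, with the corresponding bracket of the $\bar{\bm{g}}_j$, because higher-order terms contribute nothing at the origin. By the definition~\eqref{eq:weights} of the weights together with LARC (Assumption~\ref{ass:kin_sys_setpoint}(ii)) and regularity (Assumption~\ref{ass:kin_sys_setpoint}(iii)), the brackets of the $\bar{\bm{g}}_j$ at $\bm{0}$ therefore span $\R^{n_q}$. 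Homogeneity propagates this rank condition from the origin to a neighbourhood. Chow--Rashevskii for the driftless system then gives local controllability, and hence null controllability with small inputs within $\mathbb{U}$.

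I expect the main obstacle to be the rigorous equivalence between nonholonomic order and weighted Taylor degree in privileged coordinates, together with the bracket-matching argument at the origin. Both are classical in~\cite{bellaiche1996tangent,jean_control_2014}, and I would import them rather than reprove them.
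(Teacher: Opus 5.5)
Your proposal is correct and follows essentially the same route as the paper: privileged coordinates, Taylor monomials sorted by weighted degree, truncation to the terms with $\omega(\bm{\nu})=w_i-1$, and a direct check of $(\bm{w},\bm{1}_{n_v},0)$-homogeneity by monomial scaling. It then bounds the remainder by $M\alpha^{w_i+1}$ (so $\eta=1$) using absolute convergence, and gets null controllability from the bracket-generating property of the nilpotent approximation, which the paper cites from~\cite[Lemma~2.1]{jean_control_2014} and you sketch. You only need the inequality $\operatorname{ord}_{\p}(\tilde g_{j,i})\geq w_i-1$, and you derive it correctly from $\mathcal{L}_{\tilde{\bm g}_j}z_i=\tilde g_{j,i}$; the paper's Lemma~\ref{lem:ord_vec_fields_equal} instead asserts equality, which fails in general, e.g.\ $Z_{3,2}=1-2\widetilde{a}=-\tfrac14 z_2^2+\cdots$ in the articulated-vehicle example has order $2\neq w_3-1$.
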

\begin{proof}
    See end of Section~\ref{sec:Homogeneous Approximation for Kinematic Systems}.
\end{proof}
\begin{proof}[Proof of Theorem~\ref{th:MPC}]
    We consider both the system and its homogeneous approximation (according to Proposition~\ref{prop:homogeneous_approx_kinematic}) in privileged coordinates.
    It follows from Proposition~\ref{prop:homogeneous_approx_kinematic} and \cite[Th.~4.4, Rem.~4.7]{coron_model_2019} that the origin (in privileged coordinates) is locally asymptotically stable for the MPC closed-loop with OCP~\eqref{eq:MPC_cont} 
    for stage costs~\eqref{eq:stage_cost}.
    
    Since the coordinate transformation into privileged coordinates is a (local) diffeomorphism~\cite{jean_control_2014}, the stabilization of the setpoint in $\z$-coordinates implies the stabilization in the original coordinates.
\end{proof}
The key component of the tailored MPC controller is the mixed-exponents stage cost~\eqref{eq:stage_cost} whose structure reflects the underlying distance measure induced by the nonholonomic geometry of the system.
\begin{rem}
\label{rem:sub_Riemannian_distance}
We regard the kinematic system~\eqref{eq:kinematics_nonholonomic_driftless}, under the assumption that $\operatorname{rank}(\bm{G}(\bm{q}))=n_v$ for all $\bm{q}\in\R^{n_q}$, and where the velocity $\bm{v}$ acts as the control input. Then, the sub-Riemannian distance  between two points $\bm{q}_1,\bm{q}_2\in\R^{n_q}$ induced by this system is defined as
\begin{equation}
d_{\textnormal{R}}(\bm{q}_1,\bm{q}_2) \coloneqq \inf_{T>0} \inf_{\bm{v}(\cdot)} \int_0^T \|\bm{v}(t)\|\, \mathrm{d}t,
\end{equation}
where the infimum is taken over all $T>0$ and all controls $\bm{v}(\cdot)\in L^{1}([0,T],\R^{m})$ steering the system~\eqref{eq:kinematics_nonholonomic_driftless} from $\bm{q}_1$ to $\bm{q}_2$~\cite{jean_control_2014}.

For $d \geq 1$, the $d$-th root of the state-dependent part of the MPC stage costs~\eqref{eq:stage_cost}, with weighting coefficients equal to $1$, is locally equivalent to the sub-Riemannian distance, i.e.,
\begin{align*}
\left( \sum_{i=1}^{n_q}\lvert z_i(\bm{q};\p) \rvert^{\frac{d}{w_i}} \right) ^{\frac{1}{d}} \sim d_{\textnormal{R}}(\bm{q},\p).
\end{align*}
This local equivalence is justified in App.~\ref{app:equivalence_subRiem_cost}.
For $d=1$, this term is equivalent to the so-called pseudo-norm of the privileged coordinates at $\p$, as defined in~\cite[p.~21]{jean_control_2014}, see also~\cite[p.~171]{Polyakov2025}. 
\end{rem}
\subsection{Dynamic Extension: MPC Design and Stability}\label{subsec:DynExtMPCStab}

We now go beyond the driftless case and cover nonholonomic systems with a dynamic extension, where we can state an analogous stability theorem. 
The design procedure 
only differs in the inclusion
of the force-level input values to the stage costs. 
\begin{theorem}\label{thm:MPC_extension} 
Consider the nonholonomic kinematic system with dynamic extension~\eqref{eq:dynamic_extension_system}, with the state~$\bm{x}=[\bm{q}^\top \ \bm{v}^\top
]^\top \in \mathbb{R}^{n_x}$. Let $\p$ be a setpoint for the kinematic subsystem~\eqref{eq:kinematics_nonholonomic_driftless}, and let Assumption~\ref{ass:kin_sys_setpoint} hold.
Then, let the setpoint for the dynamic system~\eqref{eq:dynamic_extension_system} read~$\widetilde{\bm{p}}=[
    \bm{p}^\top \ \bm{0}_{n_v}^\top
]^\top\in\mathbb{R}^{n_x}$.
Let $\bm{w}(\p) \in \mathbb{N}^{n_q}$ be the nonholonomic weights of the kinematic subsystem~\eqref{eq:kinematics_nonholonomic_driftless} and let $\bm{z}(\bm{q};\bm{p})$ denote a corresponding system of privileged coordinates.
Consider an MPC controller with the OCP~\eqref{eq:MPC_cont} with 
\begin{align*}
  \bm{f}(\x,\bu)=\begin{bmatrix}
        \bm{G} (\bm{q})\bm{v} \\ \bm{M}^{-1}(\bm{q})\bm{B}{(\bm{q})}\bm{u} 
    \end{bmatrix}
\end{align*}
and stage costs
\begin{align}\label{eq:stage_cost_extended}
    \ell(\bm{x},\bm{u}) & = \ell(\bm{q},\bm{v})+\sum_{j=1}^{n_u} \bar{R}_{j} \lvert u_j\rvert^{d}\\
    &=\sum_{i=1}^{n_q} Q_{i} \lvert z_{i} (\bm{q};\bm{p})\rvert^{\frac{d}{w_i}} +\sum_{j=1}^{n_v} R_{j} \lvert v_j \rvert^{d} + 
    \sum_{j=1}^{n_v} \bar{R}_{j} \lvert u_j\rvert^{d} \nonumber
\end{align}
with tuning parameter~$d\in(0,\infty)$ and weights $Q_{i}>0$, $i\in\mathbb{Z}_{1:n_q}$, and $R_{j},\bar{R}_{j} > 0$, $j\in\mathbb{Z}_{1:n_v}$.
Then, for every sampling time $\delta_t > 0$, there exists a sufficiently large prediction horizon~$T$ such that $\widetilde{\bm{p}}$ is locally asymptotically stable  in the sense of Definition~\ref{def:asymptotic_stability} for the loop closed by the MPC controller. 
\end{theorem}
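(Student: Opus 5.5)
The plan mirrors the proof of Theorem~\ref{th:MPC}. We express the kinematic part in privileged coordinates $\bm{z}=\bm{z}(\bm{q};\bm{p})$ and keep $\bm{v}$ unchanged. This gives a local diffeomorphism of the state space near $\widetilde{\bm{p}}$ that maps $\widetilde{\bm{p}}$ to the origin. We then show that the transformed system admits a $(\bm{r},\bm{s},0)$-homogeneous approximation that is locally asymptotically null controllable. Given this, \cite[Th.~4.4, Rem.~4.7]{coron_model_2019} yields local asymptotic stability of the origin for the MPC closed loop with stage costs~\eqref{eq:stage_cost_extended}, for arbitrary $d>0$ and positive weights. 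Pulling back through the diffeomorphism gives the claim for $\widetilde{\bm{p}}$.

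\textbf{Homogeneity parameters.} The kinematic part in the $\bm{z}$-coordinates is governed by the approximation of Proposition~\ref{prop:homogeneous_approx_kinematic}. Writing $\dot{\bm{z}}=\bar{\bm{f}}(\bm{z},\bm{v})+\bm{R}(\bm{z},\bm{v})$, the map $\bar{\bm{f}}$ is $(\bm{w},\bm{1}_{n_v},0)$-homogeneous, so $v_j$ scales with $\alpha^{1}$. Since $\bm{v}$ is now a state, we need weights such that the $\bm{v}$-equation is homogeneous of degree $0$. With state weights $r_{q,i}=w_i$ and $r_{v,j}=1$, the condition $\dot{v}_j\sim\alpha^{1}$ forces $u_j$ to have weight $s_j=1$. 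We therefore propose $\bm{r}=[\bm{w}^\top\ \bm{1}_{n_v}^\top]$, $\bm{s}=\bm{1}_{n_v}$ and $\tau=0$. For the $\bm{v}$-dynamics we define the approximation $\dot{\bm{v}}=\bm{A}_0\bm{u}$, where $\bm{A}_0:=\bm{M}^{-1}(\bm{p})\bm{B}(\bm{p})$ is regular by Assumption~\ref{ass:kin_sys_setpoint}(iv). This map is exactly $(\bm{r},\bm{s},0)$-homogeneous.

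\textbf{Residual bounds.} For the $\bm{z}$-rows, the residual estimate of Proposition~\ref{prop:homogeneous_approx_kinematic} carries over directly, since the scaling of $(\bm{z},\bm{v})$ is unchanged. For the $\bm{v}$-rows, the residual is $(\bm{A}(\bm{q}(\bm{z}))-\bm{A}_0)\bm{u}$ with $\bm{A}=\bm{M}^{-1}\bm{B}$. This matrix is analytic by Assumption~\ref{ass:kin_sys_setpoint}(v), hence locally Lipschitz, so its norm is at most $L\lVert \bm{z}\rVert$. Under scaling, $\lVert\bm{\Lambda}_\alpha \bm{z}\rVert\le C\alpha^{\min_i w_i}=C\alpha$ for $\alpha\in(0,1]$, because $w_1=1$. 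Multiplying by $\lVert\bm{\Delta}_\alpha\bm{u}\rVert\le\alpha\rho$ gives a residual of order $M\alpha^{2}=M\alpha^{r_{v,j}+\tau+\eta}$ with $\eta=1$, which is what Definition~\ref{def:homogeneous_approximation} requires.

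\textbf{Null controllability (main obstacle).} The approximation is the cascade $\dot{\bm{z}}=\bar{\bm{f}}(\bm{z},\bm{v})$, $\dot{\bm{v}}=\bm{A}_0\bm{u}$, i.e.\ an integrator chain driving the homogeneous kinematics. By Proposition~\ref{prop:homogeneous_approx_kinematic}, the kinematic approximation is locally null controllable, so some input $\bm{v}(\cdot)$ steers $\bm{z}$ to $0$. The difficulty is that $\bm{v}$ must now start from an arbitrary $\bm{v}(0)$, end at $0$, and be generated by bounded $\bm{u}=\bm{A}_0^{-1}\dot{\bm{v}}$. This requires $\bm{v}(\cdot)$ to be absolutely continuous with bounded derivative.

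I would first try the following construction. On a short initial interval, drive $\bm{v}$ linearly to $0$; this moves $\bm{z}$ only by a small amount, controlled by homogeneity and continuity. Then apply a kinematic null-controlling input. Using the time-reparametrization invariance of driftless systems, this input can be chosen smooth, for example piecewise polynomial or sinusoidal Lie-bracket maneuvers as used for LARC, with endpoint values $0$. Differentiating it gives a bounded $\bm{u}$, and the homogeneous dilation keeps the inputs small near the origin. If asymptotic null controllability in the sense of \cite[Def.~3.2]{coron_model_2019} only needs a $\mathcal{KL}$-type estimate, one could instead combine a homogeneous kinematic controller with a backstepping-like integrator. The explicit smooth-steering construction is the more concrete route, however, and I would pursue it first.
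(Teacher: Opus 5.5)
Your proposal is correct, and apart from one step it is the paper's proof. The paper also keeps $\bm v$, passes to privileged coordinates $\bm z(\bm q;\bm p)$, and uses the composed approximation $\dot{\bm z}=\bar{\bm f}(\bm z,\bm v)$, $\dot{\bm v}=\bar{\bm B}\bm u$ with $\bar{\bm B}=\bm M^{-1}(\bm p)\bm B(\bm p)$. It takes homogeneity parameters $[\bm 1_{n_v}^\top\ \bm w^\top]^\top$, $\bm s=\bm 1_{n_v}$, $\tau=0$; your coordinate ordering is immaterial. For the $\bm v$-rows it gets an $\alpha^2$ residual from Taylor monomials with $\bm\nu^\top\bm w\geq 1$, which is equivalent to your Lipschitz bound. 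For the $\bm z$-rows it reuses the kinematic residual bound, and then it invokes \cite[Th.~4.4, Rem.~4.7]{coron_model_2019}.

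The one difference is how null controllability of the approximation is established. The paper argues in one line: the composed approximation satisfies the LARC because the $\bm v$-dynamics are fully actuated, and this ``directly implies'' null controllability. You instead build steering controls explicitly:
\begin{itemize}
\item first bring $\bm v$ to zero;
\item then run a kinematic steering input, reparametrized in time so that it starts and ends at zero velocity with bounded derivative, so that $\bm u=\bar{\bm B}^{-1}\dot{\bm v}$ is bounded;
\item finally use the $\tau=0$ dilation $(\bm\Lambda_\alpha\widetilde{\bm z}(\cdot),\bm\Delta_\alpha\bm u(\cdot))$ to make the controls small near the origin.
\end{itemize}

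Your route is the more solid one. The composed system has drift, and for systems with drift the LARC alone only gives accessibility; $\dot x_1=u$, $\dot x_2=x_1^2$ satisfies the LARC at $0$ but is not locally controllable. What actually makes the paper's claim true is the cascade structure, a fully actuated integrator feeding a controllable driftless system, and that is exactly what your construction uses. The paper's version buys brevity. Yours supplies the justification the paper leaves implicit, but it still needs one detail written out. You need a uniform bound on the steering controls over a compact set, for example the unit homogeneous sphere. Chow-type steering with a bounded number of switchings and bounded amplitudes on a neighbourhood gives this. Only with that bound does the dilation step yield admissible controls in $\mathbb U$ for all sufficiently small initial states.
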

\begin{proposition}
\label{prop:homogeneous_approx_w_dyn_ext}
Consider the nonholonomic kinematic system with dynamic extension~\eqref{eq:dynamic_extension_system}, with the state~$\bm{x}=[\bm{q}^\top \ \bm{v}^\top
]^\top \in \mathbb{R}^{n_x}$. Let $\p$ be a setpoint for the kinematic subsystem~\eqref{eq:kinematics_nonholonomic_driftless}, and let Assumption~\ref{ass:kin_sys_setpoint} hold. 
Then, there exists a $([\bm{1}_{n_v}^\top\,\bm{w}^\top]^\top,\bm{1}_{n_v},0)$-homogeneous system that is a homogeneous approximation of the dynamic system near the origin, expressed in the coordinates $[\bm{v}^\top \ \bm{z}^\top]^\top$, where $\z=\z(\bm{q};\p)$ are privileged coordinates of the kinematic subsystem. 
Moreover, the approximation is locally null controllable to the origin. 
\end{proposition}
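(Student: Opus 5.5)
The approximation will be the kinematic approximation of Proposition~\ref{prop:homogeneous_approx_kinematic} for the $\bm{z}$-part and the frozen input matrix $\bm{A}_0\coloneqq \bm{M}^{-1}(\p)\bm{B}(\p)$ for the $\bm{v}$-part. In the coordinates $\bm{x}=[\bm{v}^\top\ \bm{z}^\top]^\top$, with $\bm{z}=\bm{z}(\bm{q};\p)$ and the setpoint shifted to the origin, I propose
\begin{align*}
\dot{\bm{v}} = \bm{A}_0\bm{u}, \qquad \dot{\bm{z}} = \bar{\bm{f}}(\bm{z},\bm{v}).
\end{align*}
Here $\bar{\bm{f}}$ is the $(\bm{w},\bm{1}_{n_v},0)$-homogeneous approximation from Proposition~\ref{prop:homogeneous_approx_kinematic}. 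The plan is to (a) check homogeneity, (b) bound the residual as in Definition~\ref{def:homogeneous_approximation}, and (c) prove null controllability.

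\textbf{Homogeneity.} Take dilations $\bm{\Lambda}_\alpha=\operatorname{diag}(\alpha\bm{I}_{n_v},\alpha^{w_1},\dots,\alpha^{w_{n_q}})$ and $\bm{\Delta}_\alpha=\alpha\bm{I}_{n_v}$, with $\tau=0$.
\begin{itemize}
\item[] The $\bm{v}$-block satisfies $\bm{A}_0(\alpha\bm{u})=\alpha\,\bm{A}_0\bm{u}$, which is the required scaling since $r_i=1$ for these components.
\item[] The $\bm{z}$-block satisfies $\bar{\bm{f}}(\Lambda\bm{z},\alpha\bm{v})=\Lambda\bar{\bm{f}}(\bm{z},\bm{v})$. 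This holds because $\bm{v}$ carries the same scaling $\alpha^1$ whether it is viewed as an input (Proposition~\ref{prop:homogeneous_approx_kinematic}) or as a state component here.
\end{itemize}

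\textbf{Residual.} The $\bm{z}$-residual is exactly the kinematic residual of Proposition~\ref{prop:homogeneous_approx_kinematic}, evaluated with $\bm{v}$ in the role of the input. It is therefore bounded by $M\alpha^{w_i+\eta}$ on a neighbourhood.

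The $\bm{v}$-residual is $\bigl(\bm{M}^{-1}\bm{B}(\bm{q}(\bm{z}))-\bm{A}_0\bigr)\bm{u}$. By Assumption~\ref{ass:kin_sys_setpoint}(v) the map $\bm{M}^{-1}\bm{B}$ is analytic, hence locally Lipschitz. The map $\bm{z}\mapsto\bm{q}$ is a local diffeomorphism. Therefore this residual is bounded by $L\|\bm{\Lambda}_\alpha\bm{z}\|\,\alpha\|\bm{u}\|$. Since all $w_i\ge 1$ and $\alpha\le1$, we have $\|\bm{\Lambda}_\alpha\bm{z}\|\le\alpha\|\bm{z}\|$. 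The residual is thus at most $L\rho^2\alpha^2=L\rho^2\alpha^{1+0+1}$, so the $\bm{v}$-block works with $\eta=1$. Taking the minimum of the two $\eta$'s and the maximum of the two constants $M$ gives Definition~\ref{def:homogeneous_approximation}.

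\textbf{Null controllability.} This is the step I expect to be the main obstacle. The difficulty is that $\bm{v}$ is now a state: the kinematic steering controls must be realised as absolutely continuous velocity profiles generated by bounded $\bm{u}$.

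My plan is to steer in two phases, starting from $(\bm{v}_0,\bm{z}_0)$.
\begin{itemize}
\item[] \emph{Phase 1.} Apply $\bm{u}=-\bm{A}_0^{-1}\bm{v}_0$ on $[0,1]$; $\bm{A}_0$ is invertible by Assumption~\ref{ass:kin_sys_setpoint}(iv) and positive definiteness of $\bm{M}$. This brings $\bm{v}$ linearly to zero. Meanwhile $\bm{z}$ moves only to some $\bm{z}_1$, which is small when the data are small, by continuity of $\bar{\bm{f}}$.
\item[] \emph{Phase 2.} Use null controllability of the kinematic approximation from Proposition~\ref{prop:homogeneous_approx_kinematic}. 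That approximation is a driftless, nilpotent, homogeneous system satisfying the rank condition, so by Chow's theorem it can be steered to the origin with smooth (e.g.\ polynomial or sinusoidal) controls $\bm{v}(\cdot)$ that vanish at both endpoints. I then set $\bm{u}=\bm{A}_0^{-1}\dot{\bm{v}}$.
\end{itemize}

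The input constraint $\bm{u}\in\mathbb{U}$ is handled by homogeneity. Fix one smooth control family steering the unit pseudo-sphere $\{\sum_i|z_i|^{1/w_i}=1\}$ to the origin. Rescaling it via $\bm{\Lambda}_\alpha$, and using the time invariance of homogeneous driftless flows, gives controls of size $O(\alpha)$ for initial states of size $\alpha$. Their derivatives, and hence $\bm{u}$, are then small, so they lie in the interior of $\mathbb{U}$ near the origin.

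If uniform smooth steering on the pseudo-sphere proves awkward, I would fall back on compactness: cover the pseudo-sphere by finitely many open sets, on each of which a single smooth control works by continuous dependence on initial data.
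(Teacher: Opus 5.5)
Your construction, homogeneity check and residual estimate match the paper's (the composed approximation with $\bar{\bm B}=\bm M^{-1}(\p)\bm B(\p)$, Propositions~\ref{prop:homogeneity_approx_sys_ext} and~\ref{prop:extendedSystemApprox_homApprox}). Your Lipschitz bound $L\|\tilde{\bm\Lambda}_\alpha\bm z\|\,\alpha\|\bm u\|\le L\rho^2\alpha^2$ on the $\bm v$-residual is a cleaner substitute for the paper's termwise Taylor-series estimate.

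You genuinely differ from the paper on null controllability. The paper argues in one line: the nilpotent approximation satisfies the LARC globally and the $\bm v$-dynamics are fully actuated, so the composed system satisfies the LARC, and this "directly implies" null controllability. You instead build an explicit two-phase steering. First you kill $\bm v$ with a constant $\bm u$. Then you realise a smooth kinematic steering profile that vanishes at both ends through $\bm u=\bm A_0^{-1}\dot{\bm v}$, and you use the $\tau=0$ dilation invariance to push the controls into $\mathbb{U}$. The paper's route is shorter. However, the composed approximation has the drift $[\bm 0^\top\ \bar{\bm f}(\bm z,\bm v)^\top]^\top$, and for a system with drift the LARC alone only yields accessibility. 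The paper's step therefore tacitly relies on an additional controllability result for dynamic extensions of driftless systems. Your route avoids this and also delivers the control bounds that the asymptotic null controllability of \cite[Def.~3.2]{coron_model_2019} requires.

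One step needs tightening. In your compactness fallback, continuous dependence on initial data does not let a single control steer a whole neighbourhood of $\xi$ \emph{exactly} to the origin, only into a small pseudo-ball around it. Likewise, the uniformly bounded "smooth control family" on the pseudo-sphere in your primary plan is asserted rather than constructed. The standard repair runs as follows.
\begin{enumerate}[(i)]
\item For each $\xi$ on the pseudo-sphere, take a smooth control $\bm v_\xi$ that steers $\xi$ to $\bm 0$ and vanishes at both endpoints. You get it from Chow's piecewise-constant controls by a time reparametrisation whose derivative vanishes at the switching times; this is legitimate because $\bar{\bm f}$ is linear in $\bm v$.
\item Choose a neighbourhood $U_\xi$ that $\bm v_\xi$ maps into the pseudo-ball of radius $\tfrac12$.
\item Extract a finite subcover. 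This gives uniform bounds on the horizon, on $\|\bm v\|_\infty$ and on $\|\dot{\bm v}\|_\infty$.
\item Iterate using the dilation. Each stage ends with $\bm v=\bm 0$, so the iteration stays consistent.
\end{enumerate}
The result is geometric decay of both the homogeneous norm of the state and of $\|\bm u\|$. That is asymptotic null controllability, which is precisely the notion the stability argument uses. Exact steering with uniform bounds would instead require an implicit-function argument on a full-rank endpoint map.
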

\begin{proof}
    See end of Section~\ref{sec:Homogeneous Approximation for Kinematic Systems with Dynamic Extension}.
\end{proof}
\begin{proof}[Proof of Theorem~\ref{thm:MPC_extension}]
    The proof follows analogously to that of Theorem~\ref{th:MPC}. Proposition~\ref{prop:homogeneous_approx_w_dyn_ext} and \cite[Th.~4.4, Rem.~4.7]{coron_model_2019} yield local asymptotic stability of the origin for state $\bm{x}=[\bm{q}^\top \ \bm{v}^\top
]^\top \in \mathbb{R}^{n_x}$ when applying the stage costs~\eqref{eq:stage_cost_extended}. 
\end{proof}

\section{Design Procedure: Example}\label{sec:example}
The following example illustrates each step of the proposed design procedure, from the computation of the nonholonomic weights to the construction of the tailored stage cost.
To illustrate the proposed design procedure, 
we consider the articulated vehicle shown in Fig.~\ref{fig:sketch_artVeh}, i.e., a vehicle composed of two rigid bodies connected by an actuated joint. 

\subsection{Kinematic System}
    A kinematic model of the articulated vehicle 
    is given by
    \begin{align}
    \label{equ:kin_artVeh}
    \dot{\bm{q}} = 
    \underbrace{
    \begin{bmatrix} 
        \cos q_3 \\   
        \sin q_3 \\
        \frac{\sin q_4}{l_1 \cos q_4+l_2} \\
        0
    \end{bmatrix}}_{\displaystyle \bm{g}_1(\bm{q})}
    v_{1} + 
    \underbrace{
    \begin{bmatrix} 
        0 \\   
        0 \\
        \frac{l_2}{l_1 \cos q_4+l_2} \\
        1
    \end{bmatrix}}_{\displaystyle \bm{g}_2(\bm{q})}
    v_{2}
    \eqqcolon \bm{G}(\bm{q})\bm{v},
    \end{align}
    where the state~$\bm{q}\in\mathbb{R}^4$ comprises the planar position~$\begin{bmatrix}q_1 & q_2\end{bmatrix}^\top$ of the front vehicle body, its orientation~$q_3$, and the articulation angle~$q_4$, which is defined as the difference between the orientations of the front and rear vehicle bodies, cf.\ Fig.~\ref{fig:sketch_artVeh}. 
    The velocity-level input~$\bm{v}\in\mathbb{V} \subset \mathbb{R}^2$ consists of the translational velocity of the front vehicle body and the angular velocity of the articulation joint, respectively.
    For shortness of notation, we consider the specific vehicle geometry
    $l_1=l_2=\unit[1]{m}$ and subsequently use only the numerical values in SI units, i.e., $l_1=l_2=1$.
    The distributions are derived as $\Delta^{1} = \textnormal{span} \lbrace \bm{g}_1(\bm{q}),\bm{g}_2(\bm{q}) \rbrace$,
    $\Delta^{2} = \textnormal{span} \lbrace \bm{g}_1(\bm{q}),\bm{g}_2(\bm{q}), \bm{g}_3(\bm{q}) \rbrace$,
    and $\Delta^{3} = \textnormal{span} \lbrace \bm{g}_1(\bm{q}),\bm{g}_2(\bm{q}), \bm{g}_3(\bm{q}), \bm{g}_4(\bm{q}) \rbrace$,
    where 
    \begin{align*}
        \bm{g}_3(\bm{q}) &= [\bm{g}_1, \bm{g}_2] (\bm{q}) = a \begin{bmatrix}
         \sin q_3&
    -\cos q_3&
    -1 &
    0 \end{bmatrix}^\top,
    \\
    \bm{g}_4(\bm{q}) &= [\bm{g}_1, \bm{g}_3] (\bm{q}) =  a^2 \begin{bmatrix}
        -\sin(q_3-q_4)-\sin q_3\\
        \cos(q_3-q_4)+\cos q_3\\
0 \\0 \end{bmatrix}
    \end{align*}
    with $a = 1/ \left( \cos q_4 + 1 \right)$.    
    At $\bm{p}=\bm{0}$, the vector fields are locally smooth and analytic, excluding the singular configurations
    $q_4=(2k + 1)\pi,\ k\in \Z$. 
    Similarly, the ranks of the distributions are constant around $\bm{p}=\bm{0}$, i.e.,
    $\dim(\Delta^1)=2$, $\dim(\Delta^2)=3$, and $\dim(\Delta^3)=4$.
    Thus, $\bm{p}$ is regular and the LARC is fulfilled. The nonholonomic weights follow as
    $\bm{w} = \begin{bmatrix}1 & 1 & 2 & 3\end{bmatrix}^\top$.
    
    The coordinates~$\bm{q}$ are not privileged at the setpoint $\bm{p} = \bm{0}$, as, for instance, $\operatorname{ord}_{\bm{0}}(q_4)=1 \neq w_4$.
    However, the coordinates $\bm{z} = [q_1 \ \ q_4 \ \ q_4-2 q_3 \ \ 2 (q_2- q_3) +q_4]^\top$ obtained with Bella\"iche's Algorithm~\cite[p. 23]{jean_control_2014} form a system of privileged coordinates.
    In these coordinates, the kinematics read
    \begin{align}
    \label{equ:kin_artVeh_z}
    \dot{\bm{z}}\! =\! 
    \underbrace{
    \begin{bmatrix} 
        \cos (\frac{1}{2}(z_2-z_3)) \\   
        0 \\
        -2 \widetilde{a} \sin(z_2) \\
        2 \left(\sin (\frac{1}{2}(z_2-z_3)) - \widetilde{a} \sin(z_2)\right)
    \end{bmatrix}}_{\displaystyle \bm{Z}_1(\bm{z})}\!
    v_{1}\! + \!
    \underbrace{
    \begin{bmatrix} 
        0 \\   
        1 \\
        1-2 \widetilde{a} \\
        1-2 \widetilde{a}
    \end{bmatrix}}_{\displaystyle \bm{Z}_2(\bm{z})}\!
    v_{2}
    \end{align}
    with $\widetilde{a} = 1/ \left( \cos(z_2)+1 \right)$.
    Indeed, $\operatorname{ord}_{\bm{0}}(z_i)=w_i$ holds for all $i\in\Z_{1:n_q}$.
    As an example, consider the nonholonomic order of $z_3$ at the origin.
    The first-order nonholonomic derivatives $\mathcal{L}_{\bm{Z}_1} z_3 =  -2 \widetilde{a} \sin(z_2)$ and $\mathcal{L}_{\bm{Z}_2} z_3 = 1-2 \widetilde{a}$ vanish at $\bm{p}=\bm{0}$, while the second-order derivative
    $\mathcal{L}_{\bm{Z}_2}(\mathcal{L}_{\bm{Z}_1} z_3) = -2 \widetilde{a}$
    is nonzero at $\bm{p} = \bm{0}$.
    Consequently, $\operatorname{ord}_{\bm{0}}(z_3)=w_3$ holds.


%
Using the nonholonomic weights and the privileged coordinates, the stage costs as described in Theorem~\ref{th:MPC} read
\begin{align*}
    \ell(\bm{q},\bm{v}) & =
    Q_{1} \lvert q_1 \rvert ^{d} + Q_{2} \lvert q_4 \rvert ^{d} + Q_{3} \lvert q_4-2 q_3 \rvert ^{\frac{d}{2}} \\
    &+ Q_{4} \lvert 2 (q_2-q_3)+q_4 \rvert ^{\frac{d}{3}}
    + \sum_{j=1}^{2} R_{j}\lvert v_j\rvert^{d}
\end{align*}
with $d\in(0,\infty)$, $Q_{i}>0$, $i\in\mathbb{Z}_{1:4}$, and $R_{j} > 0$, $j\in\mathbb{Z}_{1:2}$.
%
\begin{figure}
    \centering
    \tikzset{
  COSline/.style = {line width = 0.4mm, ->, , >=stealth},
  CoordinateArrow/.style = {line width = 0.3mm, ->, blue},
  CoordinateName/.style = {blue},
  Notation/.style = {gray},
}

\begin{tikzpicture}[>=triangle 45]
\def\lCOS{1}
\def\xP{3.5}
\def\yP{1.8}
\def\thetaa{30}
\def\thetaaT{5}

\def\Laxle{1.1} 
\def\b{1.35*\Laxle} 
\def\Ltotal{1.25} 
\def\Lrect{0.8*\Ltotal}
\def\shiftMiddleRobot{-0.85*\Lrect}

\def\Rwheel{0.2*\Laxle} 
\def\bwheel{0.6*\Rwheel} 

\def\ax{1.2}
\def\ay{0}
\def\bx{1.2}
\def\by{0}
\def\widthTrailer{1.7}
\def\heightTrailer{0.975*\b}
\def\LaxleTrailer{0.7*\heightTrailer} 
\def\shiftMiddleTrailer{0}
\pgfmathsetmacro{\widthLinkingRobot}{\ax+\shiftMiddleRobot} 

\def\opacChassis{0.5} 
\def\colorChassis{cyan}

\def\colorChassisTrailer{yellow}
 
\def\widthLinkingTrailer{\bx-\shiftMiddleTrailer-0.5*\widthTrailer}
\def\heightLinkingTrailer{0.4}

\draw[gray, very thin] (0,0) -- ++(1+\xP,0); 
\draw[gray, very thin] (0,0) -- ++(0, 1+\yP); 
\draw[COSline] (0,0) -- ++(1,0); 
\draw[COSline] (0,0) -- ++(0,1); 
\filldraw[fill=black, draw=none] (0,0) circle (0.05) node[below, left] {$\mathcal{K}_\textnormal{I}$};

\draw[dashed] (\xP,\yP) -- (\xP,0) node[below] {$q_1$};
\draw[dashed] (\xP,\yP) -- (0,\yP) node[left] {$q_2$};

\begin{scope}[shift={(\xP,\yP)}, rotate = \thetaa]
	\coordinate (P) at (0,0);
	\begin{scope}[shift={(-\ax,\ay)}]
		\coordinate (Q) at (0,0);
	\end{scope}	
	\begin{scope}[shift={(\shiftMiddleRobot,-0.5*\b)}]
		\fill[fill=\colorChassis, opacity = \opacChassis] (0,0) rectangle ++(\Lrect,\b);
		\fill[fill=\colorChassis, opacity = \opacChassis] (\Lrect,0) arc[start angle=-90, end angle=90, radius=0.5*\b];

		\fill[fill=\colorChassis, opacity = \opacChassis] (-\widthLinkingRobot,\b/2-\heightLinkingTrailer/2) rectangle ++(\widthLinkingRobot,\heightLinkingTrailer);

		\fill[fill=\colorChassis, opacity = \opacChassis]
			(-\widthLinkingRobot,\b/2-\heightLinkingTrailer/2)
			arc[start angle=270, end angle=90, radius=0.5*\heightLinkingTrailer];
	\end{scope}
		
	\draw[line width = 0.3mm] (P) -- (Q) node[above, pos=0.7, xshift=-0.3mm, yshift=-0.6mm] {$l_1$};

	\draw[] (0,0.5*\Laxle) -- (0,-0.5*\Laxle);
	\begin{scope}[shift={(0,0.5*\Laxle)}]
		\filldraw[fill=black] (-\Rwheel,-0.5*\bwheel) rectangle ++(2*\Rwheel,\bwheel); 
	\end{scope}
	\begin{scope}[shift={(0,-0.5*\Laxle)}]
		\filldraw[fill=black] (-\Rwheel,-0.5*\bwheel) rectangle ++(2*\Rwheel,\bwheel); 
	\end{scope}

	\begin{scope}[shift={(Q)}, rotate = (\thetaaT - \thetaa), shift={(-\bx,0)}]
		\begin{scope}[shift={(-0.5*\widthTrailer,-0.5*\heightTrailer)}, shift={(\shiftMiddleTrailer,0)}] 
			\fill[fill=\colorChassisTrailer, opacity = \opacChassis] (0,0) rectangle ++(\widthTrailer,\heightTrailer);
			\fill[fill=\colorChassisTrailer, opacity = \opacChassis] (\widthTrailer,0.5*\heightTrailer-0.5*\heightLinkingTrailer) rectangle ++(\widthLinkingTrailer,\heightLinkingTrailer);
			\fill[fill=\colorChassisTrailer, opacity = \opacChassis]
				(\widthTrailer+\widthLinkingTrailer,0.5*\heightTrailer-0.5*\heightLinkingTrailer)
				arc[start angle=-90, end angle=90, radius=0.5*\heightLinkingTrailer];
		\end{scope}

		\coordinate (Pt) at (0,0);

		\draw[] (0,0.5*\LaxleTrailer) -- (0,-0.5*\LaxleTrailer);
		\begin{scope}[shift={(0,0.5*\LaxleTrailer)}]
			\filldraw[fill=black] (-\Rwheel,-0.5*\bwheel) rectangle ++(2*\Rwheel,\bwheel); 
		\end{scope}
		\begin{scope}[shift={(0,-0.5*\LaxleTrailer)}]
			\filldraw[fill=black] (-\Rwheel,-0.5*\bwheel) rectangle ++(2*\Rwheel,\bwheel); 
		\end{scope}

		\begin{scope}
			\coordinate (QB) at (0,0);
			\coordinate (QBb) at (1,0);

            \draw[line width = 0.3mm] (QB) -- (Q) node[above, pos=0.5] {$l_2$};;
		\end{scope}

	\end{scope}

	\begin{scope}[rotate = -\thetaa]
		\draw[Notation, dashed] (0,0) -- ++(1.3,0);
		\draw[->] (1.2,0) arc[start angle=0, end angle=\thetaa, radius=1.2] node[pos=0.4, right] {$q_3$};
	\end{scope}
	\draw[Notation, dashed] (0,0) -- ++(1.1,0);


	\begin{scope}[shift={(Pt)}, rotate = \thetaaT - \thetaa]
		\draw[Notation, dashed] (Q) -- ++(1,0);
			\draw[->] (\bx+0.65,0) arc[start angle=0, end angle=\thetaa- \thetaaT, radius=0.65] node[pos=0.4, right, xshift=-0.1mm, yshift=1mm] {$q_4$};
	\end{scope}


	\filldraw[fill=black, draw=none] (0,0) circle (0.08);
	\filldraw[fill=black, draw=none] (Q) circle (0.08);
	\filldraw[fill=white, draw=none] (Q) circle (0.05);
	\filldraw[fill=black, draw=none] (QB) circle (0.08);

	\filldraw[fill=black, draw=none] (Pt) circle (0.08);


\end{scope}

\begin{scope}
    \def\hitchWidth{0.25}
    \draw[line width=0.25mm, fill=white] (Q) circle (0.11);
    \filldraw[fill=black, draw=none] (Q) circle (0.035);
\end{scope}

\end{tikzpicture}
    \caption{Schematics of the articulated vehicle used as an example for a kinematic system with dynamic extension.}
    \label{fig:sketch_artVeh}
\end{figure}
\subsection{Dynamic Extension}

    While the kinematic model~\eqref{equ:kin_artVeh} describes the vehicle motion at the velocity level, the dynamic extension additionally accounts for the acting forces and torques as well as the inertial properties of the rigid bodies. 
    For each axle, the two wheels are collapsed into a single representative wheel located at the axle center.
    The resulting dynamic extension is given by~\eqref{eq:dynamic_extension_system}, with the kinematic part $\dot{\bm{q}} =\bm{G} (\bm{q} )\bm{v}$ as defined in~\eqref{equ:kin_artVeh}, $\bm{B} = \diag(b_{11},1)$, and
    \begin{align*}
        &\bm{M} (q_4) \\
        &= a^2 \begin{bmatrix}
            \mu_{11,0} + \mu_{11,c}\,\text{c}_4 &
            \text{s}_4 \left( \mu_{12,s}+\mu_{12,sc}\,\text{c}_4   \right) \\
            \text{s}_4 \left( \mu_{12,s}+\mu_{12,sc}\,\text{c}_4   \right) &
            \mu_{22,0} + \mu_{22,cc}\,\text{c}_4^2 
        \end{bmatrix},
    \end{align*}
    where $a = 1/ \left( \cos q_4 + 1 \right)$, $\text{c}_4 \coloneqq \cos q_4$, and \\
    $\text{s}_4 \coloneqq \sin q_4$.
    Moreover, $\bm{M} (q_4) = \bm{M}(q_4)^\top \succ 0$, and $b_{11}, \, \mu_{\bullet} \in\mathbb{R}$ are constants depending on the geometric and inertial parameters of the system.
    The extended state includes the velocities $\bm{v}$, whereas the inputs~$u_1$ and $u_2$ denote the front-wheel torque and the articulation torque, respectively.
    The stage costs given in Theorem~\ref{thm:MPC_extension} then read
    \begin{align}\label{eq:stage_cost_extended_robot}
        \ell(\bm{q},\bm{v},\bm{u}) =  
        Q_{1}& \lvert q_1 \rvert^{d} + Q_{2} \lvert q_4 \rvert ^{d} + Q_{3} \lvert q_4-2 q_3 \rvert^{\frac{d}{2}}  \nonumber \\
        {} & + Q_{4} \lvert 2 (q_2-q_3)+q_4 \rvert ^{\frac{d}{3}}\nonumber\\
        {}&+\sum_{j=1}^{2}R_{j} \lvert v_j \rvert^{d} + 
    \sum_{j=1}^{2} \bar{R}_{j}\lvert u_j\rvert^{d} 
    \end{align}
    with arbitrary~$d\in(0,\infty)$ and positive weighting coefficients $Q_{i}>0$, $i\in\mathbb{Z}_{1:n_q}$, and $R_{j}, \bar{R}_{j} > 0$, $j\in\mathbb{Z}_{1:n_u}$.
    %

\begin{figure*}[t]
    \centering
    \raisebox{-0.08cm}{\begin{tikzpicture}
\def\lw{0.4mm}
\def\hCLplot{4cm}
\pgfplotsset{
    PlotOSavg/.style={
        line width = 0.4mm
    }
}

\def\lCOS{1}
\def\xP{-2*\ax}
\def\yP{-1*\ax}
\def\thetaa{0}
\def\thetaaT{0}

\def\Laxle{1.1} 
\def\b{1.35*\Laxle} 
\def\Ltotal{1.25} 
\def\Lrect{0.8*\Ltotal}
\def\shiftMiddleRobot{-0.85*\Lrect}

\def\Rwheel{0.2*\Laxle} 
\def\bwheel{0.6*\Rwheel} 

\def\ax{1.2}
\def\ay{0}
\def\bx{1.2}
\def\by{0}
\def\widthTrailer{1.7}
\def\heightTrailer{0.975*\b}
\def\LaxleTrailer{0.7*\heightTrailer} 
\def\shiftMiddleTrailer{0}
\pgfmathsetmacro{\widthLinkingRobot}{\ax+\shiftMiddleRobot} 

\def\opacChassis{0.5} 
\def\colorChassis{cyan}

\def\colorChassisTrailer{yellow}
 
\def\widthLinkingTrailer{\bx-\shiftMiddleTrailer-0.5*\widthTrailer}
\def\heightLinkingTrailer{0.4}

\begin{axis}[
        name = path,
        anchor=south west,
        height = \hCLplot,
        xmin=-5,
        xmax=1.7,
        ymin=-3,
        ymax=2,
        axis equal image,
        xmajorgrids,
        ymajorgrids,
        xlabel = {$q_1$ in m},
        ylabel = {$q_2$ in m},
		ylabel style={align=center, yshift=-0.5cm},
        legend columns = 1,
        legend style={
            at={(1,1.03)},
            anchor=south east,
            legend cell align=left,
            draw=none,
            fill=none,
            font=\small,
            column sep=0.15cm,
            /tikz/every even column/.append style={column sep=0.15cm}
        }
	]

    \begin{scope}[
        shift={(axis cs:0,0)},
        x={(axis direction cs:1,0)},
        y={(axis direction cs:0,1)},
        scale={1/\ax}, 
        rotate=0
    ]

        \begin{scope}[shift={(\xP,\yP)}, rotate = \thetaa]
        \coordinate (P) at (0,0);
        \begin{scope}[shift={(-\ax,\ay)}]
            \coordinate (Q) at (0,0);
        \end{scope}	
        \begin{scope}[shift={(\shiftMiddleRobot,-0.5*\b)}]
            \fill[fill=\colorChassis, opacity = \opacChassis] (0,0) rectangle ++(\Lrect,\b);
            \fill[fill=\colorChassis, opacity = \opacChassis] (\Lrect,0) arc[start angle=-90, end angle=90, radius=0.5*\b];

            \fill[fill=\colorChassis, opacity = \opacChassis] (-\widthLinkingRobot,\b/2-\heightLinkingTrailer/2) rectangle ++(\widthLinkingRobot,\heightLinkingTrailer);

            \fill[fill=\colorChassis, opacity = \opacChassis]
                (-\widthLinkingRobot,\b/2-\heightLinkingTrailer/2)
                arc[start angle=270, end angle=90, radius=0.5*\heightLinkingTrailer];
        \end{scope}
            
        \draw[line width = 0.3mm] (P) -- (Q) node[above, pos=0.7, xshift=-0.3mm, yshift=-0.6mm] {};

        \draw[] (0,0.5*\Laxle) -- (0,-0.5*\Laxle);
        \begin{scope}[shift={(0,0.5*\Laxle)}]
            \filldraw[fill=black] (-\Rwheel,-0.5*\bwheel) rectangle ++(2*\Rwheel,\bwheel); 
        \end{scope}
        \begin{scope}[shift={(0,-0.5*\Laxle)}]
            \filldraw[fill=black] (-\Rwheel,-0.5*\bwheel) rectangle ++(2*\Rwheel,\bwheel); 
        \end{scope}

        \begin{scope}[shift={(Q)}, rotate = (\thetaaT - \thetaa), shift={(-\bx,0)}]
            \begin{scope}[shift={(-0.5*\widthTrailer,-0.5*\heightTrailer)}, shift={(\shiftMiddleTrailer,0)}] 
                \fill[fill=\colorChassisTrailer, opacity = \opacChassis] (0,0) rectangle ++(\widthTrailer,\heightTrailer);
                \fill[fill=\colorChassisTrailer, opacity = \opacChassis] (\widthTrailer,0.5*\heightTrailer-0.5*\heightLinkingTrailer) rectangle ++(\widthLinkingTrailer,\heightLinkingTrailer);
                \fill[fill=\colorChassisTrailer, opacity = \opacChassis]
                    (\widthTrailer+\widthLinkingTrailer,0.5*\heightTrailer-0.5*\heightLinkingTrailer)
                    arc[start angle=-90, end angle=90, radius=0.5*\heightLinkingTrailer];
            \end{scope}

            \coordinate (Pt) at (0,0);

            \draw[] (0,0.5*\LaxleTrailer) -- (0,-0.5*\LaxleTrailer);
            \begin{scope}[shift={(0,0.5*\LaxleTrailer)}]
                \filldraw[fill=black] (-\Rwheel,-0.5*\bwheel) rectangle ++(2*\Rwheel,\bwheel); 
            \end{scope}
            \begin{scope}[shift={(0,-0.5*\LaxleTrailer)}]
                \filldraw[fill=black] (-\Rwheel,-0.5*\bwheel) rectangle ++(2*\Rwheel,\bwheel); 
            \end{scope}

            \begin{scope}
                \coordinate (QB) at (0,0);
                \coordinate (QBb) at (1,0);

                \draw[line width = 0.3mm] (QB) -- (Q) node[above, pos=0.5] {};;
            \end{scope}

        \end{scope}

        \filldraw[fill=black, draw=none] (0,0) circle (0.08);
        \filldraw[fill=black, draw=none] (Q) circle (0.08);
        \filldraw[fill=white, draw=none] (Q) circle (0.05);
        \filldraw[fill=black, draw=none] (QB) circle (0.08);

        \filldraw[fill=black, draw=none] (Pt) circle (0.08);


    \end{scope}

    \begin{scope}
        \def\hitchWidth{0.25}
        \draw[line width=0.25mm, fill=white] (Q) circle (0.11);
        \filldraw[fill=black, draw=none] (Q) circle (0.035);
    \end{scope}

    \end{scope}

    \addplot[blueUni,PlotOSavg, dashdotted] table[ x expr=\thisrow{x}, y expr=\thisrow{y}] {SimArtVeh/Trajectoryx_x_y_theta_gamma_phi_l_phi_1_t_v_gamma_dot.txt};
    \addlegendentry{front vehicle body}
    \addplot[redUni,PlotOSavg, dotted] table[ x expr=\thisrow{xt}, y expr=\thisrow{yt}] {SimArtVeh/Trajectoryx_xt_yt_thetat.txt};
    \addlegendentry{rear vehicle body}

\end{axis}

\end{tikzpicture}}%
    \begin{tikzpicture}
\def\lw{0.4mm}
\def\hCLplot{4cm}
\pgfplotsset{
    PlotOSavg/.style={
        line width = 0.4mm
    }
}

\begin{axis}[
        name = valueFunction,
        anchor=south west,
        height = \hCLplot,
        width = 0.3\textwidth,
        xmin = 0,
        xmax = 100,
        ymax = 1e3,
        xtick={0,30,60,90},
        xmajorgrids,
        ymajorgrids,
        xlabel = {time in s},
        ylabel = {value function},
		ymode = log,
		ylabel style={align=center},
        legend columns = 1,
        legend cell align=left,
        legend style={
            at={(1,1.03)},
            anchor=south east,
            draw=none,
            fill=none,
            font=\small,
            column sep=0.15cm,
            /tikz/every even column/.append style={column sep=0.15cm}
        }
	]   
    
    \addplot[blueUni,PlotOSavg] table[ x expr=\thisrow{time}, y expr=\thisrow{costs}*1e-7] {SimArtVeh/Trajectorycosts.txt};
    \addlegendentry{mixed-exponents}
    \addplot[greenUni,PlotOSavg] table[ x expr=\thisrow{time}, y expr=\thisrow{costs}*1e-7] {SimArtVeh/Trajectorycosts_quad.txt};
    \addlegendentry{quadratic}
\end{axis}

\end{tikzpicture}%
    \begin{tikzpicture}
\def\lw{0.4mm}
\def\hCLplot{4cm}
\pgfplotsset{
    PlotOSavg/.style={
        line width = 0.4mm
    }
}
\pgfplotsset{
    PlotThin/.style={
        line width = 0.25mm
    }
}

\begin{axis}[
        name = valueFunction,
        anchor=south west,
        height = \hCLplot,
        width = 0.4\textwidth,
        xmin = 0,
        xmax = 100, 
        xtick={0,30,60,90},
        xmajorgrids,
        ymajorgrids,
        xlabel = {time in s},
        ylabel = {inputs in $\unit{Nm}$},
		ylabel style={align=center, yshift=-0.5cm},
        legend columns = 2,
        legend cell align=left,
        legend style={
            at={(1,1.03)},
            anchor=south east,
            draw=none,
            fill=none,
            font=\small,
            column sep=0.15cm,
            /tikz/every even column/.append style={column sep=0.15cm}
        }
	]   
    
    \addplot[blueUni,PlotThin] table[ x expr=\thisrow{time}, y expr=\thisrow{tau_l}] {SimArtVeh/Trajectoryu.txt};
    \addlegendentry{$u_1$, mixed-exponents}

    \addplot[greenUni,PlotThin] table[ x expr=\thisrow{time}, y expr=\thisrow{tau_l}] {SimArtVeh/Trajectoryu_quad.txt};
    \addlegendentry{$u_1$, quadratic}

    \addplot[black,PlotOSavg, dashdotted] table[ x expr=\thisrow{time}, y expr=\thisrow{tau_gama}] {SimArtVeh/Trajectoryu.txt};
    \addlegendentry{$u_2$, mixed-exponents}

    \addplot[redUni,PlotOSavg, dashdotted] table[ x expr=\thisrow{time}, y expr=\thisrow{tau_gama}] {SimArtVeh/Trajectoryu_quad.txt};
    \addlegendentry{$u_2$, quadratic}
\end{axis}

\end{tikzpicture}%
    \caption{Closed-loop trajectory (left), optimal
    value function (middle), and input trajectories (right) for the articulated vehicle in a simulated parking maneuver. \enquote{Mixed-exponents} denotes the usage of stage costs~\eqref{eq:stage_cost_extended_robot} for the OCP~\eqref{eq:MPC_cont}, whereas for \enquote{quadratic} conventional quadratic stage costs were employed, cf.~Section~\ref{subsec:MPC Simulation Results}.}
    \label{fig:MPC}
\end{figure*}
\subsection{MPC Simulation Results}
\label{subsec:MPC Simulation Results}
To illustrate the applicability of the proposed approach, a parking scenario for the articulated vehicle is considered in simulation.
For this purpose, the dynamic model is used, while corresponding results for a comparable kinematic robot-with-trailer system can be found in~\cite{RoseEbel22}.
A time-discretized version of the optimal control problem~\eqref{eq:MPC_cont} with stage costs \eqref{eq:stage_cost_extended_robot}, where $d=6$ and weighting coefficients $Q_{1}=Q_{2}=0.1$, $Q_{3}=Q_{4}=5$, $R_{1}=R_{2} = 0.125$, $\bar{R}_{1}=\bar{R}_{2}=1$, is formulated using CasADi~\cite{AnderssonEtAl19} and solved with Ipopt~\cite{WaechterBiegler06}, assuming a zero-order hold on the control input.
To this end, the system dynamics used within the MPC prediction model is discretized using the Runge–Kutta–Fehlberg 4(5) method with a fixed integration step size.
The sampling time is chosen as $\Delta t = \unit[0.25]{s}$ and the prediction horizon as $T = \unit[10]{s}$.
Moreover, input constraints are imposed as $\lvert u_j \rvert \leq \unit[2]{Nm}$, $j\in\mathbb{Z}_{1:2}$.
The vehicle is initialized at the position $q_1 = \unit[-2]{m}$, $q_2 = \unit[-1]{m}$, while all initial angles and velocities are set to zero.
The resulting closed-loop trajectory is shown in the plot on the left of Figure~\ref{fig:MPC}, where the vehicle converges to the setpoint $\p = \bm{0}$.
In particular, sub-millimeter and sub-millidegree accuracy is achieved for the first time at $t = \unit[52.25]{s}$, i.e., the position error falls below $\unit[10^{-3}]{m}$ in both the $q_1$ and $q_2$-directions, and the orientation error is below $\unit[10^{-3}]{\mathrm{deg}}$.

Motivated by previous observations that quadratic stage costs may fail to precisely control nonholonomic systems, cf.~\cite{muller_quadratic_2017,RoseEbel22,ebel_note_2023}, the middle plot of Figure~\ref{fig:MPC} compares the evolution of the value functions from MPC controllers using the proposed stage costs and conventional quadratic stage costs, respectively.
For the quadratic stage costs, the same weighting coefficients are used
where applicable.
While the value function 
associated with the proposed stage costs strictly decreases over time, the value function corresponding to the conventional quadratic stage costs decreases only marginally after approximately  $t=\unit[20]{s}$.
Around the same time, the corresponding control inputs obtained with the quadratic stage costs become small and subsequently alternate in sign, see the right plot of Figure~\ref{fig:MPC}. As a result, the vehicle remains nearly stationary and makes only negligible progress toward the setpoint, which is also reflected in the almost constant value function.

This indicates that, for the considered weighting coefficients, the quadratic stage costs do not provide a sufficient incentive to eliminate the remaining error.
Indeed, in the quadratic case, the vehicle does not reach the origin and the final pose reads
$\bm{q}(\unit[100]{s}) = \left[ \unit[0.37]{m} \ \unit[-0.04]{m} \ \unit[13.06]{deg} \ \unit[-33.70]{deg} \right]^\top$.

\section{Kinematic Systems: Homogeneous Approximation}
\label{sec:Homogeneous Approximation for Kinematic Systems}

In this section, we prove Proposition~\ref{prop:homogeneous_approx_kinematic} by constructing a 
homogeneous and locally null controllable approximation for the kinematic system.
In particular, we employ the nilpotent approximation introduced by Bella\"iche et al. as the “canonical nilpotent approximation" in~\cite{bellaiche_canonical_1993}. This approximation is constructed at a setpoint $\p\in\R^{n_q}$ of the driftless control-affine system~\eqref{eq:kinematics_nonholonomic_driftless} for which Assumption~\ref{ass:kin_sys_setpoint} holds, as we always presume in this section.
Then, the nilpotent approximation is obtained by transforming the system into privileged coordinates $\z(\q;\p)$ and truncating its Taylor expansion.

For this purpose, we first introduce the nonholonomic order of vector fields, which allows us to classify the terms in the Taylor series of the kinematics in privileged coordinates.
This classification is then used to construct the nilpotent approximation via truncation of the Taylor series. Following this, we prove that it is a homogeneous approximation after Definition~\ref{def:homogeneous_approximation}.
Some exemplary calculations for the considered example can be found in the appendix, see App.~\ref{app:example}.

\subsection{Nonholonomic Orders of the Control Vector Fields}
\label{subsec:Nonholonomic order for the control vector fields}

Based on the nonholonomic order for functions, see Table~\ref{tab:nonholsys}, we now define the nonholonomic order of vector fields. It characterizes the action of a vector field as a differential operator through the Lie derivative and, therefore, depends on the control vector fields of~\eqref{eq:kinematics_nonholonomic_driftless}. Since the nonholonomic order is a local attribute, we use the following notation: $C^{\infty}(\p)$ denotes the set of functions $f\!:\R^{n_q}\rightarrow\R$ that are smooth in a neighborhood of $\p$, and $C^{\infty,n_q}(\p)$ contains vector fields $\bm{g}\!:\R^{n_q}\rightarrow\R^{n_q}$ that are smooth in a neighborhood of $\p$.

\begin{definition}[Nonholonomic order of vector fields \cite{jean_control_2014}]
\label{def:nonhol_ord_vec_fields_and_operators}
    The nonholonomic order $\operatorname{ord}_{\p}(\bm{g})$ of a vector field $\bm{g}\in C^{\infty,n_q}(\p)$ at point $\p\in\R^{n_q}$ is defined as
    \begin{align}%
        \label{eq:def_nonhol_ord_vec_field}%
        \operatorname{ord}_{\p}(\bm{g})
        =
        \sup\! \left\{
        s\in\R\! :\! 
        \begin{array}{l}\!
        \operatorname{ord}_{\p}(\mathcal{L}_{\bm{g}}f)
        \geq s+\operatorname{ord}_{\p}(f),\!\\
        \forall f\in C^\infty(\p)
        \end{array}\!
        \right\}\!
        \end{align}
\end{definition}
\noindent and it follows that $\operatorname{ord}_{\p}(\bm{g})\in\mathbb{Z}$ since the nonholonomic orders of smooth functions are integers, except for the null vector field, which has infinite nonholonomic order, see~\cite{jean_control_2014}. 
\begin{lem}
\label{lem:nonh_ord_control_vec_fields}
Consider the driftless control-affine system with kinematics~\eqref{eq:kinematics_nonholonomic_driftless}, under Assumption~\ref{ass:kin_sys_setpoint}, with control vector fields $\bm{g}_j,\ j\in\Z_{1:n_v}$. 
It holds that each control vector field has a nonholonomic order of $-1$ in $\p$, i.e., $\operatorname{ord}_{\p}(\bm{g}_j)= -1,\ \forall\, j\in\Z_{1:n_v}$.
\end{lem}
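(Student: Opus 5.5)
We prove the two inequalities $\operatorname{ord}_{\p}(\bm{g}_j)\geq -1$ and $\operatorname{ord}_{\p}(\bm{g}_j)\leq -1$ separately, working directly with Definition~\ref{def:nonhol_ord_vec_fields_and_operators} and with the characterization of $\operatorname{ord}_{\p}(f)$ through vanishing nonholonomic derivatives.

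\emph{Lower bound.} Fix $j$ and $f\in C^\infty(\p)$, and set $k=\operatorname{ord}_{\p}(f)$. If $k=\infty$, all nonholonomic derivatives of $f$ vanish at $\p$. Then so do all nonholonomic derivatives of $\mathcal{L}_{\bm{g}_j}f$, since these are themselves nonholonomic derivatives of $f$; hence $\operatorname{ord}_{\p}(\mathcal{L}_{\bm{g}_j}f)=\infty$. If $k<\infty$, consider any nonholonomic derivative of $\mathcal{L}_{\bm{g}_j}f$ of order $m<k-1$, i.e.\ $\mathcal{L}_{\bm{g}_{i_1}}\cdots\mathcal{L}_{\bm{g}_{i_m}}(\mathcal{L}_{\bm{g}_j}f)$. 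This is a nonholonomic derivative of $f$ of order $m+1<k$, so it vanishes at $\p$ by definition of $k$. Therefore $\operatorname{ord}_{\p}(\mathcal{L}_{\bm{g}_j}f)\geq k-1=\operatorname{ord}_{\p}(f)-1$ for all $f$, and $s=-1$ belongs to the set in~\eqref{eq:def_nonhol_ord_vec_field}. (One should check the convention that order $0$ means $f(\p)\neq 0$; with it, the case $k=0$ is trivial because orders are nonnegative.)

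\emph{Upper bound.} We need a single test function $f$ with $\operatorname{ord}_{\p}(\mathcal{L}_{\bm{g}_j}f)=\operatorname{ord}_{\p}(f)-1$; this rules out every $s>-1$. Since $\bm{g}_j(\p)\neq\bm{0}$ by Assumption~\ref{ass:kin_sys_setpoint}(i), pick a linear function $f(\q)=\bm{c}^\top(\q-\p)$ with $\bm{c}^\top\bm{g}_j(\p)\neq 0$, for instance $\bm{c}=\bm{g}_j(\p)$. Then $f(\p)=0$ while $\mathcal{L}_{\bm{g}_j}f(\p)=\bm{c}^\top\bm{g}_j(\p)\neq0$, so $\operatorname{ord}_{\p}(f)=1$ and $\operatorname{ord}_{\p}(\mathcal{L}_{\bm{g}_j}f)=0$. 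Any $s$ in the set must satisfy $0\geq s+1$, i.e.\ $s\leq -1$. Combining both bounds yields $\operatorname{ord}_{\p}(\bm{g}_j)=-1$.

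The only delicate point is bookkeeping of the order conventions, in particular ensuring $\operatorname{ord}_{\p}(f)=1$ exactly, including that the order-$0$ derivative, $f$ itself, vanishes at $\p$, and handling the infinite-order case in the sup. The argument is otherwise elementary, and Assumption~\ref{ass:kin_sys_setpoint}(i) is exactly what the upper bound needs.
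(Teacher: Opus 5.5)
Your proposal is correct and follows the same route as the paper. The lower bound $\operatorname{ord}_{\p}(\bm{g}_j)\geq -1$ comes from observing that nonholonomic derivatives of $\mathcal{L}_{\bm{g}_j}f$ are nonholonomic derivatives of $f$ of one higher order, which you spell out more explicitly than the paper. The upper bound uses a linear test function whose derivative along $\bm{g}_j$ is nonzero at $\p$, by Assumption~\ref{ass:kin_sys_setpoint}(i); the paper takes $q_i-p_i$ for an index $i$ with $g_{i,j}(\p)\neq 0$, while you take $\bm{g}_j(\p)^\top(\q-\p)$, a purely cosmetic difference.
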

\begin{proof}
The nonholonomic order of smooth functions is by definition an integer (cf.~\cite{jean_control_2014}). 
According to Definition~\ref{def:nonhol_ord_vec_fields_and_operators}, if a control vector field $\bm{g}_j$, $j\in\Z_{1:n_v}$, had a nonholonomic order smaller than $-1$, then there would exist a function $f\in C^{\infty}(\p)$ for which the relation \begin{align*}
        \operatorname{ord}_{\p}(\mathcal{L}_{\bm{g}_j}f)< -1+\operatorname{ord}_{\p}(f)
    \end{align*}
holds.
This would directly contradict the definition of nonholonomic order: Since the nonholonomic order of a function~$\operatorname{ord}_{\p}(f)$ indicates the smallest order of the nonholonomic derivatives for which the value at $\p$ is nonzero, and the nonholonomic order is defined recursively, a Lie derivative of a function along a control vector field cannot decrease the nonholonomic order by more than one.
Therefore, the order of the control vector fields must be greater than or equal to $-1$, i.e., $\operatorname{ord}_{\p}(\bm{g}_j)~\geq~-1,\ \forall j\in\mathbb{Z}_{1:n_v}$.

By definition (cf. Definition~\ref{def:nonhol_ord_vec_fields_and_operators}), the order of the control vector fields  is equal to $-1$, if there is a function $f\in C^{\infty}(\p)$ such that 
\begin{align}
\label{eq:control_vector_fields_-1}
    \operatorname{ord}_{\p}(\mathcal{L}_{\bm{g_j}}f)=\operatorname{ord}_{\p}(f)-1
\end{align}
holds. Since the control vector fields are non-zero at $\p\in\R^{n_q}$, for each $j\in\Z_{1:n_v}$ there is an $i\in\mathbb{Z}_{1:n_q}$ for which $g_{i,j}(\p)\neq 0$ holds.
Then, the function
\begin{align*}
    f_j(\x)&=q_i-p_i
\end{align*}
fulfills the condition~\eqref{eq:control_vector_fields_-1}, which concludes the proof.   
\end{proof}

\subsection{Construction of the Nilpotent Approximation}
\label{subsec:Construction of the Nilpotent Approximation}
Based on the nonholonomic orders introduced above, we now construct the nilpotent approximation of the kinematic system~\eqref{eq:kinematics_nonholonomic_driftless}.
The first step of the construction 
is the transformation of the system into 
so-called privileged coordinates, see also Sec.~\ref{subsec:Systematic Design Procedure for Kinematic Systems}.
We determine the privileged coordinates using \emph{Bella\"iche's algorithm} as presented in~\cite[p. 23]{jean_control_2014}, which has also been employed in~\cite{RoseEbel22}.

Under the coordinate transformation, 
the kinematic system~\eqref{eq:kinematics_nonholonomic_driftless} is transformed into the system
\begin{align}
\label{eq:system_privileged}
    \dotz &=\sum_{j=1}^{n_v}\bm{Z}_j(\z)v_j
\end{align}
with vector fields $\bm{Z}_j:\R^{n_q}\rightarrow\R^{n_q},\ j\in\Z_{1:n_v}$.

For the second step of the construction of the nilpotent approximation, each vector field $\bm{Z}_j$, $j\in\Z_{1:n_v}$, of system~\eqref{eq:system_privileged} is expanded as a Taylor series, whose entries consist of sums of monomials. 
These monomials are then sorted by their nonholonomic order. 
To this end, we first prove some preliminary statements, then define the necessary notation to write the kinematics~\eqref{eq:system_privileged} with explicitly sorted vector fields, and finally define the nilpotent approximation, where the vector fields are appropriately truncated.
To describe the monomials in privileged coordinates $\bm{z}=[z_1\ \cdots\ z_{n_q}]^\top\in\R^{n_q}$, we use a multi-index $\bm{\nu}=[\nu_1 \ \cdots \ \nu_{n_q}]^\top\in\N_{0}^{n_q}$ (cf.~\cite[p. 20]{jean_control_2014}) to represent the exponents in the form
\begin{align*}
    \bm{z}^{\bm{\nu}}&=\prod_{i=1}^{n_q} z_i^{\nu_i}.
\end{align*}  
Through combination of the multi-index with the nonholonomic weights, we obtain the weighted degree, which is then proven to be equal to the nonholonomic order of monomials.
\begin{definition}[Weighted degree, {\cite[p.~20]{jean_control_2014}}]
\label{def:weighted_degree}
    Let $\p\in\R^{n_q}$ denote the setpoint, $\bm{w}\!: \bm{p}\mapsto [w_1(\p) \ \cdots \ w_{n_q}(\p)]^\top$ the vector-valued function defining the nonholonomic weights of our system, and $\bm{\nu}=[\nu_1\ \cdots\ \nu_{n_q}]^{\top}\in\N_{0}^{n_q}$ a multi-index. Then, the weighted degree $\omega~:~\N_{0}^{n_q}~\times~\R^{n_q}~\rightarrow~\N_0$ at point $\p$ depends on the multi-index~$\bm{\nu}$ and is defined as 
    \begin{align*}
       \omega(\bm{\nu};\p)\coloneqq\bm{\nu}^{\top}\bm{w}(\p)=\sum_{i=1}^{n_q}\nu_iw_i(\p).
    \end{align*}    
\end{definition}
\begin{lem}
\label{lem:weighted_deg_eq_nonhol_ord}
    For a kinematic system~\eqref{eq:kinematics_nonholonomic_driftless} with setpoint $\p\in\R^{n_q}$, let Assumption~\ref{ass:kin_sys_setpoint} hold. With $\w(\p)\in\N^{n_q}$ denoting the nonholonomic weights of the system and $\z=\z(\q;\p)$ the privileged coordinates, the weighted degree $\omega(\bm{\nu};\p)=\sum_{i=1}^{n_q}w_i(\p)\nu_i$ of a monomial $\bm{z}^{\bm{\nu}}=\Pi_{i=1}^{n_q}z_i^{\nu_i}$ is equal to its nonholonomic order $\operatorname{ord}_{\p}(\z^{\bm{\nu}})$.
\end{lem}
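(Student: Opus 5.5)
We prove the two inequalities $\operatorname{ord}_{\p}(\z^{\bm{\nu}})\geq\omega(\bm{\nu};\p)$ and $\operatorname{ord}_{\p}(\z^{\bm{\nu}})\leq\omega(\bm{\nu};\p)$ separately. Two standard properties of the nonholonomic order of functions will be used throughout, and they follow directly from its definition via nonholonomic derivatives.

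\emph{Leibniz rule.} A nonholonomic derivative of order $k$ of a product $fg$ is a sum of products $(\mathcal{L}_{\bm{g}_{I}} f)(\mathcal{L}_{\bm{g}_{J}} g)$ over splittings of the word of control fields into subwords $I,J$ with $|I|+|J|=k$.

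\emph{Consequences.} From the Leibniz rule, $\operatorname{ord}_{\p}(fg)\geq\operatorname{ord}_{\p}(f)+\operatorname{ord}_{\p}(g)$. Also, $\operatorname{ord}_{\p}(f+g)\geq\min\{\operatorname{ord}_{\p}(f),\operatorname{ord}_{\p}(g)\}$.

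\textbf{Lower bound.} Since $\operatorname{ord}_{\p}(z_i)=w_i$ by the defining property of privileged coordinates, induction on $|\bm{\nu}|$ with the product inequality gives $\operatorname{ord}_{\p}(\z^{\bm{\nu}})\geq\sum_i\nu_iw_i=\omega(\bm{\nu};\p)$. Concretely, every nonholonomic derivative of order $k<\omega(\bm{\nu};\p)$ of $\z^{\bm{\nu}}$ expands into products in which at least one factor $z_i$ is differentiated fewer than $w_i$ times. Such a factor vanishes at $\p$, so the whole derivative vanishes there.

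\textbf{Upper bound.} This is the main obstacle. We must exhibit a nonholonomic derivative of order exactly $\omega(\bm{\nu};\p)$ that is nonzero at $\p$. The plan is to use the dual structure of privileged coordinates.

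Choose vector fields $\bm{Y}_1,\dots,\bm{Y}_{n_q}$, each an iterated bracket of control fields, with $\bm{Y}_i$ of length $w_i$, forming an adapted frame at $\p$. This is possible by the definition of the weights in \eqref{eq:weights} and the regularity in Assumption~\ref{ass:kin_sys_setpoint}(iii). Privileged coordinates satisfy $\mathcal{L}_{\bm{Y}_k}z_i(\p)=0$ whenever $w_k\le w_i$ and $k\neq i$. They also satisfy $\mathcal{L}_{\bm{Y}_i}z_i(\p)\neq 0$, because $\operatorname{ord}_{\p}(z_i)=w_i$ and the $\mathrm{d}z_i(\p)$ are independent. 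For Bella\"iche's algorithm this is exactly the triangular structure the construction produces, and in general it is the characterization of privileged coordinates in~\cite{jean_control_2014}.

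Expanding each $\bm{Y}_i$ as a combination of words of length $w_i$ in the control fields, consider the operator
\[
D=\mathcal{L}_{\bm{Y}_1}^{\nu_1}\circ\cdots\circ\mathcal{L}_{\bm{Y}_{n_q}}^{\nu_{n_q}}.
\]
It is a linear combination of nonholonomic derivatives of order $\omega(\bm{\nu};\p)$. Apply $D$ to $\z^{\bm{\nu}}$ and evaluate at $\p$, using the Leibniz rule. Any distribution of the derivatives that gives some factor $z_i$ fewer than $w_i$ control-field derivatives vanishes at $\p$. Since the total count is exactly $\omega(\bm{\nu};\p)$, each factor $z_i$ must receive exactly weight $w_i$. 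By the duality relations, only the assignments pairing the $\nu_i$ copies of $\bm{Y}_i$ with the $\nu_i$ factors $z_i$ survive.

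We also have to rule out the cross terms that come from splitting a word of a bracket between different factors. Such a split gives one factor strictly fewer derivatives than its weight, or pairs it with pieces of total weight $w_i$ whose value reduces to $\mathcal{L}_{\bm{Y}_k}z_i(\p)$ for some $k\neq i$; both vanish. The result is
\[
D\z^{\bm{\nu}}(\p)=\prod_i\nu_i!\,\bigl(\mathcal{L}_{\bm{Y}_i}z_i(\p)\bigr)^{\nu_i}\neq0.
\]
Hence at least one nonholonomic derivative of order $\omega(\bm{\nu};\p)$ of $\z^{\bm{\nu}}$ is nonzero at $\p$, which gives $\operatorname{ord}_{\p}(\z^{\bm{\nu}})\le\omega(\bm{\nu};\p)$.

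Controlling these cross terms is the delicate step. If the direct combinatorial bookkeeping becomes unwieldy, the fallback is the equivalent route of~\cite{jean_control_2014}. There, one uses the dilations $z_i\mapsto\alpha^{w_i}z_i$ to show that the $\omega(\bm{\nu};\p)$-homogeneous part of every nonholonomic derivative is governed by the nilpotent approximation. Under that approximation the monomials are homogeneous of degree $\omega(\bm{\nu};\p)$ and a nonzero derivative of that order exists.
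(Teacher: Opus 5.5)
\textbf{Comparison with the paper.} The paper proves this in two lines. It combines $\operatorname{ord}_{\p}(z_i)=w_i$ with the multiplicativity $\operatorname{ord}_{\p}(fg)=\operatorname{ord}_{\p}(f)+\operatorname{ord}_{\p}(g)$ from Lemma~\ref{lem:operator_nonhol_order_extra}. Your lower bound is exactly the part of that argument the appendix actually justifies, since its Leibniz-rule reasoning only yields $\geq$. The reverse inequality is inherited from Jean's book without proof. So you are right that the upper bound is the real content, and a frame dual to the coordinates is a more self-contained route than the paper's. As written, however, your delicate step contains errors.

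\textbf{First error: the duality relations.} The relations $\mathcal{L}_{\bm{Y}_k}z_i(\p)=0$ for $w_k=w_i$, $k\neq i$, and $\mathcal{L}_{\bm{Y}_i}z_i(\p)\neq 0$ are not properties of privileged coordinates with respect to an arbitrary adapted frame of brackets. Suppose $w_1=w_2=1$ and $\mathcal{L}_{\bm{g}_k}z_i(\p)=\delta_{ik}$. Then $(z_1+z_2,\,z_1-z_2,\,z_3,\dots)$ are still privileged. But with $\bm{Y}_1=\bm{g}_1$, $\bm{Y}_2=\bm{g}_2$, your operator $D$ kills their product at $\p$: the two matchings contribute $+1$ and $-1$. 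The relations do hold for the frame used inside Bella\"iche's algorithm, but the lemma concerns arbitrary privileged coordinates.

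The fix is to \emph{choose} the frame. Each $\mathrm{d}z_i(\p)$ annihilates $\Delta^{w_i-1}(\p)$, and all $\mathrm{d}z_i(\p)$ are independent. A dimension count then shows that $\{\mathrm{d}z_i(\p): w_i=s\}$ induces a basis of the dual of $\Delta^{s}(\p)/\Delta^{s-1}(\p)$. Hence you can take $\bm{Y}_k$ to be a constant-coefficient combination of length-$w_k$ brackets with $\mathrm{d}z_i(\p)\,\bm{Y}_k(\p)=\delta_{ik}$ whenever $w_i=w_k$.

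\textbf{Second error: the cross terms.} Your disposal of the cross terms is wrong. If you expand brackets into words and apply the Leibniz rule letter by letter, the split terms do not vanish individually at $\p$. For the Heisenberg fields $\bm{g}_1=\partial_{z_1}-\frac{z_2}{2}\partial_{z_3}$, $\bm{g}_2=\partial_{z_2}+\frac{z_1}{2}\partial_{z_3}$, one gets $(\mathcal{L}_{\bm{g}_1}z_1)(\mathcal{L}_{\bm{g}_1}\mathcal{L}_{\bm{g}_2}z_3)(\bm{0})=\frac12$. Such terms only cancel in pairs, because $\mathcal{L}_{\bm{Y}_k}$ is itself a derivation. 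So apply the Leibniz rule to the derivations $\mathcal{L}_{\bm{Y}_k}$ directly; then no splitting arises.

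\textbf{Missing counting step.} ``Each factor receives total weight exactly $w_i$'' does not by itself exclude, e.g., two weight-$1$ derivations landing on a weight-$2$ factor. You need a counting argument. The number of weight-$s$ derivations in $D$ equals the number of weight-$s$ factors in $\z^{\bm{\nu}}$. By induction on $s$, every term that survives at $\p$ therefore assigns exactly one weight-$s$ derivation to each weight-$s$ factor. With the dual frame, only the $\prod_i \nu_i!$ diagonal assignments survive. This gives $D\z^{\bm{\nu}}(\p)=\prod_i\nu_i!\neq 0$ and completes your upper bound.
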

\begin{proof}
    From Lemma~\ref{lem:operator_nonhol_order_extra}, see App.~\ref{app:Lemmata}, 
    the nonholonomic order of a product of two functions $f,g\in C^{\infty}(\p)$ satisfies
    \begin{align*}       
        \operatorname{ord}_{\p}(fg)&=\operatorname{ord}_{\p}(f)+\operatorname{ord}_{\p}(g).
    \end{align*}
    Since $\operatorname{ord}_{\p}(z_i)=w_i(\p)$ holds for $ i\in\Z_{1:n_q}$ by definition for privileged coordinates, the statement follows directly from the definition of the weighted degree, see Definition~\ref{def:weighted_degree}.
\end{proof}
Since the nonholonomic order of a sum of functions is bounded from below by the smallest nonholonomic order of its summands 
(cf. Lemma~\ref{lem:operator_nonhol_order_extra} in Appendix~\ref{app:Lemmata}), it follows that a Taylor series of an analytic function contains only monomials with a nonholonomic order greater than or equal to that of the function itself with at least one monomial having an equal nonholonomic order. To apply these results to the Taylor expansion of the control vector field~$\bm{Z}_j (\bm{z})$, we first determine the nonholonomic order of the functions $Z_{i,j}$ for $i\in\mathbb{Z}_{1:n_q}$.
\begin{lem}
\label{lem:ord_vec_fields_equal}
Consider the driftless control-affine system \eqref{eq:kinematics_nonholonomic_driftless}, with the setpoint $\p\in\R^{n_q}$, and let Assumption~\ref{ass:kin_sys_setpoint} hold. We express the kinematics in the privileged coordinates~\eqref{eq:system_privileged} for each $z_i=z_i(\q;\p),\ i\in\Z_{1:n_q}$, with $Z_{i,j}:\R^{n_q}\rightarrow\R,\ j\in\Z_{1:n_v},$ denoting the corresponding $i$-th row of the vector field~$\bm{Z}_j$.
Then, the nonholonomic order in $\p$ of the functions $Z_{i,j}$ with $Z_{i,j}\not\equiv 0$
is equal to $w_i(\p)-1$ for all $i\in\Z_{1:n_q},\ j\in\Z_{1:n_v}$.
\end{lem}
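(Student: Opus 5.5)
The plan is to compare $\operatorname{ord}_{\p}(Z_{i,j})$ with $w_i-1$ from both sides, using that $Z_{i,j}=\mathcal{L}_{\bm{Z}_j}z_i$. Since the coordinates are obtained by a local diffeomorphism, $\bm{Z}_j$ is the push-forward of $\bm{g}_j$. The Lie derivative of the coordinate function $z_i$ along $\bm{Z}_j$ is exactly the $i$-th component of $\bm{Z}_j$. Nonholonomic orders are coordinate independent, so Lemma~\ref{lem:nonh_ord_control_vec_fields} applies to $\bm{Z}_j$ and gives $\operatorname{ord}_{\p}(\bm{Z}_j)=-1$.

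\textbf{Lower bound.} By Definition~\ref{def:nonhol_ord_vec_fields_and_operators} with $f=z_i$,
\begin{equation*}
\operatorname{ord}_{\p}(Z_{i,j})=\operatorname{ord}_{\p}(\mathcal{L}_{\bm{Z}_j}z_i)\geq \operatorname{ord}_{\p}(\bm{Z}_j)+\operatorname{ord}_{\p}(z_i)=w_i-1 .
\end{equation*}
Here $\operatorname{ord}_{\p}(z_i)=w_i$ because the coordinates are privileged. This step is routine.

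\textbf{Upper bound.} Since $\operatorname{ord}_{\p}(z_i)=w_i$, some nonholonomic derivative $\mathcal{L}_{\bm{Z}_{k_1}}\cdots\mathcal{L}_{\bm{Z}_{k_{w_i}}}z_i$ is nonzero at $\p$. Its innermost factor is $Z_{i,k_{w_i}}$, so this expression is a nonholonomic derivative of order $w_i-1$ of $Z_{i,k_{w_i}}$. Hence $\operatorname{ord}_{\p}(Z_{i,k_{w_i}})\le w_i-1$, and with the lower bound, equality holds for the index $j=k_{w_i}$. To reach a general $j$ with $Z_{i,j}\not\equiv0$, I would use the Taylor expansion of $Z_{i,j}$ in privileged coordinates together with Lemma~\ref{lem:weighted_deg_eq_nonhol_ord}. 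The goal would be to show that a monomial of weighted degree exactly $w_i-1$ must occur, by applying Bella\"iche's construction (Algorithm~\ref{alg:AlgorithmBellaiche}) to each control field separately.

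\textbf{Main obstacle.} The hard step is the upper bound for \emph{every} $j$, and I expect it to fail as stated. The example of Section~\ref{sec:example} gives
\begin{equation*}
Z_{3,2}=1-2\widetilde a=\frac{\cos z_2-1}{\cos z_2+1}=-\tfrac{1}{4}z_2^2+\dots .
\end{equation*}
This function is not identically zero, but its nonholonomic order at $\bm{0}$ is $2$, whereas $w_3-1=1$. So the final part of the argument only establishes the following two facts:
\begin{itemize}
\item $\operatorname{ord}_{\p}(Z_{i,j})\ge w_i-1$ for all $j$;
\item equality holds for at least one $j$.
\end{itemize}
This is precisely what the truncation in the nilpotent approximation uses. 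The nilpotent approximation keeps, in each $Z_{i,j}$, the monomials of weighted degree exactly $w_i-1$, and treats the rest as the higher-order residual of Definition~\ref{def:homogeneous_approximation}. I would therefore prove the statement in this corrected form and flag that equality for every nonvanishing $Z_{i,j}$ cannot hold in general.
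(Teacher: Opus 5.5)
Your proposal is correct. It is also more accurate than the paper: the lemma as literally stated, with equality for \emph{every} nonvanishing $Z_{i,j}$, is false, and your counterexample is valid. To make it airtight, spell out the order computation. In \eqref{equ:kin_artVeh_z} one has $Z_{2,1}\equiv 0$ and $Z_{2,2}\equiv 1$, so every nonholonomic derivative of a function of $z_2$ alone reduces to an ordinary $z_2$-derivative. Take $f=Z_{3,2}=(\cos z_2-1)/(\cos z_2+1)$, viewed as a function of $z_2$. Then $f(0)=f'(0)=0$ and $\mathcal{L}_{\bm{Z}_2}^{2}f(\bm{0})=f''(0)=-\tfrac12\neq 0$. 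Hence $\operatorname{ord}_{\bm{0}}(Z_{3,2})=2\neq 1=w_3-1$. The paper's own appendix example confirms this, since it files this entry as $Z_{3,2}^{[0]}$ rather than $Z_{3,2}^{[-1]}$.

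The paper's proof does not give a different argument; it essentially asserts the conclusion. It records $\operatorname{ord}_{\p}(\bm{Z}_j)=-1$ and writes $\bm{Z}_j=\sum_i Z_{i,j}\partial_{z_i}$. It then cites \cite[Prop.~2.2]{jean_control_2014} and \cite{bellaiche_canonical_1993} to conclude $\operatorname{ord}_{\p}(Z_{i,j})=w_i-1$ for all nonvanishing entries.

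In privileged coordinates, $\bm{z}^{\bm{\nu}}\partial_{z_i}$ has order $\omega(\bm{\nu})-w_i$. So what those results actually yield is the componentwise bound $\operatorname{ord}_{\p}(Z_{i,j})\geq w_i-1$, with equality forced only in at least one row $i$ of each $\bm{Z}_j$. The passage to equality for every entry is exactly the unjustified step.

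Your route replaces the citation with two elementary arguments. The lower bound can even be read off directly from the definition of the order of a function: an order-$m$ nonholonomic derivative of $\mathcal{L}_{\bm{Z}_j}z_i$ is an order-$(m+1)$ derivative of $z_i$, so Lemma~\ref{lem:nonh_ord_control_vec_fields} is not needed. The upper bound for $j=k_{w_i}$ comes from peeling the innermost Lie derivative off a nonvanishing order-$w_i$ derivative of $z_i$.

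Your corrected statement is precisely what the rest of the paper uses: the lower bound for all $(i,j)$, and equality for some $j$ in each row $i$. Both \eqref{eq:weighted_degree_geq-1} and the residual bound in Proposition~\ref{prop:homoApproxNilpotent} need only the inequality. Likewise, \eqref{eq:weighted_degree_eq-1} holds per row $i$, not per entry $(i,j)$. So the main results survive, but the lemma should be restated in your form.
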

\begin{proof}
The vector fields $\bm{Z}_{j}$ have the same order as the control vector fields $\bm{g}_j$ (cf.~\cite{jean_control_2014,bellaiche_canonical_1993}), namely $-1$ for all $j\in\Z_{1:n_v}$ (cf. Lemma~\ref{lem:nonh_ord_control_vec_fields}).
Using the unit vectors $\partial_{z_i}$ for the coordinates $z_i,\ i\in\Z_{1:n_q}$, we write the vector fields of the kinematics in privileged coordinates~\eqref{eq:system_privileged}, for all $j\in\Z_{1:n_v}$, as
\begin{align*}
    \bm{Z}_{j}&=\sum_{i=1}^{n_q}Z_{i,j}(\z)\partial_{z_i}.
\end{align*}
Then, considering~\cite[Proposition 2.2]{jean_control_2014} as well as Bella\"iche et al. \cite{bellaiche_canonical_1993}, we conclude that for all $i\in\Z_{1:n_q},j\in\Z_{1:n_v}$, with $Z_{i,j}\not\equiv0$,
\begin{align*}
    \operatorname{ord}_{\p}(Z_{i,j})&= w_i-1
\end{align*}
holds.
\end{proof}

It now follows that the Taylor series of the vector fields $\bm{Z}_j,\ j\in\Z_{1:n_v},$ 
in privileged coordinates~\eqref{eq:system_privileged} consists of monomials whose weighted degrees~$\omega(\bm{\nu};\p)$ satisfy
\begin{align}
\label{eq:weighted_degree_geq-1}
    w(\bm{\nu};\p)-w_i(\p)&\geq -1.
\end{align}
Further, in each row there exists a monomial with a nonholonomic order for which the it holds that 
\begin{align}
\label{eq:weighted_degree_eq-1}
    \omega(\bm{\nu};\p)-w_i(\p)&= -1.
\end{align}

In the following, we abbreviate $\bm{w}=\bm{w}(\bm{p})$, as well as $\omega(\bm{\nu})=\omega(\bm{\nu};\p)$,  for simplicity.
We define a set containing all multi-indices with the same weighted degree offset by the weight $w_i$ which corresponds to the row in which the monomial is placed, as
\begin{align}
\label{eq:weighted_indices_set}
   I_{\nu}(\bm{w},i,k)&\coloneqq \big\{\bm{\nu}\in\N_{0}^{n_q}:\omega(\bm{\nu})-w_i=k \big\} 
\end{align} for $k\in\N_0\cup\{-1\}$. Hence, $I_{\nu}(\bm{w},i,-1)$ contains precisely all the multi-indices for which \eqref{eq:weighted_degree_eq-1} is fulfilled, and $I_{\nu}(\bm{w},i,k)$ for $k\in\N_0$ contains the monomials for which \eqref{eq:weighted_degree_geq-1} holds with a strict inequality (and therefore \eqref{eq:weighted_degree_eq-1} does not hold).

Using this notation, the Taylor series of the vector field $\bm{Z}_j,\ j\in\Z_{1:n_v}$, can be written row-wise as 
\begin{align*}
    Z_{i,j} (\bm{z})&= \sum_{k\in\N_0\cup\{-1\}}\,\sum_{\bm{\nu}\in I_\nu(\w,i,k)}c_{i,j,\bm{\nu}}\,\bm{z}^{\bm{\nu}},%
\end{align*}
with $i\in\Z_{1:n_q}$, $c_{i,j,\bm{\nu}}\in\R$ denoting the coefficient of the corresponding monomials.
To group monomials of equal weighted degree, we define functions containing only monomials with multi-indices $\bm{\nu}$ in the set $I_\nu(\w,i,k)$ for all $k\in\N_0\cup\{-1\}$, namely
\begin{align}\label{eq:sum_weightedDegree_l}
    Z^{[k]}_{i,j}(\bm{z})&\coloneqq \sum_{\bm{\nu}\in I_\nu(\w,i,k)}c_{i,j,\bm{\nu}}\,\bm{z}^{\bm{\nu}}.
\end{align}
Each monomial appears in exactly one function~$\bm{Z}_{j}^{[k]}$ so that $\sum_{k\in\N_{0}\cup\{-1\}}Z^{[k]}_{i,j}=Z_{i,j}$ holds.
Thus, we write the kinematics in privileged coordinates~\eqref{eq:system_privileged} as
\begin{align}
    \dot{\bm{z}}
    &=\sum_{j=1}^{n_v}\sum_{k\in\N_0\cup\{-1\}}\bm{Z}_{j}^{[k]}v_j.
\label{eq:system_privileged_taylor}
\end{align}

Finally, following \cite{bellaiche_canonical_1993}, the nilpotent approximation is obtained by the truncation of the Taylor expansion~\eqref{eq:system_privileged_taylor},
keeping only the vector fields~$\bm{Z}_{j}^{[-1]}$ that consist of monomials with a nonholonomic order of~$-1$. This yields an approximate system
\begin{align}
\label{eq:nilpotent_approximation_kinematics}
    \dot{\z}=\bar{\bm{f}}(\z,\bm{v}) &= \sum_{j=1}^{n_v} \bm{Z}_{j}^{[-1]}v_j,  
\end{align}
which provides a local approximation of the full kinematics~\eqref{eq:system_privileged}.

%
%

\subsection{Homogeneity of the Nilpotent Approximation}\label{subsec:HomogeneityNilpApprox}

\begin{proposition}\label{prop:homogeneity}
    Consider the driftless control-affine system~\eqref{eq:kinematics_nonholonomic_driftless}, which is locally controllable in the regular setpoint $\p\in\R^{n_q}$, with control vector fields that are smooth and analytic in a neighborhood of $\p$. 
    Let the nilpotent approximation~\eqref{eq:nilpotent_approximation_kinematics} in $\p$ be constructed as described in Sec.~\ref{subsec:Construction of the Nilpotent Approximation}
    with nonholonomic weights~$\w(\p) \in \mathbb{N}^{n_q}$. 
    This nilpotent approximation is $(\bm{r},\bm{s},\tau)$-homogeneous according to Definition~\ref{def:homogeneousSystem} in its coordinates $\z$ and $\bm{v}$, with homogeneity parameters~$\bm{r} = \w$, $\bm{s} = \bm{1}_{n_v}$, and $\tau=0$. 
\end{proposition}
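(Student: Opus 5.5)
The plan is to verify the homogeneity identity \eqref{eq:homogeneity_condition_definition} directly, component by component, for $\bar{\bm{f}}(\z,\bm{v})=\sum_{j=1}^{n_v}\bm{Z}_j^{[-1]}(\z)v_j$ with $\bm{\Lambda}_\alpha=\operatorname{diag}(\alpha^{w_1},\ldots,\alpha^{w_{n_q}})$, $\bm{\Delta}_\alpha=\alpha\bm{I}_{n_v}$ and $\tau=0$. No limiting argument is needed. By construction in \eqref{eq:sum_weightedDegree_l}, each component $Z^{[-1]}_{i,j}$ is a sum of monomials $c_{i,j,\bm{\nu}}\z^{\bm{\nu}}$ with $\bm{\nu}\in I_\nu(\w,i,-1)$, that is, with $\omega(\bm{\nu})=w_i-1$.

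The first step is to check that this sum is finite, so that $Z^{[-1]}_{i,j}$ is a genuine polynomial and no convergence issue arises. Since every $w_k\geq 1$, the condition $\sum_k \nu_k w_k = w_i-1$ forces $|\bm{\nu}|\leq w_i-1$. Hence $I_\nu(\w,i,-1)$ is a finite set, and $Z^{[-1]}_{i,j}$ is a polynomial in $\z$ defined for all $\z\in\R^{n_q}$. This matters because Definition~\ref{def:homogeneousSystem} requires the identity for all $(\z,\bm{v})$. In particular, if $w_i=1$, then $I_\nu(\w,i,-1)=\{\bm{0}\}$ and $Z^{[-1]}_{i,j}$ is simply a constant.

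The second step is the scaling computation. For a single monomial, $(\bm{\Lambda}_\alpha\z)^{\bm{\nu}}=\prod_k(\alpha^{w_k}z_k)^{\nu_k}=\alpha^{\omega(\bm{\nu})}\z^{\bm{\nu}}=\alpha^{w_i-1}\z^{\bm{\nu}}$. It follows that $Z^{[-1]}_{i,j}(\bm{\Lambda}_\alpha\z)=\alpha^{w_i-1}Z^{[-1]}_{i,j}(\z)$. Multiplying by the scaled input $\alpha v_j$ gives
\[
\bar f_i(\bm{\Lambda}_\alpha\z,\bm{\Delta}_\alpha\bm{v})=\sum_{j=1}^{n_v}\alpha^{w_i-1}Z^{[-1]}_{i,j}(\z)\,\alpha v_j=\alpha^{w_i}\bar f_i(\z,\bm{v})=\alpha^{0}\bigl(\bm{\Lambda}_\alpha\bar{\bm{f}}(\z,\bm{v})\bigr)_i .
\]
This holds for every $i$, every $\alpha\geq 0$ (reading $\alpha^0=1$, consistent with the convention at $\alpha=0$) and all $(\z,\bm{v})$. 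Together with $\bm{r}=\w\in(0,\infty)^{n_q}$, $\bm{s}=\bm{1}_{n_v}$ and $\tau=0>-\min_i w_i$, this establishes $(\w,\bm{1}_{n_v},0)$-homogeneity.

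The only delicate point is bookkeeping. The privileged coordinates are centred at $\p$, so that $\z(\p;\p)=\bm{0}$ and the Taylor expansion is taken at $\z=\bm{0}$; the grouping \eqref{eq:sum_weightedDegree_l} then refers to exponents of the $z$-variables themselves. The statement also relies on Lemma~\ref{lem:ord_vec_fields_equal} and \eqref{eq:weighted_degree_geq-1} only to ensure that no terms of weighted degree below $w_i-1$ were discarded, so that the truncation is well defined. The homogeneity itself does not depend on that lemma. I expect this consistency check, rather than the scaling computation, to be the only real obstacle.
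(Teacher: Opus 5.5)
Your proposal is correct and follows the paper's proof essentially verbatim: you write $\bar f_i$ as a sum of monomials with $\omega(\bm{\nu})=w_i-1$, use $(\bm{\Lambda}_\alpha\z)^{\bm{\nu}}=\alpha^{\omega(\bm{\nu})}\z^{\bm{\nu}}$ together with the factor $\alpha$ from $\bm{\Delta}_\alpha\bm{v}$, and obtain $\alpha^{w_i}\bar f_i(\z,\bm{v})$ componentwise. Your extra observation that $I_\nu(\w,i,-1)$ is finite, so that each $Z^{[-1]}_{i,j}$ is a globally defined polynomial as Definition~\ref{def:homogeneousSystem} requires, is a welcome detail that the paper leaves implicit.
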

\begin{proof}  
    In order to show homogeneity, we have that the 
    nilpotent approximation~\eqref{eq:nilpotent_approximation_kinematics} of the kinematics is given by $\dot{\bm{z}}=\bar{\bm{f}}(\bm{z},\bm{v})$, meaning that for all $i\in\Z_{1:n_q}$
\begin{align*}
    \bar{f}_i(\bm{z},\bm{u})=\sum_{j=1}^{n_v} Z_{i,j}^{[-1]}v_j
    =\sum_{j=1}^{n_v}\sum_{\bm{\nu}\in I_\nu(\w,i,-1)}c_{i,j,\bm{\nu}}\, \bm{z}^{\bm{\nu}}\, v_j
\end{align*}
holds.
For every monomial~$\bm{z}^{\bm{\nu}}$, the equality
\begin{align}
    (\Lambda_{\alpha}\z)^{\bm{\nu}}&=\prod_{i=1}^{n_q}(\alpha^{r_i}z_i)^{\nu_i}
    =\alpha^{(\nu_1r_1+\ldots+\nu_{n_{q}}r_{n_q})}\prod_{i=1}^{n_q}z_i^{\nu_i}  \label{eq:alpha_weighted_index}\nonumber \\
    &=\alpha^{\omega(\bm{\nu})}\z^{\bm{\nu}}. 
\end{align}
holds. 
Then, we set $\bm{s}=\bm{1}_{n_v}$ and $\tau=0$, and use the weighted degree~$\omega(\bm{\nu})$ (cf.~Definition~\ref{def:weighted_degree}),  in order to state that for all $i\in\Z_{1:n_q}$ it holds that
\begin{align*}                      \bar{f}_i(\bm{\Lambda}_{\alpha}\bm{z},\bm{\Delta_\alpha}\bm{v})&\overset{\hphantom{\eqref{eq:alpha_weighted_index}}}{=}\sum_{j=1}^{n_v}\sum_{\bm{\nu}\in I_\nu(\w,i,-1)}c_{i,j,\bm{\nu}}\,(\Lambda_{\alpha}\bm{z})^{\bm{\nu}}\,\alpha v_j\\
    &\overset{\eqref{eq:alpha_weighted_index}}{=}\sum_{j=1}^{n_v}\sum_{\bm{\nu}\in I_\nu(\w,i,-1)}\alpha^{(\omega(\bm{\nu})+1)}\, c_{i,j,\bm{\nu}}\,\bm{z}^{\bm{\nu}}\, v_j\\
    &\overset{\eqref{eq:weighted_indices_set}}{=} \alpha^{w_i}\underbrace{\sum_{j=1}^{n_v}\sum_{\bm{\nu}\in I_\nu(\w,i,-1)}c_{i,j,\bm{\nu}}\,\bm{z}^{\bm{\nu}}\, v_j}_{\displaystyle =\bar{f}_i(\bm{z},\bm{v})} \ .
\end{align*}
Therefore, each component $\bar{f}_i$, $i\in\Z_{1:n_q}$, fulfills the homogeneity condition~\eqref{eq:homogeneity_condition_definition} with $r_i\coloneqq w_i$.
Thus, the nilpotent approximation is $(\bm{r},\bm{s},\tau)$-homogeneous, and the weights $\bm{w}$ are equivalent to the homogeneity parameters $\bm{r}$. 
\end{proof}
It follows from the equality $\bm{r} = \bm{w}$ and Assumption~\ref{ass:kin_sys_setpoint}~(i) that the parameters $\bm{r}$ defining a $(\bm{r},\bm{1}_{n_v},0)$-homogeneity triplet are unique.
%
%
Now, we generalize the findings from Proposition~\ref{prop:homogeneity} in order to gain more insight into the properties of the residual functions, which result from the truncation of the system into a nilpotent approximation.
\begin{cor}
\label{cor:hom_all}
We consider the driftless control-affine system~\eqref{eq:kinematics_nonholonomic_driftless} with setpoint $\p\in\R^{n_q}$, and let Assumption~\ref{ass:kin_sys_setpoint} hold. The functions $Z_{i,j}^{[k]}:\R^{n_q}\rightarrow\R$, for $k\in\N_0\cup\{-1\}$, are obtained by sorting the vector fields $\bm{Z}_j$, $j\in\Z_{1:n_v}$, that describe the kinematics in privileged coordinates~\eqref{eq:system_privileged} (cf.~Section~\ref{subsec:Systematic Design Procedure for Kinematic Systems}).

Then, for every $k\in\N_0\cup\{-1\}$, the system with kinematics~$\bar{f}_{i}^{[k]}(\z,\bm{v})\coloneqq\sum_{j=1}^{n_v}Z_{i,j}^{[k]}(\z)\, v_j$
is homogeneous according to Definition~\ref{def:homogeneousSystem} in coordinates $\z$ and $\bm{v}$, with homogeneity parameters $r_i=w_i$ for $i\in\Z_{1:n_q}$, $\bm{s}=\bm{1}_{n_v}$, and $\tau =k+1$.
\end{cor}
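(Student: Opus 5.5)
The argument mirrors the proof of Proposition~\ref{prop:homogeneity}, with the truncation level $-1$ replaced by an arbitrary $k\in\N_0\cup\{-1\}$. Fix $k$ and a row index $i\in\Z_{1:n_q}$. By construction~\eqref{eq:sum_weightedDegree_l}, the $i$-th component of the system is
\begin{align*}
\bar{f}_i^{[k]}(\z,\bm{v})=\sum_{j=1}^{n_v}\sum_{\bm{\nu}\in I_\nu(\w,i,k)}c_{i,j,\bm{\nu}}\,\z^{\bm{\nu}}\,v_j .
\end{align*}
Every multi-index $\bm{\nu}$ occurring here satisfies $\omega(\bm{\nu})=w_i+k$, by the definition~\eqref{eq:weighted_indices_set} of $I_\nu(\w,i,k)$.

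Next, take $\bm{\Lambda}_\alpha=\operatorname{diag}(\alpha^{w_1},\ldots,\alpha^{w_{n_q}})$ and $\bm{\Delta}_\alpha=\alpha\bm{I}_{n_v}$, i.e., $\bm{r}=\w$ and $\bm{s}=\bm{1}_{n_v}$. The identity~\eqref{eq:alpha_weighted_index} gives $(\bm{\Lambda}_\alpha\z)^{\bm{\nu}}=\alpha^{\omega(\bm{\nu})}\z^{\bm{\nu}}=\alpha^{w_i+k}\z^{\bm{\nu}}$. The input scaling contributes one further factor $\alpha$, so every monomial term is multiplied by the same power $\alpha^{w_i+k+1}$. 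Factoring this power out of the finite or convergent sum yields
\begin{align*}
\bar{f}_i^{[k]}(\bm{\Lambda}_\alpha\z,\bm{\Delta}_\alpha\bm{v})=\alpha^{k+1}\,\alpha^{w_i}\,\bar{f}_i^{[k]}(\z,\bm{v}).
\end{align*}
This is exactly the $i$-th row of~\eqref{eq:homogeneity_condition_definition} with $\tau=k+1$.

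It remains to check the admissibility conditions and the well-definedness of $Z_{i,j}^{[k]}$. Since $k\geq-1$, we have $\tau=k+1\geq 0>-\min_i w_i$, because all weights satisfy $w_i\geq1$. Each set $I_\nu(\w,i,k)$ is finite: $\sum_l\nu_l w_l=w_i+k$ with $w_l\geq1$ bounds every $\nu_l$. Hence $Z_{i,j}^{[k]}$ is a polynomial and the homogeneity identity holds globally on $\R^{n_q}\times\R^{n_v}$ and for all $\alpha\geq0$, not only near the origin. This finiteness is the only point needing care, namely that the components are defined as genuine functions and not merely as formal pieces of the Taylor series, and it is immediate from the positivity of the weights. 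If some $I_\nu(\w,i,k)$ is empty, the corresponding component vanishes and satisfies the identity trivially.
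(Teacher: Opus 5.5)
Your proof is correct and follows the paper's own argument. You fix a row $i$, use $\omega(\bm{\nu})-w_i=k$ on $I_\nu(\w,i,k)$ together with the monomial scaling identity and the extra factor $\alpha$ from $\bm{\Delta}_\alpha=\alpha\bm{I}_{n_v}$, and pull out $\alpha^{k+1}\alpha^{w_i}$. Your further checks are correct details the paper leaves implicit: that $\tau=k+1>-\min_i w_i$, and that each $I_\nu(\w,i,k)$ is finite, so $Z_{i,j}^{[k]}$ is a polynomial and the identity holds globally for all $\alpha\geq 0$.
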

\begin{proof}
The proof is similar to the one for Proposition~\ref{prop:homogeneity}. 
For the weighted degree,
\begin{align}
\label{eq:weighted_degree_minus_weight=k}
 \omega(\bm{\nu})-w_i=k   
\end{align}
holds for all rows $i\in\Z_{1:n_q}$. 
Further, by choosing $\bm{s}=\bm{1}_{n_v}$ and $\tau=k+1$, we obtain, 
\begin{align*}          
    \bar{f}_{i}^{[k]}(\bm{\Lambda}_{\alpha}\bm{z},\bm{\Delta_\alpha}\bm{v})&\overset{\hphantom{\eqref{eq:alpha_weighted_index}}}{=}\sum_{j=1}^{n_v}\sum_{\bm{\nu}\in I_\nu(\w,i,k)}c_{i,j,\bm{\nu}}\,(\Lambda_{\alpha}\bm{z})^{\bm{\nu}}\,\alpha v_j\\
    &\overset{\eqref{eq:alpha_weighted_index}}{=}\sum_{j=1}^{n_v}\sum_{\bm{\nu}\in I_\nu(\w,i,k)}\alpha^{(\omega(\bm{\nu})+1)}\, c_{i,j,\bm{\nu}}\,\bm{z}^{\bm{\nu}}\, v_j\\
    &\overset{\eqref{eq:weighted_degree_minus_weight=k}}{=} \alpha^{k+1}\alpha^{w_i}\underbrace{\sum_{j=1}^{n_v}\sum_{\bm{\nu}\in I_\nu(\w,i,k)}c_{i,j,\bm{\nu}}\,\bm{z}^{\bm{\nu}}\, v_j}_{\displaystyle =\bar{f}_{i}^{[k]}(\bm{z},\bm{v})}.
\end{align*}
For all $i\in\Z_{1:n_q}$, $\bar{f}_{i}^{[k]}$ fulfills the condition~\eqref{eq:homogeneity_condition_definition}, and therefore the corresponding system with kinematics~$\dot{\bm{z}}=\bar{f}_{i}^{[k]}(\z,\bm{v})$
is $(\bm{w},\bm{1}_{n_v},k+1)$-homogeneous. 
\end{proof}
Now, we consider the second prerequisite for the proof of Theorem~\ref{th:MPC}, which is that the nilpotent approximation~\eqref{eq:nilpotent_approximation_kinematics} is a homogeneous approximation of the kinematic system~\eqref{eq:system_privileged} in privileged coordinates near the origin according to Definition~\ref{def:homogeneous_approximation}.
\begin{proposition}
\label{prop:homoApproxNilpotent}
    We consider the driftless control-affine system~\eqref{eq:kinematics_nonholonomic_driftless} with setpoint $\p$ and let Assumption~\ref{ass:kin_sys_setpoint} hold. 
    Then there exist privileged coordinates $\z$ and a nilpotent approximation
    \begin{align*}
        \dotz &=\bar{\bm{f}}(\z,\bm{v})=\sum\nolimits_{j=1}^{n_v}\bm{Z}_{j}^{[-1]}v_j, \end{align*}
        which is $(\bm{w},\bm{1}_{n_v},0)$-homogeneous according to Proposition~\ref{prop:homogeneity}.
    Then, the nilpotent approximation is a $(\bm{w},\bm{1}_{n_v},0)$-homogeneous approximation of the system~\eqref{eq:system_privileged} in privileged coordinates near the origin according to Definition~\ref{def:homogeneous_approximation}, i.e., there exist $\rho,M,\eta>0$ such that~\eqref{eq:homogeneousApproximationResiduum} holds for the residual $\bm{R}(\z,\bm{v})\coloneqq\bm{f}(\z,\bm{v})-\bar{\bm{f}}(\z,\bm{v})$.
\end{proposition}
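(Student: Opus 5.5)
The plan is to write the residual row-wise as the tail of the graded Taylor expansion~\eqref{eq:system_privileged_taylor}. For each $i\in\Z_{1:n_q}$ this gives
\[
R_i(\z,\bm{v})=\sum_{j=1}^{n_v}\sum_{k\in\N_0}Z_{i,j}^{[k]}(\z)\,v_j=\sum_{k\in\N_0}\bar f_i^{[k]}(\z,\bm{v}).
\]
This is legitimate because the privileged coordinates obtained from Bella\"iche's algorithm are an analytic local diffeomorphism around $\p$ (polynomial corrections of analytic coordinates). Hence the fields $\bm{Z}_j$ are analytic at the origin by Assumption~\ref{ass:kin_sys_setpoint}(i), and their Taylor series converge absolutely on some ball $\lVert\z\rVert\le R$.

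By Corollary~\ref{cor:hom_all}, each summand satisfies
\[
\bar f_i^{[k]}(\bm{\Lambda}_\alpha\z,\bm{\Delta}_\alpha\bm{v})=\alpha^{w_i+k+1}\bar f_i^{[k]}(\z,\bm{v}).
\]
Since $\alpha\in(0,1]$, we have $\alpha^{w_i+k+1}\le\alpha^{w_i+1}$ for all $k\ge0$. Therefore
\[
|R_i(\bm{\Lambda}_\alpha\z,\bm{\Delta}_\alpha\bm{v})|\le \alpha^{w_i+1}\sum_{k\in\N_0}|\bar f_i^{[k]}(\z,\bm{v})|\le \alpha^{w_i+1}\,\lVert\bm{v}\rVert_1\sum_{j=1}^{n_v}\sum_{\bm{\nu}}|c_{i,j,\bm{\nu}}|\,|\z^{\bm{\nu}}|.
\]
This is exactly~\eqref{eq:homogeneousApproximationResiduum} with $\tau=0$ and $\eta=1$, once the last factor is shown to be bounded uniformly for $\lVert\z\rVert,\lVert\bm{v}\rVert\le\rho$.

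The main step is that uniform bound, together with justifying the regrouping of the series. Choose $\rho<R/\sqrt{n_q}$, so that every component satisfies $|z_i|\le\rho<R$ and the polydisc of radius $\rho$ lies inside the region of absolute convergence. Then the majorant series $\sum_{\bm{\nu}}|c_{i,j,\bm{\nu}}|\rho^{|\bm{\nu}|}$ is finite; call its maximum over $i,j$ by $C$. Absolute convergence also allows the monomials to be rearranged into the blocks $I_\nu(\bm{w},i,k)$ without changing the sum. It makes $\sum_k\bar f_i^{[k]}$ converge to $R_i$ as well.

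Two points need checking. First, the scaled argument $\bm{\Lambda}_\alpha\z$ stays in the ball, since $\alpha\le1$ and $w_i\ge1$. Second, the grouping starts at $k=-1$, i.e.\ no monomial has $\omega(\bm{\nu})-w_i<-1$; this is guaranteed by Lemma~\ref{lem:ord_vec_fields_equal} and~\eqref{eq:weighted_degree_geq-1}. We then set $M:=n_v\rho\,C$ (absorbing the factor from $\lVert\bm{v}\rVert_1\le\sqrt{n_v}\rho$) and $\eta=1$. This proves the claim, with $\tau=0$ coming from Proposition~\ref{prop:homogeneity}.
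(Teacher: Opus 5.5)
Your proof is correct and follows essentially the same route as the paper. Both write the residual as the graded tail $\sum_{k\in\N_0}\bar f_i^{[k]}$, use Corollary~\ref{cor:hom_all} to pull out the factor $\alpha^{w_i+1}$, and bound what remains uniformly on a small neighborhood via analyticity with $\eta=1$. Your explicit majorant $\sum_{\bm{\nu}}|c_{i,j,\bm{\nu}}|\rho^{|\bm{\nu}|}$ shows the bound is uniform in $\alpha$ more transparently than the paper's Weierstrass M-test, and it makes the paper's separate treatment of $\alpha=1$ unnecessary. One small bookkeeping point: pulling $\lVert\bm{v}\rVert_1$ out of the $j$-sum, as in your display, gives $n_v^{3/2}\rho C$. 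Your constant $M=n_v\rho C$ is still valid, because the sharper estimate $\sum_j|v_j|\,C\le\sqrt{n_v}\rho C$ yields it.
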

\begin{proof}
The residual $\bm{R}(\bm{z},\bm{v})$ of the nilpotent approximation is described by
\begin{align*}
\bm{R}(\bm{z},\bm{v}) &= \bm{f}(\bm{z},\bm{v}) - \bar{\bm{f}}(\bm{z},\bm{v})\\
&= \sum_{j=1}^{n_v}\sum_{k\in\N_0\cup\{-1\}}\bm{Z}_{j}^{[k]}(\bm{z}) v_j
- \sum_{j=1}^{n_v} \bm{Z}_{j}^{[-1]}(\bm{z}) v_j\\
&= \sum_{j=1}^{n_v}\sum_{k\in\N_0}\bm{Z}_{j}^{[k]} (\bm{z}) v_j.
\end{align*}
Since for all $k\in\N_0$ and $i\in\Z_{1:n_q}$ the functions $\bar{f}_{i}^{[k]}(\z,\bm{v})=\sum_{j=1}^{n_v}Z_{i,j}^{[k]}v_j$ are $(\bm{r},\bm{s},\widetilde{\tau})$-homogeneous with $\bm{r}=\bm{w},\ \bm{s}=\bm{1}_{n_v}$ and $\widetilde{\tau}=k+1$  (cf. Corollary~\ref{cor:hom_all}), the bound 
\begin{align*}
    \left\lvert R_{i}(\bm{\Lambda}_{\alpha}\bm{z},\bm{\Delta}_{\alpha}\bm{v}) \right\rvert
    &=\left\lvert \sum_{j=1}^{n_v}\sum_{k\in\N_0}Z_{i,j}^{[k]} (\bm{\Lambda}_{\alpha} \bm{z}) \, \alpha v_j \right\rvert \\
    &=\left\lvert \sum_{j=1}^{n_v}\sum_{k\in\N_0}\alpha^{k+1+w_i}Z_{i,j}^{[k]} (\bm{z}) \,  v_j \right\rvert \\
    & =\alpha^{w_i+1}\left\lvert \sum_{j=1}^{n_v}\sum_{k\in\N_0}\alpha^{k}Z_{i,j}^{[k]} (\bm{z}) \,  v_j\right\rvert
\end{align*}
holds for all $i\in\Z_{1:n_q}$. 
Since the function $Z_{i,j}=\sum_{k\in\N_0}Z_{i,j}^{[k]}$ is analytical, its Taylor series converges absolutely inside a convergence radius of $R>0$. 
We assume that $\rho\in(0,R)\cap(0,1)$ and $\eta=1$.
Further, $\alpha\in(0,1]$ holds, and $\lVert \bm{v} \rVert\leq \rho$ holds, which implies that $\lvert v_j\rvert\leq\rho<1$ holds for all $j\in\Z_{1:n_v}$.
Then, we employ the Weierstrass M-test, 
which implies that for all $i\in\Z_{1:n_q}$ and all $j\in\Z_{1:n_v}$ the series $\sum_{k\in\N_0}\alpha^{k}Z_{i,j}^{[k]} (\bm{z})v_j$ converges absolutely and uniformly with a convergence radius equal to or larger than $R$. Since this series consists of continuous functions, we can bound the convergence values with a constant $C_{i,j}>0$ on the compact set where $\lVert\z\rVert,\lVert \bm{v}\rVert \leq \rho$ holds, 
\begin{align*}
\left\lvert\sum_{k\in\N_0}\alpha^{k}Z_{i,j}^{[k]} (\bm{z}) \,  v_j\right\rvert&\leq C_{i,j}, \quad \forall\,\lVert\z\rVert,\lVert \bm{v}\rVert \leq \rho.
\end{align*}
We define
\begin{align*}
    \bar{M}_i&=n_v  \max_{j\in\Z_{1:n_v}}C_{i,j}.
\end{align*}
Then, the residual can be bounded separately for the cases $\alpha \in (0, 1)$ and $\alpha = 1$.
For the case $\alpha\in(0,1)$, 
    \begin{align*}
      \left\lvert R_{i}(\bm{\Lambda}_{\alpha}\bm{z},\bm{\Delta}_{\alpha}\bm{v}) \right\rvert
    &=\alpha^{w_i+1}\left\lvert \sum_{j=1}^{n_v}\sum_{k\in\N_0}\alpha^{k}Z_{i,j}^{[k]} (\bm{z}) \,  v_j\right\rvert \\
    &\leq \alpha^{w_i+1}\sum_{j=1}^{n_v}\underbrace{\left\lvert \sum_{k\in\N_0}\alpha^{k}Z_{i,j}^{[k]} (\bm{z}) \,  v_j\right\rvert}_{\leq\, \bar{M}_i/n_v}\\
    & \leq \alpha^{w_i+1}\bar{M}_i
    \end{align*}
holds. 

For the case $\alpha=1$, since the residuum is a continuous function, there exists $\tilde{M}_i>0$ such that
    \begin{align*}\left\lvert R_{i}(\bm{\Lambda}_{1}\bm{z},\bm{\Delta}_{1}\bm{v}) \right\rvert&=\left\lvert R_{i}(\bm{z},\bm{v}) \right\rvert\\
    &\leq\tilde{M}_i,\quad \forall \lVert \z \rVert,\lVert \bm{v} \rVert \leq\rho
    \end{align*}
holds.
By combining these two bounds, the condition~\eqref{eq:homogeneousApproximationResiduum} is fulfilled for all $\alpha\in(0,1]$ , with $\rho\in(0,1)\cap(0,R)$, $M=\max_{i\in\mathbb{Z}_{1:n_q}}\{\bar{M}_i,\tilde{M}_i\}$, and $\eta=1$. 
\end{proof}
\begin{proof}[Proof of Proposition~\ref{prop:homogeneous_approx_kinematic}]
According to Proposition~\ref{prop:homoApproxNilpotent}, the residual of the nilpotent approximation of a kinematic system is bounded on an open, bounded and convex set around the setpoint $\p$. Since the nilpotent approximation is locally null controllable to the origin (cf. ~\cite[Lemma 2.1]{jean_control_2014}), it fulfills the conditions stated in Proposition~\ref{prop:homogeneous_approx_kinematic}.
\end{proof}
%
%
\section{Dynamic Extension: Homogeneous Approximation}
\label{sec:Homogeneous Approximation for Kinematic Systems with Dynamic Extension}
In this section, we consider nonholonomic systems with a dynamic extension as presented in Section~\ref{subsec:Dynamic Extension of Nonholonomic Kinematics}, and prove that there exists a corresponding homogeneous approximation which fulfills the conditions of Proposition~\ref{prop:homogeneous_approx_w_dyn_ext}.
The system's kinematics is described by \eqref{eq:kinematics_nonholonomic_driftless} and the dynamic extension by~\eqref{eq:dynamic_part_dynamic_system}, yielding the overall system dynamics
\begin{align*}
     \begin{bmatrix}
            \dot{\bm{q}} \\   \bm{M}(\bm{q})\dot{\bm{v}}
        \end{bmatrix} =
        \begin{bmatrix}
            \bm{G} (\bm{q}) \\ \bm{0}_{n_v\times n_v} 
        \end{bmatrix}\bm{v} 
        +
        \begin{bmatrix}
            \bm{0}_{n_q\times n_u} \\\bm{B}(\bm{q})
        \end{bmatrix}
        \bm{u}.
\end{align*}

%
%
To obtain a nilpotent approximation of the system with dynamic extension~\eqref{eq:dynamic_extension_system}, we do not construct privileged coordinates for the full system with state~$\bm{x}=[\bm{q}^\top\ \bm{v}^\top]^\top\in\R^{n_x}$.
Instead, we approximate the kinematic and dynamic components separately and subsequently combine the resulting approximations.
As previously shown, we derive the nilpotent approximation for the kinematic system~\eqref{eq:kinematics_nonholonomic_driftless} by regarding the velocity $\bm{v}$ as the input. Then, for a setpoint $\p\in\R^{n_q}$, weights~$\w=\bm{w}(\p) =[w_{1}(\p)\ \cdots\ w_{n_q}(\p)]$, and privileged coordinates~$\bm{z} \coloneqq \bm{z}(\bm{q};\p)$, satisfying the equality $\operatorname{ord}_{\p}(z_i(\bm{q};\p))=w_i(\bm{p})$, we construct the nilpotent approximation (cf.~Section~\ref{sec:Homogeneous Approximation for Kinematic Systems}).
According to Proposition~\ref{prop:homoApproxNilpotent}, this nilpotent approximation is a $(\bm{w}_{\bm{q}}, \bm{1}_{n_v}, 0 )$-homogeneous control system according to Definition~\ref{def:homogeneousSystem} with kinematics
\begin{align}
\label{eq:canonical_z_q}
    \dot{\bm{z}} = \bar{\bm{f}}(\bm{z}, \bm{v}).
\end{align}
The dynamics~\eqref{eq:dynamic_part_dynamic_system}, where $\bm{M}\!: \mathbb{R}^{n_q} \to \mathbb{R}^{n_v \times n_v}$ is a symmetric and positive definite matrix-valued function, is described by
\begin{equation}\label{eq:dynamic_part_dynamic_system_rearranged}
\dot{\bm{v}} =
    \bm{M}^{-1}(\bm{q}) \bm{B}(\bm{q}) \bm{u} \eqqcolon \widetilde{\bm{B}}(\bm{q})\bm{u},
\end{equation}
where $\bm{B}(\bm{q})\in\mathbb{R}^{n_v \times n_u}$ and $n_v = n_u$.
Then, the nilpotent approximation of the extension~\eqref{eq:dynamic_part_dynamic_system_rearranged} is formed at the setpoint~$\bm{p}$ by linearizing~\eqref{eq:dynamic_part_dynamic_system_rearranged} in the privileged coordinates~$\bm{z}$ of the kinematic subsystem. This results in 
\begin{equation}
    \dot{\bm{v}} = 
    \widetilde{\bm{B}}(\bm{\phi}^{-1}(\bm{0})) \bm{u}
    \eqqcolon \bar{\bm{B}}\bm{u}, 
\end{equation}
where $\bm{\phi}^{-1}(\z)=\bm{q}$ describes the reverse coordinate transformation.

Using the coordinates~$\widetilde{\bm{z}}(\x;\p) = [\bm{v}^\top \  \bm{z}^\top]^\top \in\mathbb{R}^{n_x}$, $n_x = n_q + n_v$, we define what we call the composed approximation via its dynamics
\begin{align}\label{eq:approximation_dynamic_extension}
    \dot{\widetilde{\bm{z}}} = \begin{bmatrix}
        \dot{\bm{v}} \\ \dot{\bm{z}} 
    \end{bmatrix} = \begin{bmatrix}
        \bm{0}_{n_v} \\
        \bar{\bm{f}}(\bm{z},\bm{v}) 
    \end{bmatrix} + \begin{bmatrix}
        \bar{\bm{B}} \\
        \bm{0}_{n_q\times n_v} 
    \end{bmatrix} \bm{u}.
\end{align}

\begin{proposition}
\label{prop:homogeneity_approx_sys_ext}
    We consider a kinematic system~\eqref{eq:kinematics_nonholonomic_driftless} with setpoint $\p\in\R^{n_q}$ and dynamic extension~\eqref{eq:dynamic_part_dynamic_system}, and let Assumption~\ref{ass:kin_sys_setpoint} hold.  The composed approximation for the full system defined above with dynamics~\eqref{eq:approximation_dynamic_extension} is homogeneous according to Definition~\ref{def:homogeneousSystem} with homogeneity parameters~$\bm{r} =  [\bm{1}_{n_v}^\top\ \bm{w}^\top]^\top$, $\bm{s} = \bm{1}_{n_u}$, and $\tau=0$.
\end{proposition}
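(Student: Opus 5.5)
We verify the homogeneity condition~\eqref{eq:homogeneity_condition_definition} directly, block by block. Write the composed vector field of~\eqref{eq:approximation_dynamic_extension} as $\widetilde{\bm{f}}(\widetilde{\bm{z}},\bm{u}) = [\,(\bar{\bm{B}}\bm{u})^\top \ \ \bar{\bm{f}}(\bm{z},\bm{v})^\top\,]^\top$. With $\bm{r}=[\bm{1}_{n_v}^\top\ \bm{w}^\top]^\top$, the dilation splits as $\bm{\Lambda}_\alpha = \operatorname{diag}(\alpha\bm{I}_{n_v},\bm{\Lambda}^{\bm{w}}_\alpha)$, where $\bm{\Lambda}^{\bm{w}}_\alpha=\operatorname{diag}(\alpha^{w_1},\ldots,\alpha^{w_{n_q}})$ is exactly the dilation used for the kinematic subsystem in Proposition~\ref{prop:homogeneity}. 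With $\bm{s}=\bm{1}_{n_u}$ we have $\bm{\Delta}_\alpha=\alpha\bm{I}_{n_u}$.

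\textbf{Velocity block.} Here $\bar{\bm{B}}=\widetilde{\bm{B}}(\bm{\phi}^{-1}(\bm{0}))$ is a constant matrix, so
\[
\bar{\bm{B}}(\alpha\bm{u})=\alpha\,\bar{\bm{B}}\bm{u}=\alpha^{0}\,\alpha^{1}\,\bar{\bm{B}}\bm{u}.
\]
This is the required identity, since each velocity component carries $r_i=1$ and $\tau=0$. The velocity block is independent of $\widetilde{\bm{z}}$, so the scaling of $\bm{v}$ and $\bm{z}$ plays no role here.

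\textbf{Position block.} The state scaling acts on $\bm{v}$ by $\alpha\bm{v}$. This matches the input scaling $\bm{\Delta}_\alpha$ with $\bm{s}=\bm{1}_{n_v}$ used in Proposition~\ref{prop:homogeneity}, where $\bm{v}$ was the input. We therefore invoke Proposition~\ref{prop:homogeneity} directly:
\[
\bar{\bm{f}}(\bm{\Lambda}^{\bm{w}}_\alpha\bm{z},\alpha\bm{v})=\bm{\Lambda}^{\bm{w}}_\alpha\,\bar{\bm{f}}(\bm{z},\bm{v}),
\]
which is the required identity with $\tau=0$. Stacking the two blocks gives $\widetilde{\bm{f}}(\bm{\Lambda}_\alpha\widetilde{\bm{z}},\bm{\Delta}_\alpha\bm{u})=\bm{\Lambda}_\alpha\widetilde{\bm{f}}(\widetilde{\bm{z}},\bm{u})$ for all $\alpha\geq0$. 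Finally, $\tau=0$ lies in the admissible range $(-\min_i r_i,\infty)$ because all $r_i\geq1$.

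\textbf{Main subtlety.} The only real point is bookkeeping: $\bm{v}$ changes role from input in the kinematic subsystem to state in the composed system. Its homogeneity weight must therefore move from $\bm{s}$ to $\bm{r}$, with the same value $1$, so that Proposition~\ref{prop:homogeneity} transfers verbatim. A second point is that $\bar{\bm{B}}$ is indeed constant (Assumption~\ref{ass:kin_sys_setpoint}(v) ensures it is well defined). Otherwise a $\bm{z}$-dependence would break the degree-$1$ scaling of the velocity block.
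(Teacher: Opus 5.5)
Your proposal is correct and follows essentially the same route as the paper. Both check the rows blockwise: the constant matrix $\bar{\bm{B}}$ gives the degree-one scaling of the velocity rows, and the position rows inherit $(\bm{w},\bm{1}_{n_v},0)$-homogeneity from Proposition~\ref{prop:homogeneity} because the state weight $1$ assigned to $\bm{v}$ equals its former input weight (your explicit check that $\tau=0$ lies in $(-\min_i r_i,\infty)$ is a small point the paper leaves implicit).
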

\begin{proof}
    Regarding the composed approximation
    \begin{align*}
        \begin{bmatrix}
        \dot{\bm{v}} \\ \dot{\bm{z}} 
    \end{bmatrix} = \begin{bmatrix}
        \bm{0}_{n_v} \\
        \bar{\bm{f}}(\bm{z},\bm{v}) 
    \end{bmatrix} + \begin{bmatrix}
        \bar{\bm{B}} \\
        \bm{0}_{n_q\times n_v} 
    \end{bmatrix} \bm{u}
    \end{align*}
    we write for the homogeneity parameters
    \begin{align}
    \label{eq:lambda}
        \bm{\Lambda}_{\alpha} =
        \begin{bmatrix}
            \alpha \bm{I}_{n_v} & \bm{0}_{n_v \times n_q} \\
             \bm{0}_{n_q \times n_v} & \tilde{\bm{\Lambda}}_{\alpha}\\
        \end{bmatrix}\in\mathbb{R}^{n_x \times n_x}
    \end{align}
    with $\alpha\in(0,1]$ and $\tilde{\bm{\Lambda}}_{\alpha} = \operatorname{diag}\{ \alpha^{r_{n_v+1}}, \dots, \alpha^{r_{n_x}}\} \in\mathbb{R}^{n_q \times n_q} $.
    Further, for $\bm{s}=\bm{1}_{n_v}$ it follows that $\bm{\Delta}_{\alpha} = \alpha \bm{I}_{n_u}=\alpha \bm{I}_{n_v}$, and we choose $\tau=0$.
    Now, we look at the dynamics of $\bm{v}$ and $\bm{z}$ separately and check the homogeneity condition~\eqref{eq:homogeneity_condition_definition}.\\   
    \emph{First case:} For rows $i\in\Z_{1:n_v}$ we regard the dynamics for all $\widetilde{z}_i$ with $ i\in\mathbb{Z}_{1:n_v}$ and check the homogeneity condition~\eqref{eq:homogeneity_condition_definition}. For $i\in\mathbb{Z}_{1:n_v}$, since the dynamics is linear in $\bm{u}$, we get
        \begin{align*}       \dot{\widetilde{z}}_i(\bm{\Lambda}_{\alpha}\widetilde{\bm{z}},\bm{\Delta}_{\alpha}\bm{u})=\bar{\bm{B}}^\top_i\,\bm{\Delta}_{\alpha}\bm{u}
            =\alpha \bar{\bm{B}}^\top_i\bm{u},
        \end{align*}
    with $\bar{\bm{B}}^\top_i$ denoting the $i$-th row of $\bar{\bm{B}}$. \\    
    \emph{Second case:} For rows $i\in\mathbb{Z}_{n_v+1:n_x}$ the dynamics are described by the nilpotent approximation~\eqref{eq:canonical_z_q}, i.e., the function $h_{i-n_v}(\bm{z},\bm{v})$. 
    We regard the velocity $\bm{v}$ as the input of the kinematic subsystem and its nilpotent approximation, for which it was assumed that $\bm{s}=\bm{1}_{n_v}$, cf.~Prop.~\ref{prop:homogeneity}. Since the first $n_v$ entries of the homogeneity parameters $\bm{r}$ of the composed approximation as a whole are equal to $1$, we can proceed analogously, with $\bm{\Delta}_{\alpha}=\alpha \bm{I}_{n_v}$ equal to the first $n_v$ rows and columns of $\bm{\Lambda}_{\alpha}$, cf.~\eqref{eq:lambda}. 
    Then, it follows that 
        \begin{align*}         \dot{\widetilde{z}}_{i}(\bm{\Lambda}_{\alpha}\widetilde{\bm{z}},\bm{\Delta}_{\alpha}\bm{u})
        &=\bar{\bm{f}}_{i-n_v}(\tilde{\bm{\Lambda}}_{\alpha}\z,\bm{\Delta}_{\alpha}\bm{v})\\
        &=\alpha^{w_{i-n_v}}\bar{\bm{f}}_{i-n_v}(\z)
        \end{align*}
    holds.
    
    Therefore, for all $i\in\Z_{1:n_x}$, the system is $(\bm{r},\bm{1}_{n_v},0)$-homogeneous, with \begin{align*}
        r_i&=\begin{cases}
            1,&i\in\Z_{1:n_v}\\
            w_{i-n_v},&i\in\Z_{n_v+1:n_x}
        \end{cases}.
    \end{align*}
    Thus, the composed approximation is $(\bm{r},\bm{1}_{n_u},0)$-homogeneous, according to Definition~\ref{def:homogeneousSystem}, and $\widetilde{\bm{w}}~=~[\bm{1}_{n_v}^\top\ \bm{w}^\top]^\top$ is equivalent to the parameters~$\bm{r}$.
\end{proof}

\begin{proposition}\label{prop:extendedSystemApprox_homApprox}
 We consider a kinematic system~\eqref{eq:kinematics_nonholonomic_driftless} with setpoint $\p\in\R^{n_q}$ and dynamic extension~\eqref{eq:dynamic_part_dynamic_system}, and let Assumption~\ref{ass:kin_sys_setpoint} hold.  Then, there exists a composed approximation for the full system, and its dynamics~\eqref{eq:approximation_dynamic_extension} form a $(\bm{r},\bm{1}_{n_v},0)$-homogeneous approximation of the system with dynamic extension~\eqref{eq:dynamic_extension_system} in coordinates $\widetilde{\z}=[\bm{v}^{\top}\,\z^{\top}]^{\top}$ in its origin, according to Definition~\ref{def:homogeneous_approximation}, with homogeneity parameters $\bm{r} = [\bm{1}_{n_v}^\top,\bm{w}^\top]^\top$, with the residuum bound by the parameters  $\eta=1$ and $\rho,M>0$. 
\end{proposition}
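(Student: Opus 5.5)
Homogeneity of the composed approximation~\eqref{eq:approximation_dynamic_extension} with parameters $\bm{r}=[\bm{1}_{n_v}^\top\ \bm{w}^\top]^\top$, $\bm{s}=\bm{1}_{n_u}$, $\tau=0$ is already given by Proposition~\ref{prop:homogeneity_approx_sys_ext}. It therefore only remains to bound the residual $\widetilde{\bm{R}}(\widetilde{\z},\bu)$ in the sense of~\eqref{eq:homogeneousApproximationResiduum} with $\eta=1$. Since the full system in coordinates $\widetilde{\z}=[\bm{v}^\top\ \z^\top]^\top$ reads $\dot{\bm{v}}=\widetilde{\bm{B}}(\bm{\phi}^{-1}(\z))\bu$ and $\dot{\z}=\bm{f}(\z,\bm{v})$, the residual splits blockwise:
\begin{align*}
\widetilde{\bm{R}}(\widetilde{\z},\bu)=\begin{bmatrix}\bigl(\widetilde{\bm{B}}(\bm{\phi}^{-1}(\z))-\bar{\bm{B}}\bigr)\bu\\ \bm{R}(\z,\bm{v})\end{bmatrix}.
\end{align*}
I treat the two blocks separately and take the minimum of the radii and the maximum of the constants.

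\emph{Kinematic rows} $i\in\Z_{n_v+1:n_x}$: the scaling $\bm{\Lambda}_\alpha$ acts on $\bm{v}$ by $\alpha\bm{I}_{n_v}$, which coincides with the input scaling $\bm{\Delta}_\alpha=\alpha\bm{I}_{n_v}$ used for the kinematic subsystem. Proposition~\ref{prop:homoApproxNilpotent} then applies verbatim and gives $|R_{i-n_v}(\tilde{\bm{\Lambda}}_\alpha\z,\alpha\bm{v})|\le M_1\alpha^{w_{i-n_v}+1}$ for $\lVert\z\rVert,\lVert\bm{v}\rVert\le\rho_1$. Since $r_i=w_{i-n_v}$ and $\tau=0$, this is exactly the required exponent $r_i+\tau+\eta$. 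One small point needs care: $\lVert\widetilde{\z}\rVert\le\rho$ implies both $\lVert\z\rVert\le\rho$ and $\lVert\bm{v}\rVert\le\rho$, so choosing $\rho\le\rho_1$ suffices.

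\emph{Dynamic rows} $i\in\Z_{1:n_v}$: the required bound is $M\alpha^{1+0+1}=M\alpha^2$. By Assumption~\ref{ass:kin_sys_setpoint}(v) and since $\bm{\phi}^{-1}$ is an analytic local diffeomorphism (Bellaïche's construction yields polynomial or analytic maps), the map $\z\mapsto\widetilde{\bm{B}}(\bm{\phi}^{-1}(\z))$ is analytic near $\bm{0}$. It is therefore locally Lipschitz, so $\lVert\widetilde{\bm{B}}(\bm{\phi}^{-1}(\z))-\bar{\bm{B}}\rVert\le L\lVert\z\rVert$ on a ball of radius $\rho_2$. For $\alpha\in(0,1]$ and $w_k\ge1$ we have $\lVert\tilde{\bm{\Lambda}}_\alpha\z\rVert\le\alpha\lVert\z\rVert$, because every entry is scaled by $\alpha^{w_k}\le\alpha$. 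Hence
\begin{align*}
\bigl|\widetilde{R}_i(\bm{\Lambda}_\alpha\widetilde{\z},\alpha\bu)\bigr|\le L\,\alpha\lVert\z\rVert\cdot\alpha\lVert\bu\rVert\le L\rho^2\alpha^2 .
\end{align*}
This gives the bound with $\eta=1$ and $M_2=L\rho^2$.

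Finally I set $\rho=\min\{\rho_1,\rho_2\}$, $M=\max\{M_1,M_2\}$ and $\eta=1$, which completes the argument. The main obstacle is the dynamic rows: the naive linearization error could be of order $\alpha$ rather than $\alpha^2$. The argument relies on the fact that the $\z$-scaling contracts by at least $\alpha$, which holds because all weights satisfy $w_k\ge1$. That fact, together with the analyticity and Lipschitz regularity of $\widetilde{\bm{B}}\circ\bm{\phi}^{-1}$ at the origin, needs to be stated explicitly.
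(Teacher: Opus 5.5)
Your proof is correct and follows essentially the same route as the paper. Like the paper, you split the residual blockwise, reuse Proposition~\ref{prop:homoApproxNilpotent} for the kinematic rows, and get the $\alpha^{2}$ bound in the dynamic rows from linearity in $\bu$ together with $w_k\geq 1$. The only difference is that you use the Lipschitz estimate $\lVert\widetilde{\bm{B}}(\bm{\phi}^{-1}(\z))-\bar{\bm{B}}\rVert\leq L\lVert\z\rVert$ with $\lVert\tilde{\bm{\Lambda}}_\alpha\z\rVert\leq\alpha\lVert\z\rVert$, where the paper uses a term-wise Taylor-series estimate. Your version is slightly cleaner, since it avoids pulling $\alpha$ out of a signed series, and it needs only $C^1$ regularity.
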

\begin{proof}
As in the proof for Proposition~\ref{prop:homogeneity_approx_sys_ext}, we regard the approximations of $\bm{z}$ and $\bm{v}$ as well as their respective residual separately.\\
\emph{First case:} For rows $i\in\Z_{1:n_v}$ of the composed approximation 
%
the residual is defined as $ R_{i}(\widetilde{\z},\bu)=R^{v}_{i}(\widetilde{\z},\bu)$, where
\begin{align*}
    \bm{R}^{v}(\widetilde{\z},\bu)&=\widetilde{\bm{B}}(\bm{\phi}^{-1}(\bm{z}))\bu -\bar{\bm{B}}\bu.
\end{align*}
We assume that $\eta=1$ and $\rho\in(0,1)$. 
The residual consists of the non-constant terms of the Taylor series of $\widetilde{\bm{B}}(\bm{\phi}^{-1}(\bm{z}))$, which we write as
\begin{align*}
    R_{i}(\widetilde{\z},\bu)&=R_{i}(\z,\bu)=\sum_{j=1}^{n_v}\sum_{k=0}^{\infty}c_{i,j,k}\z^{\bm{\nu}_{i,j,k}}u_j,
\end{align*}
for $i\in\Z_{1:n_v}$, with $\bm{\nu}_{i,j,k}\in\N_{0}^{n_q}$ denoting the multi-index and $c_{i,j,k}~\in~\R$ the coefficients of the Taylor series. 

We define $\tilde{\Lambda}_{\alpha}\coloneqq\diag(\alpha^{w_1},\ldots,\alpha^{w_{n_v}})$.
Then 
\begin{align*}
   \lvert R_{i,j}(\Lambda_{\alpha}\widetilde{\z},\Delta_{\alpha}\bu)\rvert&=\left\lvert \sum_{k=0}^{\infty}c_{i,j,k}(\tilde{\Lambda}_{\alpha}\z)^{\bm{\nu}_{i,j,k}}\alpha u_j\right\rvert\\
   &= \alpha \left\lvert \sum_{k=0}^{\infty}c_{i,j,k}\alpha^{(\bm{\nu}_{k}^{\top}\bm{w})}\z^{\bm{\nu}_{i,j,k}}u_j\right\rvert\\
   &\leq \alpha^2\underbrace{\left\lvert \sum_{k=0}^{\infty}c_{i,j,k}\z^{\bm{\nu}_{i,j,k}}u_j\right\rvert}_{\leq C_{i,j}>0}
\end{align*}
holds, with the constant $C_{i,j}>0$, since $\bm{\nu}_{k}^{\top}\bm{w} \geq 1$ holds, $\widetilde{\bm{B}}(\bm{\phi}^{-1}(\cdot))$ is analytic, and we are on a compact set defined by $\lVert \widetilde{\z} \rVert,\lVert \bu\rVert\leq\rho$. 
We define
\begin{align*}
    M_v\coloneqq\max_{i,j\in\mathbb{Z}_{1:n_v}}C_{i,j}.
\end{align*}
Then, for $i\in\Z_{1:n_v}$, 
\begin{align*}
    \lvert R_i(\Lambda_\alpha \widetilde{\z},\Delta_\alpha\bu) \rvert &\leq \alpha^{2}n_vM_v
\end{align*}
holds.

\emph{Second case:} For rows $i\in\Z_{n_v+1:n_x}$ of the composed approximation the residual is equivalent to the residual of the nilpotent approximation of the kinematic subsystem. Therefore, we can use the same bound. We follow from Proposition~\ref{prop:homoApproxNilpotent} that for $i\in\Z_{n_v+1:n_x}$
\begin{align*}
    \lvert R_i(\Lambda_{\alpha}\widetilde{\z},\Delta_{\alpha}\bu)\rvert&\leq \alpha^{r_{i}+1}C_i,\quad \forall\lVert\widetilde{\z}\rVert,\lVert\bu\rVert \leq\rho
\end{align*} 
holds, with $\eta=1$, $C_i>0$, and $\rho\in(0,1)\cap(0,R)$, where $R>0$ denotes the convergence radius of the Taylor series of $h(\z)$. We define
\begin{align*}
    M_{z}\coloneqq\max_{i\in \mathbb{Z}_{n_v+1:n_x}}C_i.
\end{align*}
Then, the residuum for the composed approximation is bounded, with $M\coloneqq\max\{M_v,M_{z}\}$, $\rho\in(0,1)\cap(0,R)$, and $\eta=1$, which concludes the proof.
\end{proof}
\begin{proof}[Proof of Proposition~\ref{prop:homogeneous_approx_w_dyn_ext}]
According to Proposition~\ref{prop:extendedSystemApprox_homApprox}, the residuum of the composed approximation of a kinematic system with dynamic extension is bounded on an open, bounded and convex set around the setpoint~$\widetilde{\bm{p}}$. 
Further, since the nilpotent approximation $\dot{\z}=\bar{\bm{f}}(\z,\bm{v})$ (globally) fulfills the LARC~\cite{jean_control_2014}, the composed approximation~\eqref{eq:approximation_dynamic_extension} satisfies it as well because the additional input dynamics are fully actuated. 
This directly implies null controllability.
Therefore,  it is proven that the composed approximation fulfills the conditions of Proposition~\ref{prop:homogeneous_approx_w_dyn_ext}.    
\end{proof}

\section{Conclusions}
\label{sec:Conclusion}
The present paper establishes a systematic procedure for designing MPC controllers without terminal ingredients that locally asymptotically stabilize equilibria of nonholonomic systems.
The key design of the tailored stage costs follows directly from intrinsic mechanical properties of the system, namely the nonholonomic weights associated with the input distributions and the corresponding privileged coordinates, which can be obtained, for example, via Bella\"iche's algorithm.
The main contribution is the proof that the canonical nilpotent approximation is homogeneous with homogeneity parameters given by the nonholonomic weights, together with a bound on the associated approximation error.
These results eliminate the need for previously required lengthy technical computations and yield a straightforward design procedure that is readily applicable in practice.
Further, the mixed-exponents costs resulting from this design procedure have a natural mechanical interpretation as its state-dependent part is locally closely related to the sub-Riemannian distance, which captures the distance between two configurations of a nonholonomic system.
Beyond the conventional kinematic setting, the framework is generalized to nonholonomic kinematics with dynamic extensions.
Local asymptotic stability of the loop closed under the MPC feedback law is established for this system class, thereby broadening the applicability of the framework to a wider class of dynamical systems subject to nonholonomic constraints.
The effectiveness of the proposed controller is demonstrated throughout the paper using an articulated vehicle as an example.
In this way, this work answers several open questions raised in previous studies, see, e.g.,~\cite{RoseEbel22,ebel_note_2023}, and further strengthens the connection between sub-Riemannian geometry and homogeneous control systems in the context of MPC.
\appendix
\section{Appendix}
\subsection{Bella\"iche's Algorithm}

Bella\"iche's algorithm for deriving a set of privileged coordinates for the nonholonomic kinematic system~\eqref{eq:kinematics_nonholonomic_driftless}, provided that Ass.~\ref{ass:kin_sys_setpoint} holds, is presented in Algorithm~\ref{alg:AlgorithmBellaiche}.
For further details, we refer the interested reader to~\cite{jean_control_2014,bellaiche1996tangent}.

\label{subsec:BA}
\begin{algorithm}[!htbp]
\renewcommand{\algorithmicdo}{}
\caption{Bella\"{i}che's algorithm} 
\label{alg:AlgorithmBellaiche}

\begin{algorithmic}[1]
\small
\STATE{\bf input:} \begin{alglist}
        \item nonholonomic kinematics \begin{align*}\dot{\bm{q}} = \sum_{j=1}^{n_v} \bm{g}_j(\bm{q}) v_j \eqqcolon \bm{G}(\bm{q}) \bm{v}\end{align*}
        \item setpoint $\bm{p}=\bm{0}$, if necessary, shift the coordinates (which is always possible, since the system is driftless)
        \item the associated distributions and nonholonomic weights $\bm{w}=\bm{w}(\bm{p})$    
        \end{alglist}
\STATE choose a so-called adapted frame of vector fields $\bm{X}_i$, $i \in \Z_{1:n_q}$,  where
\begin{alglist}
\item $\bm{X}_i\in\Delta^{w_i}$, $i \in \Z_{1:n_q}$,
\item $\operatorname{span} \{\bm{X}_1(\bm{p}), \ldots, \bm{X}_{n_q}(\bm{p})\} = \Delta^r(\bm{p})$
\end{alglist}
\STATE compute coordinates $\bm{y}(\bm{q};\bm{p})=\bm{A}^{-1} \bm{q}$, where \\$\bm{A}\coloneqq \left. \left[\bm{X}_1(\bm{q})\ \cdots\ \bm{X}_{n_q}(\bm{q})\right]\right|_{\bm{q}=\bm{0}}$
\STATE compute the vector fields $\bm{Y}_i(\bm{y}) = (\bm{A}^{-1} \bm{X}_i\bm{A})\bm{y}$,\\
$i \in \Z_{1:n_q}$
\FOR{\(j=1,\dots,n_q\)} \label{algStep:forLoopStart}
    \FOR{\(k=2,\dots,w_j-1\)}
        \STATE compute $h_{j,k}(y_1,\dots,y_{j-1})$ as follows
            \begin{alglist}
                \item[\hspace{2.1em}I.\phantom{II}\hspace{-2em}]
                determine the multi-index set
                \begin{align*}\widetilde{I}_{\nu}(j,k)\coloneqq&\begin{Bmatrix}\bm{\nu}\in\N_{0}^{n_q}:\bm{\nu}^\top\bm{w}-w_j<0,\\
                \nu_i=0\  \forall\,i\in\Z_{j+1:n_q}, \sum_{i=1}^{j-1} \nu_i=k \end{Bmatrix}\end{align*}
                \item[\hspace{2.1em}II.\phantom{I}\hspace{-2em}]
                for each $\bm{\nu} \in \widetilde{I}_{\nu}(j,k)$, compute\\
                $c_{\bm{\nu}} = \left. \mathcal{L}_{\bm{Y}_1}^{\nu_1}\left(\dots\left(\mathcal{L}_{\bm{Y}_{j-1}}^{\nu_{j-1}}\left( y_j -\sum_{i=2}^{k-1}h_{j,i}\right)\right)\right)\right|_{\bm{y}=\bm{0}}$
                \item[\hspace{2.1em}III.\hspace{-2em}]
                compute
                \begin{align*}h_{j,k}(y_1,\dots,y_{j-1}) = \sum_{\bm{\nu} \in \widetilde{I}_{\nu}(j,k)}
                c_{\bm{\nu}}
                \bm{y}^{\bm{\nu}} \left(\prod_{i=1}^{j-1}\nu_i!\right)^{-1}\end{align*}
            \end{alglist}
    \ENDFOR \label{algStep:forLoopEnd}
    \STATE $\zeta_j(\bm{y};\bm{p})=y_j-\sum_{k=2}^{w_j-1}h_{j,k}(y_1,\dots,y_{j-1})$
    \STATE $z_j(\bm{q};\bm{p})=\zeta_j(\bm{A}^{-1} \bm{q};\bm{p})$
\ENDFOR
\STATE {\bf output:} a system of privileged coordinates~$\bm{z}$
\end{algorithmic}
\end{algorithm}

\subsection{Auxiliary Results on Nonholonomic Orders}
\label{app:Lemmata}
\begin{lem}[\!\!\cite{jean_control_2014}, p.\ 16]
\label{lem:operator_nonhol_order_extra}
Let $\p\in\R^{n}$ be a point in the state space.
\begin{enumerate}[(i)]
\item   For two functions $f_1,f_2\in C^{\infty}(\p)$ the following statements hold:
    \begin{align}
        \operatorname{ord}_{\p}(f_1+f_2)&\geq\min\{\operatorname{ord}_{\p}(f_1),\operatorname{ord}_{\p}(f_2)\}\\
    \label{eq:equality_nonh_ord_mult}
        \operatorname{ord}_{\p}(f_1\cdot f_2)&=\operatorname{ord}_{\p}(f_1)+\operatorname{ord}_{\p}(f_2).
    \end{align}   
\item The nonholonomic order cannot be changed through scalar multiplication, meaning that for all functions $f\in C^{\infty}(\p)$ and all vector fields $\bm{g}\in C^{\infty,n}(\p)$, with scalar $\alpha\in\R\setminus\{0\}$,
\begin{align*}
   \operatorname{ord}_{p}(\alpha f)&=\operatorname{ord}_{p} (f),\\
   \operatorname{ord}_{p} (\alpha \bm{g}) &= \operatorname{ord}_{p} (\bm{g}),
\end{align*}
holds.

\item For vector fields $\bm{g}_1,\bm{g}_2\in C^{\infty,n}(\p)$ it holds that
\begin{align}  \label{eq:nonh_ord_bracket}
\operatorname{ord}_{\p}(\bm{g}_1+\bm{g}_2)&\geq \min\{\operatorname{ord}_{\p}(\bm{g}_1),\operatorname{ord}_{\p}(\bm{g}_2)\}.
\end{align}
    \end{enumerate}
\end{lem}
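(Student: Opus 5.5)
The plan is to work directly from the recursive definition of the nonholonomic order of a function: $\operatorname{ord}_{\p}(f)\ge k$ holds iff every nonholonomic derivative $\mathcal{L}_{\bm{g}_{j_1}}\cdots\mathcal{L}_{\bm{g}_{j_s}}f$ with $s<k$ vanishes at $\p$. Everything except the equality in the product rule is a linearity or Leibniz argument; that equality is the only genuinely nontrivial step.

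\emph{Statement (ii) and the sum rule in (i).} For a nonzero scalar $\alpha$, each nonholonomic derivative of $\alpha f$ equals $\alpha$ times the corresponding derivative of $f$. Hence the derivatives of $\alpha f$ and of $f$ vanish at $\p$ for exactly the same index sequences, which gives $\operatorname{ord}_{\p}(\alpha f)=\operatorname{ord}_{\p}(f)$. Then $\mathcal{L}_{\alpha\bm{g}}f=\alpha\mathcal{L}_{\bm{g}}f$, so by Definition~\ref{def:nonhol_ord_vec_fields_and_operators} the admissible sets of $s$ for $\alpha\bm{g}$ and for $\bm{g}$ coincide, and their suprema agree. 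For the sum rule, each Lie derivative is linear, so every nonholonomic derivative of $f_1+f_2$ of order $s$ is the sum of the corresponding derivatives of $f_1$ and $f_2$. Both vanish at $\p$ whenever $s<\min\{\operatorname{ord}_{\p}(f_1),\operatorname{ord}_{\p}(f_2)\}$. The conventions $\operatorname{ord}_{\p}(0)=\infty$ and $\min\{\cdot,\infty\}$ are handled trivially.

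\emph{Statement (iii).} Fix $f\in C^\infty(\p)$ and let $s_i<\operatorname{ord}_{\p}(\bm{g}_i)$ for $i=1,2$. Then $\mathcal{L}_{\bm{g}_1+\bm{g}_2}f=\mathcal{L}_{\bm{g}_1}f+\mathcal{L}_{\bm{g}_2}f$. Applying the sum rule from (i) gives
\begin{align*}
\operatorname{ord}_{\p}(\mathcal{L}_{\bm{g}_1+\bm{g}_2}f)\ge \min\{s_1,s_2\}+\operatorname{ord}_{\p}(f).
\end{align*}
So every $s<\min_i\operatorname{ord}_{\p}(\bm{g}_i)$ is admissible in Definition~\ref{def:nonhol_ord_vec_fields_and_operators}, and taking the supremum yields (iii).

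\emph{Product rule, inequality $\ge$.} Set $k_1=\operatorname{ord}_{\p}(f_1)$ and $k_2=\operatorname{ord}_{\p}(f_2)$. Iterating the Leibniz rule $\mathcal{L}_{\bm{g}}(f_1f_2)=(\mathcal{L}_{\bm{g}}f_1)f_2+f_1\mathcal{L}_{\bm{g}}f_2$, the derivative $\mathcal{L}_{\bm{g}_{j_1}}\cdots\mathcal{L}_{\bm{g}_{j_s}}(f_1f_2)$ becomes a sum over all splittings of the ordered index sequence $(j_1,\dots,j_s)$ into two complementary subsequences. Each term is the product of a nonholonomic derivative of $f_1$ of length $a$ and one of $f_2$ of length $s-a$. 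If $s<k_1+k_2$, then either $a<k_1$ or $s-a<k_2$, so every term vanishes at $\p$. This gives $\operatorname{ord}_{\p}(f_1f_2)\ge k_1+k_2$.

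\emph{Product rule, inequality $\le$ (main obstacle).} Cancellations in the Leibniz sum rule out a direct argument, so I would instead use an analytic characterization of the order. Fix privileged coordinates $\z$ at $\p$, which exist under Assumption~\ref{ass:kin_sys_setpoint}, and let $\Lambda_\varepsilon=\diag(\varepsilon^{w_1},\dots,\varepsilon^{w_{n_q}})$. The claim to establish is
\begin{align*}
\operatorname{ord}_{\p}(f)=k \iff f(\Lambda_\varepsilon\z)=\varepsilon^{k}f^{(k)}(\z)+o(\varepsilon^{k}) \text{ with } f^{(k)}\not\equiv0,
\end{align*}
that is, the order equals the lowest weighted degree occurring in the Taylor series of $f$ in $\z$. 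I would follow~\cite{jean_control_2014,bellaiche_canonical_1993} and first show that a monomial $\z^{\bm\nu}$ has order at least $\omega(\bm\nu)$, which follows from the $\ge$ part just proved. The delicate direction is that the lowest weighted-homogeneous part actually detects the order. For this I would use the dilated vector fields $\varepsilon\Lambda_\varepsilon^{-1}\bm{Z}_j(\Lambda_\varepsilon\cdot)\to\bm{Z}_j^{[-1]}$. Nonholonomic derivatives of length $k$ evaluated at $\p$ can then be computed with the nilpotent (homogeneous polynomial) fields acting on $f^{(k)}$. Nonvanishing of one of them follows because the $\bm{Z}_j^{[-1]}$ satisfy the LARC and generate all weighted-homogeneous polynomial directions of degree $k$. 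I must take care here not to use the paper's own derivation of $\operatorname{ord}_{\p}(z_i)=w_i$, which cites this lemma, so as to avoid circularity. Once the characterization is available, $(f_1f_2)^{(k_1+k_2)}=f_1^{(k_1)}f_2^{(k_2)}\not\equiv0$, because the polynomial ring has no zero divisors, and equality follows.
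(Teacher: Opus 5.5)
Your arguments for (ii), (iii), the sum rule and the inequality $\operatorname{ord}_{\p}(f_1f_2)\geq\operatorname{ord}_{\p}(f_1)+\operatorname{ord}_{\p}(f_2)$ are the same linearity and Leibniz arguments the paper uses. You are right that the paper's one-line justification of the product rule only yields this inequality, because for words of length $k_1+k_2$ the surviving Leibniz terms could cancel.

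\textbf{The gap.} Your route to the reverse inequality has a circular step, and it is not the one you guarded against. The convergence $\varepsilon\Lambda_\varepsilon^{-1}\bm{Z}_j(\Lambda_\varepsilon\,\cdot)\to\bm{Z}_j^{[-1]}$ requires that no entry $Z_{i,j}$ contains a monomial of weighted degree $<w_i-1$, i.e.\ \eqref{eq:weighted_degree_geq-1}. The definitions only give $\operatorname{ord}_{\p}(Z_{i,j})=\operatorname{ord}_{\p}(\mathcal{L}_{\bm{Z}_j}z_i)\geq w_i-1$. Turning a lower bound on the nonholonomic order into a statement about Taylor monomials is exactly the direction ``order $\geq m$ $\Rightarrow$ no monomials of weighted degree $<m$'' that you are trying to prove. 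In the paper, \eqref{eq:weighted_degree_geq-1} goes through Lemma~\ref{lem:weighted_deg_eq_nonhol_ord}, which uses the product equality itself. The dependence you flagged, $\operatorname{ord}_{\p}(z_i)=w_i$, is harmless, since it is the defining property of privileged coordinates. The LARC of the $\bm{Z}_j^{[-1]}$ also relies on that expansion. Your last step needs an explicit induction as well: if all length-$k$ compositions of the $\bm{Z}_j^{[-1]}$ annihilate a homogeneous $P$ of degree $k\geq 1$, then each $\bm{Z}_j^{[-1]}P$ is annihilated by all length-$(k-1)$ compositions. Hence $\bm{Z}_j^{[-1]}P\equiv 0$ for all $j$, $P$ is constant on the open orbit of $0$, and so $P\equiv 0$.

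\textbf{Two repairs.} Within your route, prove the characterization by induction on $k$. At stage $k$ only the rows with $w_i\leq k+1$ need weighted order $\geq -1$, and the induction hypothesis applied to $Z_{i,j}$ supplies this. One then checks that low-degree terms in heavier rows cannot contribute to length-$k$ derivatives at $\p$ of a function whose lowest part has degree $k$.

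A much lighter fix completes the paper's own Leibniz computation. For a word $u=(j_1,\dots,j_s)$ write $\mathcal{L}_u\coloneqq\mathcal{L}_{\bm{g}_{j_1}}\cdots\mathcal{L}_{\bm{g}_{j_s}}$. For words of length $k_i=\operatorname{ord}_{\p}(f_i)$ set $F_i(u)\coloneqq(\mathcal{L}_u f_i)(\p)$. For a word $u$ of length $k_1+k_2$ only splittings with $|S|=k_1$ survive, so $(\mathcal{L}_u(f_1f_2))(\p)=\sum_{|S|=k_1}F_1(u|_S)\,F_2(u|_{S^{c}})$. This is the shuffle product of $F_1$ and $F_2$ evaluated at $u$. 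Both factors are nonzero by definition of the order. Over $\R$ the shuffle algebra is an integral domain, since by Radford's theorem it is a polynomial algebra on Lyndon words. Hence some word gives a nonzero value, and $\operatorname{ord}_{\p}(f_1f_2)\leq k_1+k_2$. This needs neither privileged coordinates, the nilpotent approximation, the LARC, nor analyticity.
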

\begin{proof}
The statements follow directly from the definition of the nonholonomic order for functions and vector fields (cf. Sec.~\ref{subsec:Systematic Design Procedure for Kinematic Systems}, Definition~\ref{def:nonhol_ord_vec_fields_and_operators}).
\begin{enumerate}[(i)]
\item Since there is a product rule for Lie derivatives, i.e., \begin{align*}
 \mathcal{L}_{\bm{g}}(f_1\cdot f_2)=f_1\cdot\mathcal{L}_{\bm{g}}(f_2)+f_2\cdot\mathcal{L}_{\bm{g}}(f_1),   
\end{align*} and a nonzero term only occurs when both functions have been derived at least as often as their nonholonomic order denotes, the equality~\eqref{eq:equality_nonh_ord_mult} follows directly from the definition of the nonholonomic order of functions (cf. Sec.~\ref{subsec:Systematic Design Procedure for Kinematic Systems}).
\item Since the nonholonomic order of a function is defined by wether its nonholonomic derivatives vanish at $\p$, and a scalar factor is not impacted by Lie derivation, the scalar factor cannot influence its nonholonomic order. The same holds for vector fields.
\item By definition, we get \begin{align*}
       &\operatorname{ord}_{\p}(\bm{g}_1+\bm{g}_2)\\   &=\sup\begin{Bmatrix}s\in\R:&\operatorname{ord}_{\p}(\mathcal{L}_{\bm{g}_1+\bm{g}_2}f)\geq s+\operatorname{ord}_{\p}(f)\\ & \forall f\in C^\infty (\p)\end{Bmatrix}.
   \end{align*}
   For the Lie derivative,
\begin{align*}\mathcal{L}_{\bm{g}_1+\bm{g}_2}f&=\mathcal{L}_{\bm{g}_1}f+\mathcal{L}_{\bm{g}_2}f
   \end{align*}
   holds \cite{coron_control_2009}. Then a nonholonomic derivative of $\mathcal{L}_{\bm{g}_1}f+\mathcal{L}_{\bm{g}_2}f$ is nonzero in $\p$ only when the same holds for either $\mathcal{L}_{\bm{g}_1}f$ or $\mathcal{L}_{\bm{g}_2}f$, and therefore \eqref{eq:nonh_ord_bracket} holds.
\end{enumerate}%
\end{proof}
\subsection{Local Equivalence of the State-Dependent Part of the MPC Stage Costs to the Sub-Riemannian Distance}
\label{app:equivalence_subRiem_cost}
Following~\cite{jean_control_2014}, given a set of privileged coordinates $\bm{z}(\bm{q};\bm{p})$, there exist constants $C_{\p}, \varepsilon_{\p}>0$ such that for $d_{\textnormal{R}}(\bm{q},\p) < \varepsilon_{\p}$,
\begin{align*}
    \frac{1}{C_{\p}} \sum_{i=1}^{n_q}\lvert z_i(\bm{q},\p) \rvert^{\frac{1}{w_i}}
    \leq d_{\textnormal{R}}(\bm{q},\p)
    \leq C_{\p} \sum_{i=1}^{n_q}\lvert z_i(\bm{q},\p) \rvert^{\frac{1}{w_i}},
\end{align*}
where $d_{\textnormal{R}}(\bm{q},\p)$ denotes the sub-Riemannian distance between $\bm{q}$ and $\p$. 
Using the equivalence between $r$-homogeneous $p$-norms as stated in~\cite{zhang2022homogeneous},
it follows that, for $d \geq 1$,
\begin{align*}
\sum_{i=1}^{n_q} |z_i(\bm{q},\p)|^{\frac{1}{w_i}}
\sim
\left(\sum_{i=1}^{n_q} |z_i(\bm{q},\p)|^{\frac{d}{w_i}}\right)^{1/d}.
\end{align*}
Consequently, the state-dependent part of the stage costs is locally equivalent to $d_{R}(\bm{q},\bm{p})^d$, and thus, up to taking the $d$-th root, to the sub-Riemannian distance itself, i.e.,
\begin{align*}
\left( \sum_{i=1}^{n_q}\lvert z_i(\bm{q},\p) \rvert^{\frac{d}{w_i}} \right) ^{\frac{1}{d}} \sim d_{\textnormal{R}}(\bm{q},\p).
\end{align*}
The same local equivalence can also be inferred from~\cite{bellaiche1996tangent}.

\subsection{Remarks on the Nilpotent Approximation and Examples}\label{app:example}
%
\begin{rem}
    For a two-input nilpotent approximation in chained form given by
    \begin{align*}
        \dot{z}_i
        &=
        \begin{cases}
            v_i, & i\in\Z_{1:2},\\
            z_{i-1}v_1, & i\in\Z_{3:n_q},
        \end{cases}
    \end{align*}
    the homogeneity
    condition can be verified directly. With $w_1=w_2=1$, for $\bm{r}=\bm{w}$, $\bm{s}=\bm{1}_2$, and $\tau=0$,
    the components
    $i\in\Z_{1:2}$ satisfy the condition because $s_i=r_i=w_i=1$. For
    $i\in\Z_{3:n_q}$, the homogeneity condition reads
    \begin{align*}
        \bar{f}_i(\Lambda_\alpha\bm{z},\Delta_\alpha\bm{v})
        =
        \alpha^{w_{i-1}+s_1}z_{i-1}v_1
        =
        \alpha^{w_i}z_{i-1}v_1,
    \end{align*}
    since $s_1=1$ and $w_i=w_{i-1}+1$.
    Thus, the chained-form approximation is
    \((\bm w,\bm{1}_2,0)\)-homogeneous.
\end{rem}

\begin{rem}
For the nilpotent approximation~\eqref{eq:nilpotent_approximation_kinematics}, since $w_i=1$ holds for all $i\in\Z_{1:n_v}$, and Equation~\eqref{eq:weighted_degree_geq-1} must be fulfilled for all monomials in the functions $Z^{[-1]}_{i,j}$, it follows that the corresponding  multi-indices~$\bm{\nu}$ must all be equal to $\bm{0}_{n_q}$ for the first $n_v$ rows.
Furthermore, if we assume that for a monomial in $Z_{i,j}^{[-1]}$, $i\in\Z_{1:n_q}$, that there is a $j\geq i$ for which $\nu_j >0$ holds, it follows that $w(\bm{\nu},\p)\geq w_i$ holds, which contradicts \eqref{eq:weighted_degree_geq-1}. Therefore, the functions $Z_{i,j}^{[-1]}$ are only dependent upon the variables $z_1,\ldots,z_{i-1}$ for all $i\in\Z_{1:n_q}$.
Consequently, the nilpotent approximation is of triangular shape,
    \begin{align}\label{eq:triangularShape}
        \dot{z}_i &= \begin{cases} v_i,  & i\in\Z_{1:n_v},\\
        \sum_{j=1}^{n_v}Z_{i,j}^{[-1]}(z_1,\ldots,z_{i-1}) \,v_j, & i\in\Z_{n_v+1:n_q}.\end{cases}
    \end{align}  
See also \cite[p. 25]{jean_control_2014} and~\cite{bellaiche_canonical_1993} for equivalent statements.
\end{rem}
\begin{rem}
    The triangular structure of the nilpotent approximation~\eqref{eq:triangularShape} is related to chained systems~\cite{MurrSast93}, where the first $n_v$ states form an integrator chain as well. 
    However, triangularity by itself does not imply the existence of a chained-form representation~\cite{MurrSast93}.
    The present analysis does not require a transformation into chained form, since the stability result is based on homogeneity  of the nilpotent approximation.
    Conversely, chained-form systems possess the required triangular structure and therefore fall within the class of systems covered by the proposed MPC approach. For a different approach to the control of chained-form nonholonomic systems, see~\cite{Defoort16}.
\end{rem}
\begin{rem}
The nilpotent approximation is called the canonical nilpotent approximation in~\cite{bellaiche_canonical_1993},\cite{jean_control_2014}, because it has an inherent structure and it is unique in regard to the privileged coordinates. The privileged coordinates may not be unique to a system, as multiple sets of privileged coordinates with respective canonical nilpotent approximations may exist, compare~\cite{bellaiche_canonical_1993} or~\cite{SklyarIgnatovich07}. 
\end{rem}
\begin{example}
    To obtain the nilpotent approximation of the articulated vehicle, first, the transformed kinematic system~\eqref{equ:kin_artVeh_z} is expanded into its Taylor series, 
    and the weighted degrees of the resulting monomials are computed.    
    This yields
    \begin{align*}
    \label{equ:trailer_artVeh}
    \dot{\bm{z}} &= 
    \begin{bmatrix} 
        \bm{z}^{\bm{0}} - \frac{1}{8}\bm{z}^{[0 \, 2 \, 0 \, 0]} + \cdots \\   
        0 \\
        - \bm{z}^{[0 \, 1 \, 0 \, 0]} - \frac{1}{12}\bm{z}^{[0 \, 3 \, 0 \, 0]} + \cdots \\
        - \bm{z}^{[0 \, 0 \, 1 \, 0]} - \frac{1}{8}\bm{z}^{[0 \, 3 \, 0 \, 0]} + \cdots 
    \end{bmatrix}
    v_{1} + 
    \begin{bmatrix} 
        0 \\   
        \bm{z}^{\bm{0}} \\
        - \frac{1}{4} \bm{z}^{[0 \, 2 \, 0 \, 0]}\\
        - \frac{1}{4} \bm{z}^{[0 \, 2 \, 0 \, 0]}
    \end{bmatrix}
    v_{2}
    \nonumber\\
    &= 
    \begin{bmatrix} 
        Z_{1,1}^{[-1]} + Z_{1,1}^{[1]}+ \cdots \\   
        0 \\
        Z_{3,1}^{[-1]} + Z_{3,1}^{[1]}+ \cdots \\
        Z_{4,1}^{[-1]} + Z_{4,1}^{[0]}+ \cdots
    \end{bmatrix}
    v_{1} + 
    \begin{bmatrix} 
        0 \\   
        Z_{2,2}^{[-1]} \\
        Z_{3,2}^{[0]} \\
        Z_{4,2}^{[-1]}
    \end{bmatrix}v_{2}.
    \end{align*}
 %
 Truncating the Taylor series by keeping only the monomials $~Z_{i,j}^{[-1]}$, $i \in \mathbb{Z}_{1:4}$, $j \in \mathbb{Z}_{1:2}$, that have a nonholonomic order of~$-1$ leads to the nilpotent approximation
    \begin{equation}
    \label{equ:artVeh_approx}
    \dot{\bm{z}} = \bar{\bm{f}}(\bm{z},\bm{u})=
    \begin{bmatrix} 
        1 \\   
        0 \\
        - z_2 \\
        - z_3
    \end{bmatrix}
    v_{1} + 
    \begin{bmatrix} 
        0 \\   
        1 \\
        0 \\
        - \frac{1}{4} z_2^2
    \end{bmatrix}
    v_{2}.
    \end{equation}
\end{example}
\begin{example}
    The homogeneity condition~\eqref{eq:homogeneity_condition_definition} for the  nilpotent approximation~\eqref{equ:artVeh_approx} of the articulated vehicle is given by
    \begin{align}
    \label{eq:artVeh{eq:artVeh_homogeneousCond}_homogeneousCond}  \bar{\bm{f}}(\bm{\Lambda}_\alpha\bm{z},&\bm{\Delta}_{\alpha}\bm{v})
    = \begin{bmatrix}
        \alpha^{s_1}v_1 \\
        \alpha^{s_2}v_2 \\
        - \alpha^{r_2+s_1}z_2 v_1 \\
        - \alpha^{r_3+s_1}z_3 v_1 - \frac{1}{4} \alpha^{2 r_2+s_2}z_2^2 v_2 \\
    \end{bmatrix}\nonumber
    \\
    {}={}&\alpha^{\tau}\bm{\Lambda}_{\alpha} \bar{\bm{f}}(\bm{z},\bm{v})
    = \alpha^{\tau}
    \begin{bmatrix}
        \alpha^{r_1}v_1 \\
        \alpha^{r_2}v_2 \\
        - \alpha^{r_3}z_2 v_1 \\
        - \alpha^{r_4} \left( z_3 v_1 + \frac{1}{4} z_2^2 v_2 \right) \\
    \end{bmatrix}.
    \end{align}
    For the choice of parameters $\bm{r} = \w =[1 \ 1 \ 2 \ 3]$, $\bm{s} = \bm{1}_{2}$, and $\tau=0$, this condition is satisfied for all $(\bm{z},\bm{v})\in\R^{4}\times\R^{2}$ and all $\alpha\in\R_{\geq 0}$. Hence, the approximation is $(\bm{w},\bm{1}_2,0)$-homogeneous.
\end{example}
%
%
%
%
%
\onecolumn{\bibliography{literature}}
\bibliographystyle{ieeetr}

\end{document}